\newtheorem{theorem}{Theorem}[section]
\newtheorem{lemma}[theorem]{Lemma}
\newtheorem{proposition}[theorem]{Proposition}
\newtheorem{corollary}[theorem]{Corollary}
\numberwithin{equation}{section}
\theoremstyle{definition}
\newtheorem{definition}{Definition}
\newtheorem*{notation}{Notation}
\newtheorem{example}{Example}
\theoremstyle{remark}
\newtheorem{remark}{Remark}
\newcommand{\ZZ}{\mathbb{Z}}
\newcommand{\PP}{\nu}
\newcommand{\E}{\nu}
\newcommand{\id}{\mathds{1}}
\newcommand{\NN}{\mathbb{N}}
\newcommand{\RR}{\mathbb{R}}
\renewcommand{\d}{\mathrm{d}}
\renewcommand{\c}[1]{\ensuremath{\mathcal{#1}}}
\newcommand{\Group}{{\ensuremath{\mathbb{G}}}}
\newcommand{\Green}{\ensuremath{\mathbf{G}}}
\newcommand{\J}{J}
\newcommand{\n}{\mathbf{n}}
\newcommand{\exact}{\star}
\newcommand{\coclosed}{\Diamond}
\newcommand{\Hcycle}{\ensuremath{H_{\star}}}
\newcommand{\Hstar}{\ensuremath{H_{\Diamond}} }
\title{On the duality between height functions and continuous spin models}
\author{Diederik van Engelenburg \and Marcin Lis}
\date{\today}
\begin{document}
\maketitle 

\begin{abstract}
We revisit the classical phenomenon of duality between random integer-valued height functions with positive definite potentials and abelian spin models with O(2) symmetry. We use it to derive 
new results in quite high generality including: a universal upper bound on the variance of the height function in terms of the Green's function (a GFF bound) which among others implies localisation on transient graphs;
monotonicity of said variance with respect to a natural temperature parameter; the fact that delocalisation of the height function implies a BKT phase transition in planar models; and
also delocalisation itself for height functions on periodic ``almost'' planar graphs. 
\end{abstract}


\section{Introduction}
The phenomenon of duality in statistical mechanics goes back to the famous work of Kramers and Wannier who discovered an exact identity 
between the partition functions of the Ising model on a finite planar graph and an Ising model (at a different temperature) on its dual graph~\cite{KW}.
They used it to identify the {self-dual} temperature (that stays invariant under the duality transfomation) as the point of phase transition in the model on the square lattice (that is itself a self-dual graph). 
In an extended version of this correspondence, spin correlation functions are mapped to correlators of dual disorder variables introduced by Kadanoff and Ceva~\cite{KC}. 
This construction has been very fruitful in the study of the Ising model. A notable example are the works of Smirnov~\cite{smirnov}, and Chelkak and Smirnov~\cite{CheSmi}, who derived scaling limits of certain variants of order-disorder correlations (the so called fermionic observables), providing the first proofs of conformal invariance of the critical Ising model.

It is by now classical that analogous duality transformations exist for more general spin models 
whose state space is a locally compact abelian group \cite{WuWa, FroSpe, Savit} (we refer to~\cite{dubedat2011topics} for an introductory account). 
In such a setting Fourier transforms can be used to map one model with values in a group $\mathbb G$ to another model with values in the Pontryagin-dual group $\widehat {\mathbb G}$. For example, for $\mathbb G= {\mathbb Z/q \mathbb Z}$, $ q\in \mathbb N$,
duality was a crucial tool in the study of the planar $q$-state Potts model, the associated random cluster model, and the Ashkin--Teller model (see e.g.~\cite{BefDum,PfiVel,GlaPel,Lis19,Lis21,ADG}). These groups are self-dual in the sense that $\mathbb G\cong \widehat{\mathbb G}$, 
and moreover the models are dual to the same model on the dual graph, but with possibly different coupling constants. Another famous self-dual example is $\mathbb G=\mathbb R$ together with the discrete Gaussian free field, where duality exchanges electric and magnetic operators of the field (see e.g.~\cite{dubedat2011topics}).

In this article we go beyond the self-dual domain and consider the mutually dual but distinct groups of the integers ${\mathbb Z}$ and the circle~$\mathbb S$. 
This results in two very different objects facing each other on the opposite sides of the duality relation: one is a discrete \emph{random height function} with an unbounded set of values, and the other is a \emph{continuous spin model} with spins in the circle.
Duality can be then used to transfer probabilistic information from one side to the other.
 A landmark application of this relation appeared in the work of Fr\"ohlich and Spencer~\cite{FroSpe} who rigorously established the Berezinskii--Kosterlitz--Thouless (BKT) phase transition in the classical XY model on the square lattice (see Section~\ref{sec:bkt} for more background). They first showed delocalisation of the associated height function, and then used duality to conclude that spin correlations decay at most polynomially fast. New proofs of the latter implication appeared recently in~\cite{vEnLis,AHPS} which together with a novel approach to delocalisation introduced by Lammers~\cite{Lammers} yields alternative proofs of the BKT transition. All three proofs of~\cite{FroSpe,vEnLis,AHPS} use duality ``in the same direction'' in that they study spin correlations via disorder correlations in the dual height function. This leads to technical complications as disorders are nonocal functions of the height field.
 In~\cite{vEnLis,AHPS} these issues were taken care of by considering different graphical representations of the models. Here we argue that following duality ``in the opposite direction''
 leads to an even more concise proof (that only uses duality itself) of the implication that delocalisation of the height function implies the BKT transition in the spin model.  
Indeed, when the disorders appear on the spin model side, one can ``localise'' them by simply using the Taylor expansion to the second order, which is clearly impossible when disorders 
come as discrete excitations of the heights. 

Duality is an exact correspondence, and hence one expects that the critical point of the localisation-delocalisation phase transition is dual to the BKT critical point.
This was recently proved for the XY and the Villain model by Lammers~\cite{Lam23}. Here we also provide a result in the same direction for a larger class of models that includes the XY model. 
To be more precise, we establish an equivalence between the delocalisation of the height function and the divergence of a certain series (a type of susceptibility) of correlation functions in the spin model.

 Another contribution of this paper is a universal upper bound on the variance of the height function in terms of the variance of the discrete GFF. This holds true for all height function models with positive definite potentials, and moreover irrespectively of the graph being planar or not. 
There are two main applications. In the planar case, e.g., on $\mathbb Z^2$, this leads to a conjecturally optimal (up to a constant) logarithmic in the size of the system upper bound 
when the height function is delocalised. On the other hand, it shows that on transient graphs, e.g. on $\mathbb Z^d$, $d\geq 3$, the variance is always uniformly bounded and the height function is localised.

For a special class of potentials, we also establish monotonicity of the variance of the height function in a natural temperature parameter. 
As far as we know, the only available result of this type is for the integer-valued GFF \cite{KP}. 
We achieve {this} by transporting, through duality, the (appropriately generalised) Ginibre inequalities. 
One consequence is a direct proof of the fact that for the XY height function 
there is only one point of phase transition from a localised to a delocalised regime. 
A (more involved) proof of this fact was first given in~\cite{Lam23}.
Together with the dichotomy of Lammers \cite{Lammers22}, this also shows that for the XY height function on $\ZZ^2$, the transition is sharp.
Another application is that for two-dimensional Euclidean lattices with non nearest neighbour interactions, 
the height function undergoes a localisation-delocalisatoin phase transition.
  
We note that we only consider height functions with positive definite potentials, i.e., those that have well defined dual spin models, and vice versa.
The article is organised as follows:
 \begin{itemize}
 \item In Section~\ref{sec:gendual} we recall the notion of duality, and state in Lemma~\ref{L: covariance} its consequence for the covariances of the gradient of the height function and 
 and gradient of the spin model.
 This is the stepping stone to the remaining results in this article.
 \item In Section~\ref{sec:upperbound} we establish an upper bound on the variance of the height function in terms of the Green's function of the underlying simple random walk.
 The bound is valid on any, not necessarily planar graph, and in particular implies localisation of the height function on graphs on which simple random walk is transient.
 \item In Section~\ref{sec:bkt} in Theorem~\ref{thm:BKT} we give a direct proof of the fact that in two dimensions, delocalisation of the height function implies a BKT phase transition in the spin model in the sense that certain spin correlation functions are not summable. In Corollary~\ref{C: deloc BKT} using classical correlation inequalities we translate this to
 an analogous statement for the standard two-point functions, recovering the main result of~\cite{vEnLis}. Finally, for a subclass of spin models (that includes the classical XY model)
 we show that the above mentioned implication is actually an equivalence.
 \item In Section~\ref{sec:monot} in Theorem \ref{T: var_increasing_height} we show that for a certain class of height functions, the variance is increasing in the inverse temperature. 
 	We use this to prove that a phase transition occurs for these random height functions, when the underlying graph is planar or ``almost planar''. 
 \item In Section~\ref{sec:projection}, using only duality, we show that a certain (non-local) observable of a classical spin model has, up to multiplicative constants, the covariance of the discrete Gaussian free fiel. 
 	Remarkably, this holds for all graphs and does not depend on the temperature.
\item In Section~\ref{sec:CLT} we show a central limit theorem in the planar spin model that holds irrespectively of the temperature.
  \item In Appendix~\ref{ap:duality} we recall and give concise proofs of the main results needed for duality.
 \item In Appendix~\ref{ap:Gin} we extend the Ginibre inequalities to the setting that we need in Section~\ref{sec:monot}.
 \item In Appendix~\ref{ap:RP} we review the notion of reflection positivity that we use in Section~\ref{sec:bkt}.
  \item In Appendix~\ref{ap:div} we review the certain aspects of positive definite functions.
 \end{itemize}

 \subsection*{Acknowledgements} We thank Christophe Garban for useful remarks on an earlier version of the manuscript.
 The research of DvE was funded by the FWF (Austrian Science Fund) grant P33083: “Scaling limits in random conformal geometry”, and the research of ML
was supported by the FWF grant P36298-N: ``Spins, loops and fields''.
 

\section{General duality}\label{sec:gendual}

\subsection{Discrete calculus}
We first give a basic background on discrete calculus on graphs, 
staying close to the language of \cite{LyPer}. 
Let $G = (V, E)$ be a locally finite graph and let $\Group$ be a group (we will consider $\Group= \RR, \ZZ$ with addition and $\Group= \mathbb{S}:=\{z\in \mathbb C: |z|=1\}$ with multiplication). 
To keep the exposition homogenous, we will use the additive notation for all considered groups.

A \emph{1-form} $\omega$ taking values in $\Group$ is an antisymmetric function defined on the directed edges $\vec E$ of $G$, 
i.e., such that $\omega_{vv'}= -\omega_{v'v}$, where $vv'$ denotes the directed edge $(v,v')$. The set of $1$-forms will be denoted by $\Omega^1(\Group) = \Omega^1( G,\Group)$, and the set $\mathbb G^V$ by $\Omega^0(\Group) = \Omega^0(G, \Group)$. 
We will identify the space of 1-forms with $\mathbb \Group^E$ by fixing, once and for all, one of the two orientations for each edge in~$E$. 
Define the \emph{boundary} operator $\d^*: \Omega^1(\Group) \to \Omega^0(\Group)$ by 
\begin{align*}
	\d^*\omega_x = \sum_{y\sim x} \omega_{yx},
\end{align*}
where $y\sim x$ indicates that $y$ and $x$ are adjacent in $G$, and the \emph{co-boundary} operator $\d: \Omega^0(\Group) \to \Omega^1(\Group)$ by
\[	
\d f_{xy} = f_y - f_x.
\]
Note that $\d^*$ and $\d$ are homomorphisms between groups $\mathbb G^E$ and $\mathbb G^V$, and hence we can define groups
\[
\Hcycle(\Group) = \Hcycle(G,\Group):= \mathrm{Im}(\d) \cong \Group^V / \ker(\d) \quad \text{ and } \quad \Hstar(\Group) =  \Hstar(G,\Group) :=\ker(\d^*).
\]
For $\Group=\mathbb S$, we will write $d \J$ to be the Haar probability measure on the induced (compact) groups $\Hcycle(\mathbb S)$ and $\Hstar(\mathbb S)$. 
If $\Group$ is only locally compact, the Haar measure is defined up to a multiplicative constant and we fix some normalisation. 
For a more concrete definition, we refer to Appendix \ref{ap:duality}. 
We make the convention that the space over which we integrate determines the measure.

\begin{notation}
In what follows we will use the letters $\epsilon, \omega$ (resp.\ $f,g,\tau$) to denote deterministic elements of $\Omega^1(\Group)$ (resp.\ $\Omega^0(\Group)$) when $\Group=\mathbb R$ or when $\mathbb G$ is not specified. 
We will write $\n$ and $h$ for (mostly random) elements of $\Omega^1(\mathbb Z)$ and $ \Omega^0(\mathbb Z)$ respectively, and $J$ and $\theta$ for (mostly random)
elements of $\Omega^1(\mathbb S)$ and $ \Omega^0(\mathbb S)$ respectively.

We will also often abuse notation in the following sense: through the identification $\exp( i\theta)\leftrightarrow\theta$, we have $\mathbb{S} \cong (-\pi, \pi]$, and
we will view $\J \in \Omega^1(\mathbb{S})$ as belonging to $\Omega^{1}(\RR)$. On the other hand, by considering real numbers modulo $2\pi$, 
we will map $\Omega^1(\RR)$ to $\Omega^1(\mathbb{S}^1)$.
One has to be careful when going from one space to the other: the embedding does not map $H_{\#} (\mathbb{S})$ to $H_{\#}(\RR)$, 
because for example a $1$-form $\omega \in \Omega^1(\mathbb{S})$ which satisfies $\d^* \omega= 0$ in $\mathbb{S}$, 
only satisfies $\d^*\omega = 0$ modulo $2\pi$ when viewed as a $1$-form in $\Omega^1(\mathbb{R})$. We will also think of $H_{\#}(\mathbb Z)$
as a subset of  $H_{\#}(\mathbb R)$ in the obvious way.
\end{notation}

\subsection{Spin models and random height functions}
In this section we consider a finite graph $G = (V,E)$. 
We will study random \emph{spin} and \emph{height} $1$-forms 
taking values in the spaces $H_{\#}(\mathbb S)$ and $H_{\#}(\mathbb Z)$ respectively for $\# \in \{\coclosed, \exact\}$. 

\begin{definition}[Height function and spin potentials] \label{def:potential}
Let $\mathcal V: \mathbb Z\to \mathbb R \cup \{ +\infty\}$ be symmetric, i.e., $\mathcal V (n)=\mathcal V(-n)$, such that 
\begin{align} \label{eq:Vsum}
\sum_{n\in \mathbb Z}n^2 \exp(-\mathcal V(n))<\infty,
\end{align}
and moreover such that $\exp(-\mathcal V)$
is \emph{positive definite}: for all $\alpha\in \mathbb R$, 
\begin{align}\label{eq:posdef}
	w(\alpha):=\exp(-\mathcal V(0))+\sum_{n=1}^{\infty}\exp(-\mathcal V(n))2\cos( n\alpha) > 0.
\end{align}
We call $\mathcal V$ the \emph{height function potential}, and $\mathcal U (\alpha) := -\log w(\alpha)$ the \emph{spin potential}.
\end{definition}
We will always assume that the considered potentials satisfy the conditions of Definition~\ref{def:potential}.
Note that condition~\eqref{eq:Vsum} implies that the series in \eqref{eq:posdef} is absolutely summable, and moreover that $\omega$, as well as $\mathcal U$, is twice continuously differentiable in $\alpha$.
In general, we will say that a function is positive definite if its Fourier transform is a nonnegative function.
This is not the classical definition of positive definiteness, but it is equivalent to it by Bochner's theorem.
\begin{example} \label{exple:potentials}The following potentials satisfy the conditions of Definition~\ref{def:potential}:
\begin{itemize}
\item  $\mathcal V(n)= -\log(I_{n}(\beta))$, where $I_n(\beta)$ is the modified Bessel function of the first kind, and $\c{U}(t) = -\beta \cos( t)$ for all $\beta>0$ is the potential of the \emph{classical XY model},
\item $\mathcal V(n)=\beta n^2$ for all $\beta >0$ is the potential of the \emph{integer-valued Gaussian free field} and the corresponding $\mathcal U$ defined through the series in~\eqref{eq:posdef} is the potential of the \emph{Villain spin model},
\item  $\mathcal V(n)=\beta \mathbf 1\{ n=\pm 1\}+\infty  \mathbf 1\{ |n|>1\}$ for $\exp(-\beta)<1/2$ is a model of random (nonuniform) \emph{Lipschitz functions}.
\item
	Any \emph{annealed Gaussian} potential $\c{V}$ meaning that there exists a finite Borel measure $\lambda$ on $[0, \infty)$ such that
\[
	e^{-\c{V}(n)} = \int_{[0, \infty)} e^{-\frac{\gamma}{2} n^2} \lambda(d \gamma)
\]
for all $n$. 
It satisfies Definition \ref{def:potential} because the function $n \mapsto \frac{\gamma}{2} n^2$ does and because by dominated convergence, we can exchange the integral and the summation in \eqref{eq:posdef}.  
This class includes the potentials $\c{V}(n) = \beta |n|^a$ for any $a \in (0, 2]$ (see~\cite{AHPS}). 
\end{itemize}
\end{example}

Let $\omega$ be as in Definition~\ref{def:potential}. Fix $\# \in \{\coclosed, \exact\}$, and consider a probability measure on \emph{spin $1$-forms} $J\in H_{\#}(\mathbb{S})$ defined by
\begin{align} \label{def:xy}
	d\mu_{\#}(\J)=d \mu_{G, \#}(\J) =\frac{1}{Z_{\#}} \Big(\prod_{e \in E}w(\J_e) \Big) d \J,
\end{align}
where $Z_{\#}$ is the \emph{partition function}, and $d \J$ denotes the Haar probability measure on the group $H_{\#}(\mathbb{S})$. 
For a 1-form $\epsilon \in \Omega^1(\RR)$, we define the \emph{twisted partition function}
\begin{align*}
	Z_{\#}(\epsilon) = \int_{H_{\#}(\mathbb{S})} \prod_{e\in  E} w  (\J_{e}+\epsilon_e ) d \J, 
\end{align*}
and note that $Z_{\#}(0)=Z_{\#}$.
We also define a probability measure on \emph{height $1$-forms} $\n \in H_{\#}(\ZZ)$ by
\begin{align} \label{eq:defh}
	\nu_{\#}(\n) =\nu_{G, \#} (\n) \propto \exp\Big(-\sum_{vv'\in E}\mathcal V(\n_{vv'})\Big).
\end{align}
Note that this is well defined as the normalisation constant is finite by assumption~\eqref{eq:Vsum}. 


For $f,g\in  \Omega^1(\mathbb R)$ and $\epsilon, \omega\in \Omega^1(\mathbb R)$, we will write 
\[
(f,g)_{\Omega^0}= \sum_{v\in V} f_vg_v, \qquad \text{and} \qquad (\epsilon, \omega)_{\Omega^1} = \frac{1}{2}\sum_{\vec{e} \in \vec{E}} \epsilon_{\vec{e}} \:\omega_{\vec{e}} =\sum_{{e} \in {E}} \epsilon_{{e}} \:\omega_{{e}} 
\]
for the standard inner products. We will usually drop the subscripts and simply write $(\cdot,\cdot)$ in case the space is clear from the context.

The central result that we will use is the following duality formula. Even though it is classical (see e.g.\ Appendix A in~\cite{FroSpe}), we will provide its derivation in Appendix~\ref{ap:duality}.
\begin{lemma}[Fourier--Pontryagin duality] \label{lem:duality}
	Let $\# \in \{\coclosed, \exact\}$ and let $-\#$ denote the other element of $ \{\coclosed, \exact\}$. 
	Then for any $\epsilon \in \Omega^1(\RR)$, we have
	\[
	\nu_{-\#} [\exp({ i (\n, \epsilon)}) ] = \frac{Z_{\#}(\epsilon)}{Z_{\#}}=\mu_{\#}\Big[\prod_{e\in E} \frac{w  (\J_e+\epsilon_e )}{w  (\J_{e})}\Big].
	\]
\end{lemma}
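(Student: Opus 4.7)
The second equality in the lemma is a tautology: dividing $Z_\#(\epsilon)$ by $Z_\# = Z_\#(0)$ turns the integral against the Haar measure $d\J$ into the expectation under $\mu_\#$, and the factor $\prod_e w(\J_e)$ cancels in the ratio $w(\J_e + \epsilon_e)/w(\J_e)$. The content of the lemma is therefore the first equality, and my plan is to compute $Z_\#(\epsilon)$ explicitly starting from its definition on the spin side.

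The starting point is the Fourier expansion
\[
w(\alpha) \;=\; \sum_{n \in \mathbb Z} e^{-\mathcal V(n)} e^{i n \alpha},
\]
which follows from the symmetry of $\mathcal V$ and is absolutely convergent by \eqref{eq:Vsum}. Substituting this into each factor $w(\J_e + \epsilon_e)$ in the definition of $Z_\#(\epsilon)$, expanding the product over edges, and exchanging sum with integral (Fubini being justified by the absolute summability just noted), I arrive at
\[
Z_\#(\epsilon) \;=\; \sum_{\n \in \mathbb Z^E} \Bigl( \prod_{e \in E} e^{-\mathcal V(\n_e)} \Bigr) e^{i(\n, \epsilon)} \int_{H_\#(\mathbb S)} e^{i(\n, \J)} \, d\J.
\]
The inner integral is that of a character of the compact abelian group $H_\#(\mathbb S)$, so by orthogonality of characters it equals $1$ if the character is trivial on $H_\#(\mathbb S)$ and $0$ otherwise.

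The hard part --- and presumably the reason a separate appendix is devoted to duality --- is identifying those $\n \in \mathbb Z^E$ for which $\J \mapsto e^{i(\n,\J)}$ is trivial on $H_\#(\mathbb S)$. The claim is that these are precisely the elements of $H_{-\#}(\mathbb Z)$, i.e., that Pontryagin duality between $\mathbb S^E$ and $\mathbb Z^E$ identifies the annihilator of the closed subgroup $H_\#(\mathbb S)$ with $H_{-\#}(\mathbb Z)$. For $\# = \exact$ this direction is immediate from adjointness: writing $\J = \d\theta$, we have $(\n, \d\theta) = (\d^*\n, \theta)$, and triviality of the corresponding character of $\mathbb S^V$ for every $\theta$ forces $\d^*\n = 0$ in $\mathbb Z$. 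The case $\# = \coclosed$ is the subtler one, because one must upgrade the real statement $(\ker\d^*_{\mathbb R})^{\perp} = \mathrm{Im}(\d_{\mathbb R})$ to an integer statement: any integer $1$-form whose character annihilates $\ker(\d^*_{\mathbb S})$ must be of the form $\d h$ for some $h \in \mathbb Z^V$. I would establish this by combining a path-integration argument along a spanning tree (producing a real primitive that takes integer values on vertices) with an integrality check on a basis of independent cycles of the graph.

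Once this identification is in place, the $\mathbb Z^E$-sum collapses to a sum over $\n \in H_{-\#}(\mathbb Z)$, and dividing by $Z_\# = Z_\#(0)$ gives
\[
\frac{Z_\#(\epsilon)}{Z_\#} \;=\; \frac{\sum_{\n \in H_{-\#}(\mathbb Z)} \bigl(\prod_e e^{-\mathcal V(\n_e)}\bigr) e^{i(\n, \epsilon)}}{\sum_{\n \in H_{-\#}(\mathbb Z)} \prod_e e^{-\mathcal V(\n_e)}} \;=\; \nu_{-\#}\bigl[ e^{i(\n, \epsilon)} \bigr]
\]
by the definition \eqref{eq:defh} of $\nu_{-\#}$, completing the proof plan.
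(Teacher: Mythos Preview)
Your argument is correct. For $\#=\exact$ it coincides with the paper's Case~I. For $\#=\coclosed$, however, you take a genuinely different route: the paper does \emph{not} expand $Z_{\coclosed}(\epsilon)$ and identify the annihilator of $H_{\coclosed}(\mathbb S)$, but instead starts on the height-function side, writes each $e^{-\mathcal V(\d h_e)}$ by Fourier inversion as $\int_{\mathbb S} e^{i\d h_e\theta_e}w(\theta_e)\,d\theta_e$, and then performs an iterated Dirichlet-kernel summation over the $h_v$ along a spanning tree, invoking uniform convergence of Fourier series (their Lemma~\ref{lem:fform}) at every step to justify the successive limits. Your approach treats the two cases symmetrically via the single mechanism of annihilator identification, and avoids the delicate tower of limits; the price is the small lemma you sketch. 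That sketch is sound: for any $\omega\in H_{\coclosed}(\mathbb R)$ and $t\in\mathbb R$ the reduction of $t\omega$ lies in $H_{\coclosed}(\mathbb S)$, so the annihilator hypothesis forces $(\n,\omega)=0$; the real Hodge decomposition then gives $\n=\d f$ with $f$ real, and integrality of $\n$ on edges makes $f$ integer-valued up to an additive constant. Equivalently, your spanning-tree primitive is automatically integer-valued, and the ``check on cycles'' is the vanishing of $(\n,\omega_C)$ for each fundamental cycle $C$, which is exactly what the annihilator condition provides.
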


Clearly there are two intertwined random objects in the statement of Lemma~\ref{lem:duality}: the height and spin $1$-forms $\n$ and $J$ respectively. We will mostly apply the duality to analyse one of these two models
whose values are the \emph{exact} $1$-forms $H_\exact(\mathbb G)$, since then for each $\omega \in H_\exact(\mathbb G)$, there exists a unique $\tau \in \mathbb G^V$ such that
\[
\d\tau = \omega \qquad \text{ and } \qquad \tau_\partial =0,
\]
 where $\partial\in V$ is a fixed \emph{boundary} vertex of $G$, and $0$ is the identity element of $\mathbb G$. The random configuration $\tau$ is then distributed as a classical spin system with spins assigned to vertices with $0$ boundary conditions at $\partial$, and that interact through edges.

\begin{remark} 
In two dimensions there is a special form of duality where $\Hstar$ on the planar graph $G$ can be seen as $\Hcycle$ on the \emph{planar dual} graph $G^*$ by simply rotating all directed edges by $\pi/2$ to the left.
	Therefore if $\omega$ is a $1$-form such that $\d^*\omega = 0$, 
	there exists a function $\tau$ on the vertices of the {dual graph} $G^*$ (faces of $G$) which has $\omega$ as its gradient, i.e. 
	\begin{align*}
		\omega_{vv'} = \tau_u-\tau_{u'} = \d \tau_{uu'},
	\end{align*}
	where $u,u'$ are the two faces adjacent to $vv'$ from the right and left respectively. In this case, both models in Lemma~\ref{lem:duality} can be seen as classical spin and height function models.
\end{remark}

\begin{remark}
	As mentioned in the introduction, the Fourier--Pontryagin duality is usually applied in the opposite direction to Lemma~\ref{lem:duality}, i.e., to compute the characteristic function of the spin model rather than the height function. On the height function side this results in expectations of nonlocal observables (disorders) which are in general difficult to analyse.
	In our case however the disorder appears on the spin model side, and can be removed from the picture by taking derivatives at zero of the characteristic function.
	This is the main point of view which allows to obtain most of the results in this article using comparatively elementary arguments.
\end{remark}

One of the main tools in this article is the following identity. Even though it is a rather direct consequence of duality, we were unable to find this formulation in the literature.

\begin{lemma}[Covariance duality]\label{L: covariance} 
	Let $\# \in \{\coclosed, \exact\}$ and let $-\#$ be the other element of $ \{\coclosed, \exact\}$. For any $\epsilon, \omega \in \Omega^1(\RR)$, we have
	\[
 \E_{\#}\big[(\n, \epsilon)(\n, \omega)\big] + \mu_{-\#} \big[(\c{U}'(\J), \epsilon)(\c{U}'(\J), \omega)\big] = \sum_{e \in E} \mu_{-\#}[\c{U}''(\J_e)] \epsilon_e\omega_e.
	\]
\end{lemma}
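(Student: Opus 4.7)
The plan is to obtain the identity by taking the second mixed derivative at zero of the characteristic-function duality in Lemma~\ref{lem:duality}. Applying that lemma with the roles of $\#$ and $-\#$ swapped gives, for every $\eta\in\Omega^1(\RR)$,
\[
\nu_{\#}[\exp(i(\n,\eta))] = \mu_{-\#}\Big[\prod_{e\in E}\frac{w(\J_e+\eta_e)}{w(\J_e)}\Big].
\]
I would substitute $\eta=s\epsilon+t\omega$ with $s,t\in\RR$ and compute $\partial_s\partial_t$ at $s=t=0$ on both sides. Differentiation under the expectation is legitimate on the LHS because $\n$ has finite second moments under $\nu_{\#}$ thanks to~\eqref{eq:Vsum}, and on the RHS because $w$ is twice continuously differentiable with bounded ratios $w(\J_e+\eta_e)/w(\J_e)$ in a neighbourhood of $0$ (the circle being compact and $w$ strictly positive).

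The LHS computation is immediate:
\[
\partial_s\partial_t\big|_{0}\nu_{\#}[\exp(i(\n,s\epsilon+t\omega))]=-\nu_{\#}[(\n,\epsilon)(\n,\omega)].
\]
For the RHS, using $\mathcal U=-\log w$ so that $(\log w)'=-\mathcal U'$, the logarithmic derivative of the product in direction $\epsilon$ is $-\sum_{e}\epsilon_e\mathcal U'(\J_e+\eta_e)$. Hence
\[
\partial_s\Big|_{s=0}\prod_{e}\frac{w(\J_e+s\epsilon_e+t\omega_e)}{w(\J_e)}= -\Big(\prod_{e}\frac{w(\J_e+t\omega_e)}{w(\J_e)}\Big)(\mathcal U'(\J+t\omega),\epsilon).
\]
Differentiating once more in $t$ at $t=0$ via the product rule produces two contributions: differentiating the product gives $-(\mathcal U'(\J),\omega)$ times $-(\mathcal U'(\J),\epsilon)$, and differentiating the inner linear form gives $-\sum_e\epsilon_e\omega_e\mathcal U''(\J_e)$. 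Collecting signs yields
\[
\partial_s\partial_t\big|_{0}\text{RHS}=\mu_{-\#}\big[(\mathcal U'(\J),\epsilon)(\mathcal U'(\J),\omega)\big]-\sum_{e\in E}\mu_{-\#}[\mathcal U''(\J_e)]\,\epsilon_e\omega_e.
\]
Equating the two sides and rearranging gives the claimed identity.

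The only genuinely subtle point is justifying the exchange of differentiation and expectation, in particular on the height-function side where one must check dominated convergence for the difference quotient of $\exp(i(\n,s\epsilon+t\omega))$; this is handled by the elementary bound $|e^{ia}-1-ia|\le a^2/2$ combined with the uniform second-moment bound on $(\n,\eta)$ coming from~\eqref{eq:Vsum}. Everything else is bookkeeping via the chain rule and the identity $(\log w)'=-\mathcal U'$, together with $\mathcal U''=(\log w)''\cdot(-1)$ applied pointwise in $\J_e$.
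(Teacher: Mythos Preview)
Your proof is correct and follows exactly the same route as the paper, which simply says to compute $\partial_s\partial_t\big|_{s=t=0}\nu_{\#}[\exp(i(\n,s\epsilon+t\omega))]$ by differentiating under the integral sign on the right-hand side of Lemma~\ref{lem:duality}. You have merely written out the chain-rule computation and the justification for exchanging differentiation and expectation in more detail than the paper does.
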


\begin{proof}
It is enough to compute 
\[
\frac\partial {\partial s} \frac\partial {\partial t}	\Big(\nu_{\#} [\exp({i (h,s \epsilon +t \omega)}) ] \Big) \Big|_{s =t =0}
\]
by differentiating under the sign of integration on the right-hand side of the formula from Lemma~\ref{lem:duality}.
\end{proof}

\begin{remark}
In the case of measures $\nu_{\exact}$ on true height functions $h$, the quantity $\E_{\exact}[(\n, \epsilon)(\n, \omega)]=\E_{\exact}[(h,\d^* \epsilon)(h,\d^* \omega)]$ explicitly depends only on $\d^*\epsilon$ and $\d^*\omega$, and hence the rest of the equation above does so implicitly.
\end{remark}

Choosing $\epsilon = \id_{xy} - \id_{yx}$ for some edge $xy$ and $\tau = \id_{uv} - \id_{vu}$ for another edge $uv$, as a corollary we immediately get the following identities:
\begin{align} \label{eq:pontwiseD1}
\E_{\#}[\n_{xy}^2] = \mu_{-\#}[ \mathcal U''(J_{xy})- \mathcal U'(J_{xy})^2]
\end{align}
and
\begin{align} \label{eq:pontwiseD2}
\E_{\#}[\n_{xy}\n_{uv}] = -\mu_{-\#}[\c{U}'(\J_{xy})\c{U}'(\J_{uv})].
\end{align}

\begin{remark}
	Note that Lemma~\ref{L: covariance} implies that the sum of the covariance matrices of two mutually dual edge fields is diagonal, i.e., equals the covariance matrix of (possibly inhomogeneous) white noise. This was known for the discrete Gaussian Free Field ($\mathbb G=\mathbb R$), see e.g.~\cite{dubedat2011topics, Aru15} and Remark~\ref{rem:gff}, in which case the independent sum of the mutually dual edge fields is a collection of independent normal random variables. 
	The continuum analogue for the GFF can be found in \cite{AKM}. 
\end{remark}

\begin{remark} 
		Arguments similar in spirit to Lemma \ref{L: covariance}, but in the context of the Villain model appear in \cite{GaSe}. 
\end{remark}

Having established the covariance duality formula in Lemma~\ref{L: covariance}, we will now discuss several of its rather direct consequences. 
Unless stated otherwise, we study the models on a finite graph $G = (V,E)$ with a prescribed boundary vertex $\partial \in V$.
We will write $ \Omega_0^0(\mathbb G)$ for the set of functions $f\in \Omega^0(\mathbb G)$ with $f_\partial =0$.

\section{Upper bound on the variance of the height function} \label{sec:upperbound}
In this section we consider random {exact} $1$-forms $\n \in \Hcycle(\mathbb Z) $ distributed according to~$\nu_\exact$. As mentioned before, for each such $1$-form $\n\in \Hcycle(\mathbb Z)$, there exists exactly one \emph{height function} $h\in \Omega_0^0(\mathbb Z)$ such that $ \d h =\n $.
Note that in the case of $\mathbb G=\mathbb R$, $\d$ and $\d^*$ are adjoint as linear operators, i.e., for all 
$f\in \Omega^0(\mathbb R)$ and $\omega\in\Omega^1(\mathbb R)$, we have
\begin{align} \label{eq:adjoint}
(f, \d^* \omega)_{\Omega^0} = (\d f,\omega)_{\Omega^1}.
\end{align}
Also note that the operator 
\[
\Delta:=\d^*\d: \Omega^0(\mathbb R) \to \Omega^0(\mathbb R)
\] 
is the \emph{graph Laplacian} on $G$, and it has a well defined inverse $\Delta^{-1}$ on $\Omega^0_0(\mathbb R)$. Moreover, as matrices,
\[
\Delta^{-1} =  \Green D^{-1},
\]
where $D=\textnormal{Diag}(\textnormal{deg}(v))_{v\in V\setminus \{ \partial\}}$ and $\Green$ is the Green's function of simple random walk on $G$ killed upon hitting $\partial$.

Let $f \in  \Omega^0_0(\mathbb R)$ and $\epsilon :=\d \Delta^{-1}f$ so that $\d^*\epsilon =f$. Discarding the explicitly nonnegative term $\mu_{\coclosed} [(\c{U}'(\J), \epsilon)^2]$ in Lemma~\ref{L: covariance} applied to $\epsilon=\omega$, and using \eqref{eq:adjoint} we get
\begin{align} \label{eq:upperbound}
	\nu_{ \exact}[(h, f)_{\Omega^0}^2] =\nu_{ \exact}[(\n, \epsilon)_{\Omega^1}^2] \leq \sum_{e \in  E} \epsilon_{e}^2 \big|\mu_{\coclosed}[\mathcal U''(\J_e) ]\big|  \leq C (\epsilon, \epsilon)_{\Omega^1},
\end{align}
where 
\[
C=\sup_{e\in E} |\mu_{\coclosed}[\mathcal U''(\J_e)]| \leq \sup_{J\in \mathbb S} |\mathcal U''(J)|<\infty.
\] 
On the other hand, by \eqref{eq:adjoint} again	$(\epsilon, \epsilon)_{\Omega^1}= (\d \Delta^{-1}f,\d \Delta^{-1}f)_{\Omega^1}=  ( \Delta^{-1}f,f)_{\Omega^0}$.

\begin{corollary}[GFF upper bound on variance]\label{cor:upper}
	For any $f\in \Omega^0_0(\mathbb R)$,
	\begin{align*}
		\nu_{\exact}[(h, f)^2] \leq  C  ( \Delta^{-1}f,f)_{\Omega^0}=C\sum_{v,v'\in V} f_vf_{v'}\frac{\Green(v, v')}{\deg(v')},
	\end{align*}
	where $C$ is as above.
\end{corollary}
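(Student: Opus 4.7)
The plan is to leverage the covariance duality identity in Lemma~\ref{L: covariance} with a carefully chosen pair of test $1$-forms, and then discard the spin-side contribution by positivity. This is essentially a one-step argument once the right test $1$-form is identified.

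First I would look for $\epsilon \in \Omega^1(\RR)$ that ``represents'' $f$ through the divergence, i.e., satisfies $\d^*\epsilon = f$. Since $\Delta = \d^*\d$ is invertible on $\Omega^0_0(\RR)$, the canonical choice is $\epsilon := \d \Delta^{-1} f$. With this choice, \eqref{eq:adjoint} gives
\begin{align*}
(h,f)_{\Omega^0} = (h,\d^*\epsilon)_{\Omega^0} = (\d h, \epsilon)_{\Omega^1} = (\n,\epsilon)_{\Omega^1},
\end{align*}
so the height-side quantity we want to bound is a gradient-level quantity, exactly the object controlled by Lemma~\ref{L: covariance}.

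Next I would apply Lemma~\ref{L: covariance} with $\# = \exact$ and $\omega = \epsilon$, yielding
\begin{align*}
\E_{\exact}[(\n,\epsilon)^2] + \mu_{\coclosed}\bigl[(\c{U}'(\J),\epsilon)^2\bigr] = \sum_{e \in E} \mu_{\coclosed}[\c{U}''(\J_e)]\, \epsilon_e^2.
\end{align*}
The decisive observation is that the spin-side term on the left is manifestly nonnegative, so it can be dropped without reversing the inequality. Bounding each edge coefficient by $C := \sup_{e\in E}|\mu_{\coclosed}[\c{U}''(\J_e)]| \leq \sup_{t \in \mathbb{S}}|\c{U}''(t)| < \infty$ gives $\E_{\exact}[(h,f)^2] \leq C(\epsilon,\epsilon)_{\Omega^1}$. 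A final application of \eqref{eq:adjoint} rewrites
\begin{align*}
(\epsilon,\epsilon)_{\Omega^1} = (\d\Delta^{-1}f,\, \d\Delta^{-1}f)_{\Omega^1} = (\Delta\Delta^{-1}f,\, \Delta^{-1}f)_{\Omega^0} = (f, \Delta^{-1}f)_{\Omega^0},
\end{align*}
and the explicit Green's function form follows from the identity $\Delta^{-1} = \Green D^{-1}$ recalled just above the statement.

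The only point requiring a brief sanity check is the finiteness of $C$: condition \eqref{eq:Vsum} in Definition~\ref{def:potential} guarantees $w$ is twice continuously differentiable, and \eqref{eq:posdef} guarantees $w>0$, so $\c{U}=-\log w$ is $C^2$ on the compact group $\mathbb{S}$ and hence has bounded second derivative. There is no serious obstacle in this proof; the entire content lies in choosing $\epsilon = \d \Delta^{-1} f$ and exploiting that the dual spin-side contribution in the covariance duality is a square that can simply be thrown away.
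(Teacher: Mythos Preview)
Your proposal is correct and follows exactly the same argument as the paper: choose $\epsilon=\d\Delta^{-1}f$, apply Lemma~\ref{L: covariance} with $\omega=\epsilon$, drop the nonnegative term $\mu_{\coclosed}[(\c{U}'(\J),\epsilon)^2]$, bound the diagonal sum by $C(\epsilon,\epsilon)_{\Omega^1}$, and rewrite $(\epsilon,\epsilon)_{\Omega^1}=(\Delta^{-1}f,f)_{\Omega^0}$ via~\eqref{eq:adjoint}. The only cosmetic difference is that the paper records these steps inline in~\eqref{eq:upperbound} rather than as a separate proof block.
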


\begin{remark} \label{rem:gff}
	One can also apply duality to the discrete Gaussian free field (GFF) (in this case both the primal and dual fields are real-valued as $\mathbb R$ is self-dual as a locally compact abelian group).
	The GFF is defined similarly to the integer-valued GFF with potential $\mathcal V(t)=t^2$ with the difference that the reference measure in~\eqref{eq:defh} is the Lebesgue measure on $\mathbb R$ and not the counting measure on $\mathbb Z$. 
	The model is self dual in that $\mathcal U(t)=\mathcal V(t)=t^2$, and in the analog of the corollary above actually get an equality since $( \epsilon, \mathcal U'(\J_e))=0$ since $\mathcal U'(\J_e) =2 \J_e \in H_{\coclosed}$, and $\d^* \epsilon \in H_{\exact}$ by definition. This agrees with the fact that the covariance of the GFF is given \emph{exactly} by the inverse Laplacian.
\end{remark}

\begin{remark} \label{rem:tightness}
	Consider an infinite countable graph $\Gamma=(\mathscr V, \mathscr E)$ and a sequence of increasing finite subgraphs exhausting $\Gamma$, i.e., $G_N \nearrow \Gamma$ as $N\to \infty$.
	If $f: \mathscr V\to \mathbb R$ has bounded support and mean zero, i.e., $\sum_{v\in \mathscr V} f_v=0$, where this sum is actually taken over a finite set of vertices, then we can find a 1-form $\epsilon$ on $\mathscr E$ with \emph{bounded} support such that $\d^* \epsilon=f$, and hence 
	\begin{align}\label{eq:local}
	\prod_{e\in E} \frac{w  (\J_{e}+\epsilon_e )}{w  (\J_{e})}=\prod_{e\in \textnormal{supp} (\epsilon)} \frac{w  (\J_{e}+\epsilon_e )}{w  (\J_{e})}
	\end{align}
	is a local bounded continuous function of $\J$ (in the product topology on $\mathbb S^{\mathscr E}$) whenever $\mathcal V$ and $w$ are as in Definition~\ref{def:potential}. Moreover, since $\mathbb S$ is compact metrizable so is $\mathbb S^{\mathscr E}$ with the product topology by Tychonoff's theorem, 
	and hence the edge spin models $\mu_{G_N,\#}$ always form a tight sequence of measures on $\mathbb S^{\mathscr E}$ as $N\to \infty$. 
	This in particular implies that $\mu_{G_N,\coclosed}$ converges weakly along a subsequence.
	Therefore Lemma~\ref{lem:duality} together with \eqref{eq:local} and the fact that
	\[
		\nu_{G_N,\exact} [\exp({i (\n, \epsilon)}) ]=\nu_{G_N,\exact} [\exp({ i (f, h)}) ]
	\]  
	for $N$ large enough so that $G_N$ contains $\textnormal{supp} (\epsilon)$, imply that the random height $1$-forms $\n$ under $\nu_{G_N,\exact}$, and hence also the \emph{differences} of the associated height function $h$, converge weakly along the same subsequence. 
	
One has to be careful as this is in general no longer true if $f$ does not have zero mean, e.g., $f=\delta_v$. Then $\epsilon$ with $\d^* \epsilon=f$ cannot be taken with bounded support 
(there always has to be an infinite path with nonzero values of $\epsilon$).
In this case tightness may fail when \emph{delocalisation} of the height function arises, i.e., $\nu_{G_N,\exact}[(h, f)^2]= \nu_{G_N,\exact}[h_v^2]\to \infty$ 
as $N\to \infty$ (e.g.\ if $\Gamma$ is planar, see Section~\ref{sec:bkt}).
\end{remark}

We also immediately deduce that delocalisation of the height function does not happen on transient graphs for potentials as in Definition~\ref{def:potential}.
We note that our result, despite its simple proof, seems new in this generality, and that such behaviour is expected for a larger class of potentials. 
We also note that the special case of the integer-valued GFF follows from a stronger estimate proved by Fr\"{o}hlich and Park~\cite{FroPar} (see also~\cite{KP}).
Some results in this direction related to the intever-valued GFF can also be found in \cite{AHPS}.

To state the result, we briefly recall the notion of Gibbs measures and {gradient} Gibbs measures (we do it for height functions only, and the definition for spin models used later in the article is completely analogous).
From now on we assume that $\Gamma = (\mathscr V, \mathscr E)$ is a locally finite, infinite graph. 
For a finite set $\Lambda \subset \mathscr V$ write $E(\Lambda)$ for the set of edges with at least one vertex in $\Lambda$.  
Let $\varphi: \Lambda^c \to \ZZ$ be a function and define the probability measure $\mu_{\Lambda}^\varphi$ 
supported on $h: \mathscr{V} \to \ZZ$ satisfying $h \mid_{\Lambda^c} = \varphi$ by
\[
	\nu_{\Lambda}^{\varphi}(h) \propto \exp \Big(-\sum_{e \in E(\Lambda)} \c{V}(\d h_e)\Big). 
\]
In other words, $\nu_{\Lambda}^\varphi$ is the measure $\PP_{\exact}$ from \eqref{eq:defh} with $\varphi$-boundary conditions outside $\Lambda$. 
A probability measure $\nu$ supported on height functions $h: \mathscr V \to \ZZ$ is called a \textit{Gibbs measure} (on $\Gamma$ with respect to the potential $\c{V}$) if it satisfies the Dobrushin--Lanford--Ruelle (DLR) relations:
for all finite sets $\Lambda \subset \mathscr{V}$, 
\[
	\nu_{\Lambda}(\cdot) = \int_{\mathbb{Z}^{\mathscr V}} \nu_{\Lambda}^{\varphi}(\cdot) d \nu(\varphi), 
\]
where $\nu_{\Lambda}$ denotes the restriction of $\nu$ to $\Lambda$.
If $\Gamma$ is a Cayley graph and the measure $\nu$ is invariant under shifts, it is called translation invariant. 
In terms of Gibbs measures, delocalisation corresponds to non-existence of translation invariant Gibbs measures. 

A \emph{gradient} Gibbs measure is a slight variation of the above, where we consider measures supported only on gradients. 
Fix this time a finite set of edges $\Lambda \subset \mathscr{E}$. Let $\omega$ be an exact $1$-form (thus taking value in $\Hcycle(\mathscr{E}, \ZZ)$). 
Define the probability measure $\mu_{\Lambda}^{\omega}$ supported on $1$-forms $\n \in \Hcycle(\mathscr{E}, \ZZ)$ satisfying $h \mid_{\Lambda^c} = \omega \mid_{\Lambda^c}$ as
\[
	\mu_{\Lambda}^{\omega}(\n) \propto \exp\Big(-\sum_{e \in \Lambda} \c{V}(\n_e)\Big). 
\]
A probability measure supported on $1$-forms $\n \in \Hcycle(\mathscr{E}, \ZZ)$ will be called a gradient Gibbs measure if it satisfies the analog of the DLR equation above
in this setup. 

\begin{theorem} \label{thm:extransinv}
	Let $\Gamma = (\mathscr V,\mathscr E)$ be a transient graph and $\c{V}$ a height function potential as in Definition~\ref{def:potential}. 
	Then, there exists an infinite volume Gibbs measure on $\Gamma$ with respect to $\mathcal V$. If $\Gamma$ is moreover an amenable Cayley graph, 
	there exist translation invariant Gibbs measures. 
\end{theorem}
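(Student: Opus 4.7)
My plan is to combine the GFF variance estimate of Corollary~\ref{cor:upper} with a standard weak-compactness argument on an exhaustion, and, in the amenable Cayley graph case, to symmetrise the resulting Gibbs measure by a Følner average. Fix a base vertex $\partial \in \mathscr V$ and an exhaustion $G_N = (V_N, E_N) \nearrow \Gamma$ by finite connected subgraphs containing $\partial$, and let $\nu_N := \nu_{G_N, \exact}$ denote the finite-volume measure on height functions $h : V_N \to \mathbb Z$ with $h_\partial = 0$, extended by $0$ on $\mathscr V \setminus V_N$ so that $\nu_N$ lives on $\mathbb Z^{\mathscr V}$. Corollary~\ref{cor:upper} applied with $f = \delta_v$ yields
\[
\nu_N[h_v^2] \le C\, \frac{G_N(v,v)}{\deg(v)},
\]
where $G_N(v,v)$ is the Green's function of simple random walk on $G_N$ killed at $\partial$. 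Rayleigh monotonicity and transience of $\Gamma$ give $G_N(v,v) \le G_\Gamma(v,v) < \infty$, so $\sup_N \nu_N[h_v^2] < \infty$ for every $v \in \mathscr V$.

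Chebyshev's inequality applied coordinate-wise, together with the countability of $\mathscr V$, then yields tightness of $(\nu_N)$ on $\mathbb Z^{\mathscr V}$ in the product topology. Extract a subsequential weak limit $\nu$. To verify the DLR equations, fix a finite $\Lambda \subset \mathscr V$ and a bounded local function $F$ measurable with respect to $h|_\Lambda$; for $N$ large enough that $\Lambda \cup \{\partial\} \subset V_N$ and $\partial\Lambda \subset V_N$, the measure $\nu_N$ already satisfies $\nu_N[F] = \nu_N[\nu_\Lambda^{h|_{\Lambda^c}}[F]]$. The integrand $h \mapsto \nu_\Lambda^{h|_{\Lambda^c}}[F]$ depends only on the finitely many coordinates $h|_{\partial \Lambda}$, is bounded by $\|F\|_\infty$, and is continuous in the product topology on $\mathbb Z^{\mathscr V}$ (a ratio of finite sums of $e^{-\mathcal V}$-factors, with strictly positive, continuous denominator by~\eqref{eq:Vsum}). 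Weak convergence therefore preserves both sides of the identity, yielding DLR for $\nu$ and the first assertion.

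Now suppose $\Gamma$ is an amenable Cayley graph with Følner sequence $(F_n)$. Vertex-transitivity makes the specification translation invariant, so every translate $\tau_g^* \nu := \nu \circ \tau_g^{-1}$ of the Gibbs measure $\nu$ constructed above is itself Gibbs for the same specification; convex combinations preserve this property, so the Cesàro average
\[
\bar\nu_n := \frac{1}{|F_n|} \sum_{g \in F_n} \tau_g^* \nu
\]
is again a Gibbs measure. Transitivity also transfers the uniform second moment bound to every vertex, so $(\bar\nu_n)$ is tight by the same diagonal argument as before. For any generator $h$ of the group one has $\tau_h^* \bar\nu_n = |F_n|^{-1} \sum_{g \in hF_n} \tau_g^* \nu$, and the total variation distance between $\bar\nu_n$ and $\tau_h^* \bar\nu_n$ is bounded by $|F_n \triangle hF_n|/|F_n|$, which vanishes by the Følner property. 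Any subsequential weak limit of $(\bar\nu_n)$ is therefore simultaneously Gibbs and invariant under every generator, hence under the whole group.

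The principal technical point I anticipate is the DLR-closure step in the middle paragraph: with the non-compact state space $\mathbb Z$, one must confirm that the finite-volume specifications depend sufficiently regularly on their boundary data for weak convergence to preserve the DLR identity. The simplifying feature is that each specification uses only finitely many boundary coordinates, and assumption~\eqref{eq:Vsum} together with positive definiteness keeps the relevant normalisations strictly positive and continuous, so the required Feller property reduces to a trivial verification; once this is in place the two assertions follow from standard tightness-and-weak-limit and Følner-averaging arguments.
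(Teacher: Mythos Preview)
Your overall strategy---variance bound from Corollary~\ref{cor:upper}, tightness, verification of DLR, and F{\o}lner averaging---is exactly the one the paper uses. The issue lies in your choice of boundary condition: you pin the height at a single fixed vertex $\partial$ and otherwise take the free measure on the subgraph $G_N$, whereas the paper wires the entire outer boundary $\partial_N=\partial G_N$ to zero. This small change matters in two places.

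First, your Green's function step is not justified as written. With a single killing point, $G_N(v,v)=\deg_{G_N}(v)\,R_{\mathrm{eff}}^{G_N}(v,\partial)$, and Rayleigh's monotonicity gives $R_{\mathrm{eff}}^{G_N}(v,\partial)\ge R_{\mathrm{eff}}^{\Gamma}(v,\partial)$, i.e.\ a \emph{lower} bound, not the upper bound you claim; in particular the inequality $G_N(v,v)\le G_\Gamma(v,v)$ can fail (take $G_1$ to be a long path from $\partial$ to $v$ inside a transient $\Gamma$). Note also that the resulting sequence $R_{\mathrm{eff}}^{G_N}(v,\partial)$ is decreasing and hence bounded on \emph{any} locally finite graph, so transience is not actually being used in your argument---a red flag. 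With wired boundary the picture is clean: before hitting $\partial_N$ the walk on $G_N$ coincides with the walk on $\Gamma$, so $G_N(v,v)$ is the Green's function of the walk on $\Gamma$ killed on $\partial_N$, which is bounded above by the unkilled $G_\Gamma(v,v)<\infty$ by transience.

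Second, any weak limit of your measures satisfies $h_\partial=0$ almost surely, and therefore fails the DLR equation for every finite $\Lambda$ containing $\partial$: the specification $\nu_\Lambda^{\varphi}$ assigns positive mass to $h_\partial\neq 0$. With wired boundary the pinning recedes to infinity and this obstruction disappears. Switching to wired boundary conditions repairs both points with no other change to your outline; the F{\o}lner-averaging part is fine and matches the paper.
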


\begin{proof}
Let now $G_N\nearrow \Gamma$, as $N\to \infty$ be an exhaustion of $\Gamma$ by finite subgraphs $G_N$. 
Define the boundary $\partial_N := \partial G_N$ to be the set of vertices in $ G_N$ adjacent to a vertex from outside of $ G_N$.
Let $\E_{G_N, \exact}[\cdot]$ be the expectation associated with the height function on $ G_N$ with $0$-boundary conditions. 
Fix any $\Lambda \subset \c{V}$ finite and let $f: \Lambda \to \RR$ be any function.
It follows from Corollary~\ref{cor:upper} that 
\[
	\E_{G_N,\exact}[(h, f)^2] \leq C \max_{v \in \Lambda} \frac{\Green_{N} (v,v)}{\deg(v)} (f, f)_{\Lambda}. 
\]
where $C < \infty$, and $\Green_N$ is the Green's function of simple random walk on $G_N$ killed on hitting $\partial_N$. 
Since $\Gamma$ is transient, the right-hand side is uniformly bounded in $N$. 
Therefore, the sequence $\PP_{G_N, \exact}(h |_{\Lambda})$ is tight and subsequential limits exist by Prokhorov's theorem. 
By a diagonal argument, we can extract a further sub-sequence $N_K$ so that $\PP_{G_{N_K}, \exact}(h|_{\Lambda})$ converges for each
$\Lambda$ finite. 
Any such subsequential limit is a Gibbs measure as it satisfies the DLR relations. 
This proves the first part of the theorem. 

For the second part, suppose that $\Gamma$ is an amenable Cayley graph, 
so that 
\(
	\E_{G_N, \exact}[h_v^2] \leq C',
\)
for some $C' < \infty$ which is independent of $v$ and the exhaustion $(G_N)_{N\geq 1}$. 
Let $\mu$ be a subsequential limit (which exists by the argument above, and is a Gibbs measure). 
Let $o \in \mathscr V$. Since $\Gamma$ is amenable, there is some F\o lner sequence (also called Van Hove sequence) $(F_N)_{N \geq 1}$ {of sets of vertices} containing $o$. 
This means $F_N \nearrow \mathscr V$ and $|\partial F_N| / |F_N| \to 0$ as $N \to \infty$. 
Let
\[
	\nu_N := \frac{1}{|F_N|} \sum_{x \in F_N} \mu \circ \theta_x, 
\]
where $\theta_x$ is the shift towards $x$ (since $\Gamma$ is a Cayley graph, this is the same as left multiplication in the group).
This is a Gibbs measure because the set of Gibbs measures is closed under translations and convex combinations. 
Moreover, $\nu_N[h_v^2] \leq C'$ for each $N$ and $v \in \mathscr V$. Therefore, $(\nu_N)_{N\geq 1}$ is tight. 
Let $\nu$ be any subsequential limit, which is again a Gibbs measure. 
By construction and since $|\partial F_N| / |F_N| \to 0$, we have $\nu \circ \theta_x = \nu$ for each $x$, and hence $\nu$ is translation invariant.
\end{proof}


\section{Delocalisation implies the BKT phase transition in two dimensions} \label{sec:bkt}
\subsection{Background}
In this section we consider the spin and height function models on the square lattice $\mathbb Z^2$ and we show that delocalisation of the height function (defined below) is equivalent to the divergence of a certain series of two-point functions in the dual spin model. One of the conclusions is that delocalisation implies the Berezinskii--Kosterlitz--Thouless (BKT) phase transition in the dual spin model~\cite{Ber1,KosTho}. 

This implication for the classical XY and the Villain spin models, together with a proof of delocalisation of the associated height functions, was first obtained by Fr\"{o}hlich and Spencer in their seminal work establishing the BKT transition~\cite{FroSpe} (also see~\cite{KP} for an exhaustive account).
Recently alternative proofs were provided by Aizenman et al.~\cite{AHPS} (first for the Villain and later also for the XY model) and by the authors~\cite{vEnLis} for the XY model. Together with the new conceptual approach to delocalisation introduced by Lammers~\cite{Lammers}, these works improve our mathematical understanding of the BKT transition.
These results can be thought of as an inequality between the critical points of the mutually dual spin and height function models.
A natural conjecture is that these critical points always coincide. In the case of the XY and Villain model this was confirmed in a recent work Lammers~\cite{Lam23}.

In this section we provide yet another, and arguably the simplest so far, proof of the fact that delocalisation of the height function implies that correlations functions of certain observables in the spin model do not decay exponentially fast in the distance. 
For reflection positive models (which is the case when $-\mathcal U$ is itself positive definite, i.e., has nonnegative coefficients in the Fourier series), we moreover obtain an equivalence between delocalisation and nonsummability of spin correlations.
Our approach, unlike the previous ones, is based solely on duality, and does not invoke any additional (e.g.~graphical) representations of the models at hand.

\subsection{Notions of delocalisation}
It is now well established that integer-valued height functions on $\mathbb Z^2$ (or in general on periodic planar lattices) undergo a phase transition between a
\emph{localised} \emph{(smooth)} and a \emph{delocalised} \emph{(rough)} regime~\cite{FroSpe,CPST,DHLR,Lis21,DKMO,Lammers,LamOtt,Lammers22,Lam23}. 
We say that a potential $\mathcal V$ is localised (on $\mathbb Z^2$) if it admits a translation-invariant Gibbs measure on height functions $h: \mathbb Z^2\to \mathbb Z$. Otherwise we say that $\mathcal V$ delocalises.
It is known that if $\mathcal V$ is {convex} on the integers, and moreover its second discrete derivative is nonincreasing, i.e., $\mathcal V$ is a so-called \emph{supergaussian} potential~\cite{LamOtt,Lammers22}, then delocalisation in this sense is equivalent to the fact that 
\begin{align} \label{eq:vardeloc}
\sup_{N \geq 1} \nu_{\Lambda_N,\exact}[h_{\mathbf 0}^2] =\infty,
\end{align}
where $\mathbf 0$ is the origin of $\mathbb Z^2$, and $\Lambda_N=[-N,N]^2\cap \mathbb Z^2 $ where we identify all vertices in $\Lambda_N$ that are adjacent to $\Lambda^c_N:=\mathbb Z^2 \setminus \Lambda_N$ as one boundary vertex $\partial$ (wired boundary) and set $h_\partial =0$. Moreover for such potentials, the sequence in~\eqref{eq:vardeloc} is nondecreasing in~$N$~\cite{LamOtt}, and it grows up to a mulitplicative constant at least like $\log N$~\cite{Lammers22} (which is consistent with the general conjecture stating that delocalised height functions should behave like the GFF at large scales). 

Yet another approach to delocalisation is to work with infinite volume \emph{gradient} measures and study the variance of the increment of the height between two distant points. This was e.g.\ studied in~\cite{Lis19,Lis21} in the context of the six-vertex model and it will be convenient for us to follow the same route here, as we already know by Remark~\ref{rem:tightness} that translation invariant gradient Gibbs measures always exist for potentials as in Definition~\ref{def:potential}. 
We say that a potential $\mathcal V$ is $\nabla$-\emph{delocalised} (on $\mathbb Z^2$) if for any translation-invariant gradient Gibbs measure $\nu$ (with expectation $\nu$), we have 
\begin{align} \label{eq:graddeloc}
\sup_{v\in \mathbb Z^2} \nu[(h_v-h_{\mathbf 0})^2]= \infty.
\end{align}

\begin{lemma} \label{lem:nabla}
If a potential $\mathcal V$ is delocalised, then it is also $\nabla$-delocalised.
\end{lemma}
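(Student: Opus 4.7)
The plan is to prove the contrapositive: assume there exists a translation-invariant gradient Gibbs measure $\nu$ with $C := \sup_{v \in \mathbb Z^2} \nu[(h_v - h_{\mathbf 0})^2] < \infty$, and from it construct a translation-invariant Gibbs measure on heights $h : \mathbb Z^2 \to \mathbb Z$. For each base point $v_0 \in \mathbb Z^2$, let $\nu^{v_0}$ denote the law on height functions under which $h_{v_0} = 0$ almost surely and the gradient $\d h$ has distribution $\nu$. Since $\n \sim \nu$ is almost surely an exact $1$-form, $\nu^{v_0}$ is well-defined (it recovers $h_v$ by summing $\n$ along any path from $v_0$ to $v$), and it is itself a Gibbs measure on heights: the finite-volume specification $\nu_\Lambda^\varphi$ depends on heights only through gradients $\d h$, so gradient-DLR for $\nu$ implies height-DLR for $\nu^{v_0}$.

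The key step is to symmetrise over base points along a Følner sequence $F_N := [-N, N]^2 \cap \mathbb Z^2$, setting
\[
\bar\nu_N := \frac{1}{|F_N|}\sum_{v_0 \in F_N} \nu^{v_0}.
\]
As a convex combination of Gibbs measures with the same specification, $\bar\nu_N$ is itself Gibbs. A direct calculation using translation invariance of $\nu$ gives $\theta_x \nu^{v_0} = \nu^{v_0 - x}$, so $\theta_x \bar\nu_N$ is the same average but indexed over $F_N - x$; since $|F_N \triangle (F_N - x)|/|F_N| \to 0$, every weak subsequential limit of $(\bar\nu_N)_{N \geq 1}$ will be translation invariant. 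For tightness, note that for any fixed $v$,
\[
\bar\nu_N[h_v^2] = \frac{1}{|F_N|}\sum_{v_0 \in F_N} \nu[(h_v - h_{v_0})^2] \leq C
\]
again by translation invariance of $\nu$; so each marginal is uniformly tight on $\mathbb Z$, and a standard diagonal extraction yields a weak limit $\mu$ on $\mathbb Z^{\mathbb Z^2}$ with the product topology.

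It remains to check that $\mu$ is a Gibbs measure. Since heights are integer-valued and the finite-volume specification $\nu_\Lambda^\varphi$ depends continuously on only finitely many boundary values $\varphi|_{\partial \Lambda}$, the DLR identity $\bar\nu_N(A) = \int \nu_\Lambda^\varphi(A) \, d\bar\nu_N(\varphi)$ for local events $A$ passes to the weak limit by a standard argument. The main (and really only substantive) obstacle is that for some potentials admitted by Definition~\ref{def:potential} one may have $\mathcal V(n) = +\infty$ for large $|n|$ (as in the Lipschitz example), so that the support of $\nu_\Lambda^\varphi$ varies with $\varphi$; however, the uniform $L^2$ bound $\bar\nu_N[h_v^2] \leq C$ ensures that the mass on large boundary values is uniformly small, which is enough to push $\int \nu_\Lambda^\varphi(A) \, d\bar\nu_N(\varphi)$ through the weak convergence of $\bar\nu_N$ and conclude that $\mu$ is a translation-invariant Gibbs measure, contradicting delocalisation.
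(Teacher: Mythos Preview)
Your proof is correct and follows essentially the same approach as the paper's: the paper's argument is a two-line sketch that says to take the gradient measure viewed as a height function pinned at the origin, average its translates over a F{\o}lner sequence as in Theorem~\ref{thm:extransinv}, and extract a translation-invariant Gibbs limit---exactly what you carry out in detail. Your extra discussion of the $\mathcal V(n)=+\infty$ case is not really needed (the map $\varphi\mapsto \nu_\Lambda^\varphi(A)$ is still a bounded function of finitely many integer coordinates of $\varphi$, so DLR passes to weak limits without further argument), but it does no harm.
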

\begin{proof}
Suppose otherwise that there exists a translation-invariant gradient Gibbs measure~$\nu$ for which the supremum in~\eqref{eq:graddeloc} is finite.
Then, as in Theorem~\ref{thm:extransinv}, by considering convex combinations of translations of $\nu$ thought of as a measure on height functions $\tilde h$ given by $\tilde h_v=h_v-h_{\mathbf 0}$ we can construct a translation invariant Gibbs measure on height functions which is a contradiction. We leave the details to the reader.
\end{proof}

We note that the opposite implication is also true e.g.\ for potentials $\mathcal V$ that are convex on the integers. Indeed, in this case it is known from the foundational work of Sheffield~\cite{sheffield} that each Gibbs measure for height functions has a finite second moment (since the height at every point has a log-concave distribution).

\subsection{Setup}\label{sec:setup}
Let us fix mutually dual potentials $\mathcal V$ and $\mathcal U$ as in Definition~\ref{def:potential}.
It will be convenient to consider the spin and height function models on finite, exponentially growing tori $\mathbb T_N= (\mathbb Z / 2^N\mathbb Z)^2$. This way we achieve three properties by construction:
\begin{itemize}
\item we work with measures that are translation invariant and invariant under $\pi/2$-rotations,
\item we can apply the duality of Lemma~\ref{lem:duality} first in the finite volume $\mathbb T_N$, and then take simultaneous (subsequential) infinite-volume limits, $\mathbb T_N\to \mathbb Z^2$ as $N\to \infty$, on both sides of the duality relation,
\item we get an explicit monotonicity in $N$ for the Green's function of the random walk on $\mathbb T_N$ (see below).
\end{itemize}

Let $\mu=\mu_{\mathbb Z^2,\coclosed}$ be any subsequential limit of $\mu_{\mathbb T_{N}, \coclosed}$, and let $\nu =\nu_{\mathbb Z^2,\exact}$ denote the limit of $\nu_{\mathbb T_{N}, \exact}$ taken along the same subsequence (it exists by Remark~\ref{rem:tightness}).
One can think of $\nu$ as a probability measure on height functions $h:\mathbb Z^2\to \mathbb Z$ satisfying $h(\mathbf 0)=0$. By weak convergence, the duality of Lemma~\ref{lem:duality} holds also for $\mu$ and $\nu$ whenever $\epsilon \in \Omega^1(\mathbb Z^2,\RR)$ is of bounded support.
The same is true for Corollary~\ref{L: covariance} and Corollary~\ref{cor:upper}, where we choose $\partial = \mathbf 0$ and consider the Green's function of a random walk on $\mathbb Z^2$ killed at $\mathbf 0$.

Note that by planar duality, we have $H_\coclosed(\mathbb Z^2, \mathbb S) \cong H_\exact((\mathbb Z^2)^*, \mathbb S) $,
where $(\mathbb Z^2)^* \cong \mathbf 0^*+\mathbb Z^2$ with $\mathbf 0^*:=(1/2,1/2)$, is the \emph{dual} square lattice. 
Since $ H_\exact((\mathbb Z^2)^*, \mathbb S) \cong {\mathbb S}^{(\mathbb Z^2)^*\setminus \{ \mathbf 0^* \}}$, we can think of $\mu$ as a Gibbs measure on spin configurations $\theta$ on $(\mathbb Z^2)^*$ where the spin at $\mathbf 0^*$ is fixed to be the identity element of $\mathbb S$. 

Finally, let $v_n=(n,0)\in \mathbb Z^2$, and
let $p_n=(e_0,e_1,\ldots,e_{n-1})$ be the directed horizontal path from $v_0$ to $v_{n}$. We identify $p_n$ with the 1-form that assigns $1$ to each directed edge in $p_n$, 
and $0$ to the directed edges of $\mathbb Z^2$ that are not in $p_n$. For compactness of notation, we write $\J_i=\J_{e_i}$ and $h_i=h_{u_i}$.

\subsection{The implication}

Applying Lemma~\ref{L: covariance} and Corollary~\ref{cor:upper} in finite volume, and then taking the subsequential limit as in Section~\ref{sec:setup}, we have 
\begin{align} \label{eq:vanish}
	0\leq \sum_{i=0}^{n-1}   \mu[\mathcal U''(\J_i)] - \mu \Big[\Big(\sum_{i=0}^{n-1}\mathcal U'(\J_i)\Big)^2\Big]\leq \limsup_{N\to \infty}  \nu_{\mathbb T_{N}, \exact}[(h_{0} -h_{n})^2]\leq \limsup_{N\to \infty} \mathbf G_{N}(v_n,v_n),
\end{align}
where $\mathbf G_{N}$ is the Green's function of simple random walk on $\mathbb T_N$ killed at $\mathbf 0$. This is equivalent to a random walk on $\mathbb Z^2$ killed at all points in $2^N\mathbb Z^2$. Hence, $\mathbf G_{N}\nearrow \mathbf G$ as $N\to \infty$, where $\mathbf G$ is the Green's function of a random walk on $\mathbb Z^2$ killed at $\mathbf 0$.
Classically we have $\mathbf G(v_n,v_n)\leq \textnormal{const}\times\log n$ (see e.g.~\cite{LyPer}). Plugging this bound into \eqref{eq:vanish}, dividing both sides by $n$,
letting $n\to \infty$, and finally using translation invariance of $\mu$, we get
\begin{align}\label{eq:cesaro}
	\lim_{n\to \infty}\frac{1}{n}\sum_{k=1}^{n-1} u_k=\frac12 \mu[\mathcal U''(\J_0)-\mathcal U'(\J_0)^2] , \quad  \textnormal{where} \quad u_k=\sum_{i=1}^k \mu [\mathcal U'(\J_0)\mathcal U'(\J_i)].
\end{align}
In particular $u_k$ converges in the Ces\`aro sense as $k\to \infty$. 

\begin{theorem}[Delocalisation implies the BKT phase transition] \label{thm:BKT}
Consider the setup from Section~\ref{sec:setup}.
	If the height function delocalises in the sense that~\eqref{eq:graddeloc} holds true for~$\nu$, then
	\begin{align*}
		\sum_{i=1}^\infty  i |\mu [\mathcal U'(\J_0)\mathcal U'(\J_i)]|= \infty. 
	\end{align*}
	In particular, there is no exponential decay of the two-point function $\mu [\mathcal U'(\J_0)\mathcal U'(\J_i)]$ as $i\to \infty$.
\end{theorem}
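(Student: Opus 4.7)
The plan is to argue by contrapositive: I assume $\sum_{i=1}^{\infty} i\,|c_i| < \infty$ with $c_i := \mu[\mathcal{U}'(\J_0)\mathcal{U}'(\J_i)]$, and derive a uniform upper bound on $\nu[(h_v - h_{\mathbf{0}})^2]$ in $v \in \mathbb{Z}^2$, contradicting \eqref{eq:graddeloc}. First I apply Lemma~\ref{L: covariance} on the torus $\mathbb{T}_N$ with the choice $\epsilon = \omega = p_n$, the horizontal path of length $n$. Since $(\n, p_n) = h_{v_n} - h_{\mathbf{0}}$ and $(\mathcal{U}'(\J), p_n) = \sum_{i=0}^{n-1} \mathcal{U}'(\J_i)$, this gives
\[
\nu_{\mathbb{T}_N, \exact}\big[(h_{v_n} - h_{\mathbf{0}})^2\big] = n\,\mu_{\mathbb{T}_N, \coclosed}[\mathcal{U}''(\J_0)] - \mu_{\mathbb{T}_N, \coclosed}\Big[\Big(\sum_{i=0}^{n-1} \mathcal{U}'(\J_i)\Big)^2\Big].
\]
Because $\mathcal{U}$ is $C^2$ on the compact group $\mathbb{S}$, the right-hand side depends on finitely many spins through a bounded continuous function and hence converges, along the defining subsequence, to $n\,\mu[\mathcal{U}''(\J_0)] - \mu[(\sum_{i=0}^{n-1}\mathcal{U}'(\J_i))^2]$. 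Lower semicontinuity of $x \mapsto x^2$ under the weak subsequential limit of gradient measures (Portmanteau) then yields
\[
\nu\big[(h_{v_n} - h_{\mathbf{0}})^2\big] \leq n\,\mu[\mathcal{U}''(\J_0)] - \mu\Big[\Big(\sum_{i=0}^{n-1} \mathcal{U}'(\J_i)\Big)^2\Big].
\]

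Next I expand the square using translation invariance of $\mu$ and rearrange to rewrite this bound as
\[
\nu\big[(h_{v_n} - h_{\mathbf{0}})^2\big] \leq n\, A_n + 2 \sum_{k=1}^{n-1} k\, c_k, \qquad A_n := \mu[\mathcal{U}''(\J_0) - \mathcal{U}'(\J_0)^2] - 2 \sum_{k=1}^{n-1} c_k.
\]
Under the hypothesis $\sum_k k|c_k| < \infty$, the series $\sum_k c_k$ converges absolutely to some $S$, and the Ces\`aro identity \eqref{eq:cesaro} then forces $S = \tfrac{1}{2}\mu[\mathcal{U}''(\J_0) - \mathcal{U}'(\J_0)^2]$. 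Consequently $n A_n = -2n \sum_{k \geq n} c_k$, which is bounded in absolute value by $2 \sum_{k \geq n} k|c_k| \to 0$. Combined with the convergence of $\sum_{k=1}^{n-1} k c_k$, this shows $\sup_n \nu[(h_{v_n} - h_{\mathbf{0}})^2] < \infty$.

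To extend the bound to arbitrary $v \in \mathbb{Z}^2$, I use that $\mu$ inherits $\pi/2$-rotation and translation invariance from the tori. Running the same argument along a vertical path gives $\sup_m \nu[(h_{(0,m)} - h_{\mathbf{0}})^2] < \infty$, and for $v = (a,b)$ the triangle inequality in $L^2(\nu)$ combined with translation invariance (which gives $\nu[(h_v - h_{(a,0)})^2] = \nu[(h_{(0,b)} - h_{\mathbf{0}})^2]$) yields $\sup_v \nu[(h_v - h_{\mathbf{0}})^2] < \infty$, contradicting \eqref{eq:graddeloc}. The ``in particular'' statement about absence of exponential decay is then immediate, since exponential decay of $c_i$ would force $\sum_i i|c_i| < \infty$.

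The main technical care is in the passage to the infinite-volume limit: weak convergence of the gradient measures does not a priori preserve second moments, so one cannot hope for an equality for $\nu[(h_{v_n}-h_{\mathbf{0}})^2]$. Fortunately the upper bound via Portmanteau is exactly what is needed to rule out delocalisation, so this is not really an obstacle but rather the reason the contrapositive direction goes through cleanly. The only other subtle point is bookkeeping the expansion of the square into $nA_n + 2\sum k c_k$, where the role of \eqref{eq:cesaro} is precisely to guarantee that the potentially linear-in-$n$ coefficient $A_n$ in fact vanishes in the limit.
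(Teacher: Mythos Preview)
Your proof is correct and follows essentially the same route as the paper's: apply the covariance duality along the path $p_n$, expand the square by translation invariance, use the Ces\`aro identity \eqref{eq:cesaro} to kill the coefficient of $n$, and then extend to all $v$ via translation and rotation invariance. The only noteworthy difference is that the paper applies Lemma~\ref{L: covariance} directly in infinite volume to obtain an \emph{equality} for $\nu[(h_{v_n}-h_{\mathbf 0})^2]$ (this is legitimate because the characteristic function of $h_{v_n}-h_{\mathbf 0}$ is given by a $C^2$ function of the twist parameter via Lemma~\ref{lem:duality}, which forces the second moment to exist and match), whereas you pass through finite volume and invoke Fatou/Portmanteau to get only the upper bound; since only the upper bound is needed for the contrapositive, your more cautious version works just as well.
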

\begin{proof}
	We can assume that $\sum_{i=1}^\infty |\mu [\mathcal U'(\J_0)\mathcal U'(\J_i)]|<\infty$ since otherwise we are done. This means that $u_k$ converges in the classical sense to its Ces\`aro limit
	from \eqref{eq:cesaro}. Hence, 
	\begin{align} \label{eq:exid}
		\mu[\mathcal U''(\J_0)-\mathcal U'(\J_0)^2] = \lim_{k\to \infty}2 u_k = 2\sum_{i=1}^\infty  \mu [\mathcal U'(\J_0)\mathcal U'(\J_i)].
	\end{align}
	By Lemma~\ref{L: covariance} applied in the infinite volume ($p_n$ has bounded support) and translation invariance of $\mu$, we have
	\begin{align*}
		\nu[(h_{n}-h_{ 0})^2] & =\sum_{i=0}^{n-1}\mu[\mathcal U''(\J_i)-\mathcal U'(\J_i)^2] -2\sum_{i=1}^{n-1} (n-i) \mu [\mathcal U'(\J_0)\mathcal U'(\J_i)] \\
		&= 2n \sum_{i=1}^\infty\mu [\mathcal U'(\J_0)\mathcal U'(\J_i)] -2\sum_{i=1}^{n-1} (n-i) \mu [\mathcal U'(\J_0)\mathcal U'(\J_i)]\\
		&= 2\sum_{i=1}^{n-1}i\mu [\mathcal U'(\J_0)\mathcal U'(\J_i)] +2n\sum_{i=n}^{\infty}  \mu [\mathcal U'(\J_0)\mathcal U'(\J_i)] \\
		&\leq 2 \sum_{i=1}^\infty  i |\mu [\mathcal U'(\J_0)\mathcal U'(\J_i)]|.
	\end{align*}
	By the assumption, and translation and $\pi/2$-rotation invariance of $\mathcal \nu$, we have
	\[
\infty =\sup_{v\in \mathbb Z^2} \nu[(h_{v}-h_{\mathbf 0})^2]\leq 2 \sup_{n\geq 1} \nu[(h_{n}-h_{\mathbf 0})^2],
	\]
which together with the inequality above finishes the proof.
\end{proof}

It is classical that spin correlation functions decay exponentially fast at high temperatures (here the temperature is incorporated in the definition of $\mathcal U$).
This in particular implies that $\sum_{i=1}^\infty i |\mu [\mathcal U'(\J_0)\mathcal U'(\J_i)]|<\infty$. 
From this point of view Theorem~\ref{thm:BKT} says that if the height function delocalises, then the associated
spin model undergoes a BKT phase transition from a regime with exponential decay to a regime with slow decay of correlations.

\subsection{The case of the $XY$ model}
The change of behaviour of the two-point functions $\mu[\mathcal U'(J_0) \mathcal U'(J_i)]$ as $i\to \infty$ clearly indicates a phase transition in the spin model. However it is more common to look at correlations of the type $\mu[\mathcal F(\theta_u-\theta_{u'})]$ when $u$ and $u'$ are far apart, where~$\theta$ is the underlying spin field on $(\mathbb Z^2)^*$ (the faces of $\mathbb Z^2$), and where $\mathcal F$ is some chosen function, e.g. $\mathcal F=\mathcal U$. 

For general spin models, it is not clear how to compare these two types of correlation functions. Here we present an approach based on correlation inequalities in the case of the classical XY model, i.e., when $\mathcal U(t)=- \beta\cos(t)$, where $\beta>0$ is the inverse temperature in the spin model.

To this end, consider the setup as in Theorem~\ref{thm:BKT}. If $\{u_i,u_i'\}$ is the dual edge of $e_i$, writing $\theta_i=\theta_{u_i}$ and $\theta_i'=\theta_{u_i}'$, we have
\begin{align} \label{eq:trigon}
	\frac2{\beta^2} \mu[\mathcal U'(e_0) \mathcal U'(e_i)] &= 2\mu[\sin(\theta_0-\theta_0') \sin(\theta_i-\theta_i')] 
	\\ &=\mu[\cos(\theta_0-\theta_0'-\theta_i+\theta_i')]-\mu[\cos(\theta_0-\theta_0'+\theta_i-\theta_i')] \nonumber
\end{align}
A version of the classical Ginibre inequality for the XY model~\cite{Gin} (see also~\cite{BLU}) says that
\begin{align*}
	\mu[ \sin ( \theta _0)\cos(\theta'_0) \sin(\theta_i) \cos(\theta'_i) ] \leq \mu[ \sin ( \theta _0) \sin(\theta_i)] \mu[ \cos(\theta'_0)\cos(\theta'_i) ],
\end{align*}
which after expanding into cosines of sums of angles and disregarding terms that are not invariant under global rotation (shift of angles mod $2\pi$) whose expectations vanish,
we obtain
\begin{align*}
	\mu[\cos(\theta_0+\theta_0'-\theta_i-\theta'_i)]& + \mu[\cos(\theta_0-\theta'_0-\theta_i+\theta'_i)]  -\mu[\cos(\theta_0-\theta'_0+\theta_i-\theta'_i)]  \\
	&\leq 2 \mu[ \cos ( \theta _0-\theta_i)] \mu[ \cos(\theta'_0-\theta'_i) ]  \\
	&= 2 \mu[ \cos ( \theta _0-\theta_i)]^2,
\end{align*}
where the last identity follows by reflection invariance of $\mu$ across the real line.
Analogous inequality follows by exchanging the roles of $\theta_i$ and $\theta'_i$, which results in swapping the signs of the second and third term in the first line.
Using that the first term is positive by the first Griffiths inequality, and combining with \eqref{eq:trigon}, we get that
\begin{align} \label{eq: inequality spin-spin}
	\tfrac12\mu[\cos(\theta_0+\theta_0'-\theta_i-\theta'_i)]+|\mu[\mathcal U'(e_0) \mathcal U'(e_i)] | & \leq \max\{ \mu[ \cos ( \theta _0-\theta_i)]^2,\mu[ \cos ( \theta _0-\theta'_i)]^2\} \nonumber
	\\ &= \mu[ \cos ( \theta _0-\theta_i)]^2,
\end{align}
where the last identity follows from the Messager--Miracle-Sole inequality~\cite{MMS} by applying the reflection across the real line.

These considerations, together with Lemma~\ref{lem:nabla}, lead us to the following corollary that recovers the main result of~\cite{vEnLis}. 

\begin{corollary}\label{C: deloc BKT}
	If the height function associated with the XY model on $\mathbb Z^2$ delocalises, then  
	\[
	\sum_{i=1}^\infty  i  \mu[ \cos ( \theta _0-\theta_i)]^2 \geq \sum_{i=1}^\infty  i  (\mu[ \cos ( \theta _0-\theta_i)]^2-\tfrac12\mu[\cos(\theta_0+\theta_0'-\theta_i-\theta'_i)]) = \infty.
	\] 
\end{corollary}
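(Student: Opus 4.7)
The plan is to chain together the three ingredients that have been set up just before the statement: Lemma~\ref{lem:nabla}, Theorem~\ref{thm:BKT}, and the inequality~\eqref{eq: inequality spin-spin}. First, assuming that the height function for the XY model on $\mathbb Z^2$ delocalises in the sense of~\eqref{eq:vardeloc}, Lemma~\ref{lem:nabla} gives $\nabla$-delocalisation, i.e.~that the gradient Gibbs measure $\nu$ obtained as the subsequential limit in Section~\ref{sec:setup} satisfies $\sup_{v}\nu[(h_v-h_{\mathbf 0})^2]=\infty$. Theorem~\ref{thm:BKT} then applies and yields
\[
\sum_{i=1}^\infty i\,\bigl|\mu[\mathcal U'(\J_0)\mathcal U'(\J_i)]\bigr|=\infty.
\]

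Next I would translate this divergence into a statement about spin two-point functions using the already-derived pointwise inequality~\eqref{eq: inequality spin-spin}, namely
\[
\tfrac12\mu[\cos(\theta_0+\theta_0'-\theta_i-\theta'_i)]+\bigl|\mu[\mathcal U'(\J_0)\mathcal U'(\J_i)]\bigr|\le \mu[\cos(\theta_0-\theta_i)]^2.
\]
Multiplying by $i$ and summing, the contribution $\sum i\,|\mu[\mathcal U'(\J_0)\mathcal U'(\J_i)]|$ is already infinite, so the right-hand side dominates an infinite sum, giving the middle equality in the statement:
\[
\sum_{i=1}^\infty i\bigl(\mu[\cos(\theta_0-\theta_i)]^2-\tfrac12\mu[\cos(\theta_0+\theta_0'-\theta_i-\theta'_i)]\bigr)=\infty.
\]

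Finally, for the first (outer) inequality it suffices to observe that each term $\mu[\cos(\theta_0+\theta_0'-\theta_i-\theta'_i)]$ is nonnegative. This is the first Griffiths inequality for the XY model (applicable because $\mathcal U(t)=-\beta\cos t$ has nonnegative Fourier coefficients, so $\mu$ is ferromagnetic in the standard sense), which guarantees nonnegativity of all correlations of products of cosines of sums and differences of spin angles. Subtracting a nonnegative quantity from $\mu[\cos(\theta_0-\theta_i)]^2$ can only decrease the sum, so $\sum_{i\ge 1}i\,\mu[\cos(\theta_0-\theta_i)]^2$ bounds the middle sum from above and is therefore also infinite.

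I expect no real obstacle here: once the covariance duality of Lemma~\ref{L: covariance} and the Ginibre/Messager--Miracle-Sole manipulations that produced~\eqref{eq: inequality spin-spin} are in hand, the corollary is just a bookkeeping step. The only point that requires a moment of care is ensuring that the infinite-volume limit measure $\mu$ inherits the symmetries and correlation inequalities (translation invariance, reflection across the real line, and the first Griffiths inequality) from the finite-volume torus measures $\mu_{\mathbb T_N,\coclosed}$; this is standard since all three properties pass to weak limits of finite-volume Gibbs measures.
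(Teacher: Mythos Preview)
Your proposal is correct and follows exactly the route the paper intends: the paper merely says ``These considerations, together with Lemma~\ref{lem:nabla}, lead us to the following corollary,'' and you have spelled out precisely those considerations---Lemma~\ref{lem:nabla} to pass from delocalisation to $\nabla$-delocalisation, Theorem~\ref{thm:BKT} for the divergence of $\sum i\,|\mu[\mathcal U'(\J_0)\mathcal U'(\J_i)]|$, the inequality~\eqref{eq: inequality spin-spin} to dominate this by the middle sum, and the first Griffiths inequality for the outer inequality. Your remark that the symmetries and correlation inequalities pass to the weak limit is also in line with how the paper treats the infinite-volume measure in Section~\ref{sec:setup}.
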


\begin{remark}
For the XY model it is known that there exists only one translation-invariant Gibbs measure $\mu$ on $\mathbb Z^2$~\cite{Pfister}, and hence regular, instead of subsequential, limits may be taken Section~\ref{sec:setup}.
\end{remark}

\subsection{An equivalence}
When $-\mathcal U$ is itself positive definite, i.e., all its Fourier coefficients are nonnegative, we can actually conclude more than in the above discussion. Indeed, in this case $\mu$ is reflection positive (see Appendix \ref{ap:RP}). This implies that 
\[
\mu [\mathcal U'(\J_0)\mathcal U'(\J_i)] \geq 0
\] 
as this holds true on $\mathbb T_N$ for every $N\geq i$ by reflection positivity. Therefore the Ces\`aro convergence from \eqref{eq:cesaro} implies classical convergence, and \eqref{eq:exid} always holds true.
The same argument as in the proof of Theorem~\ref{thm:BKT} yields the following corollary.
\begin{corollary} \label{cor:equiv}
Consider the setup from Section~\ref{sec:setup}, and moreover assume that $-\mathcal U$ is positive definite. 
Then~\eqref{eq:graddeloc} holds true for~$\nu$ if and only if
	\begin{align} \label{eq:infinite}
		\sum_{i=1}^\infty  i \mu [\mathcal U'(\J_0)\mathcal U'(\J_i)]= \infty. 
	\end{align}
\end{corollary}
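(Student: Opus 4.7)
The plan is to derive both implications from the same explicit identity obtained in the proof of Theorem~\ref{thm:BKT}, exploiting the crucial extra ingredient the hypothesis on $-\mathcal{U}$ supplies: nonnegativity of the two-point function. Concretely, when $-\mathcal{U}$ has nonnegative Fourier coefficients, the finite-volume measures $\mu_{\mathbb{T}_N,\coclosed}$ are reflection positive in the sense of Appendix~\ref{ap:RP}. Applied to the pair of functions $\mathcal{U}'(\J_0)$ and $\mathcal{U}'(\J_i)$ on opposite halves of the torus (which is possible as soon as $N$ is large enough relative to $i$), reflection positivity yields $\mu_{\mathbb{T}_N,\coclosed}[\mathcal{U}'(\J_0)\mathcal{U}'(\J_i)] \geq 0$, and passing to the subsequential limit gives $\mu[\mathcal{U}'(\J_0)\mathcal{U}'(\J_i)] \geq 0$ for every $i \geq 0$.

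For the forward direction, Theorem~\ref{thm:BKT} already establishes that delocalisation implies $\sum_{i \geq 1} i\,|\mu[\mathcal{U}'(\J_0)\mathcal{U}'(\J_i)]| = \infty$; under the sign condition just established one may drop the absolute values to obtain \eqref{eq:infinite}. For the reverse direction, I would first upgrade the Ces\`aro convergence in \eqref{eq:cesaro} to classical convergence: since the partial sums $u_k = \sum_{i=1}^k \mu[\mathcal{U}'(\J_0)\mathcal{U}'(\J_i)]$ are now monotonically nondecreasing, Ces\`aro convergence to the finite value $\tfrac{1}{2}\mu[\mathcal{U}''(\J_0) - \mathcal{U}'(\J_0)^2]$ forces $u_k$ itself to converge to the same limit. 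In particular the identity \eqref{eq:exid} holds unconditionally, and consequently the full telescoping computation from the proof of Theorem~\ref{thm:BKT} goes through, giving
\begin{align*}
\nu[(h_n - h_0)^2] = 2\sum_{i=1}^{n-1} i\, \mu[\mathcal{U}'(\J_0)\mathcal{U}'(\J_i)] + 2n\sum_{i=n}^{\infty} \mu[\mathcal{U}'(\J_0)\mathcal{U}'(\J_i)].
\end{align*}
Both terms on the right are nonnegative, so discarding the tail yields the lower bound $\nu[(h_n - h_0)^2] \geq 2\sum_{i=1}^{n-1} i\, \mu[\mathcal{U}'(\J_0)\mathcal{U}'(\J_i)]$. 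Assuming \eqref{eq:infinite}, the right-hand side diverges as $n \to \infty$, which together with translation invariance of $\nu$ gives \eqref{eq:graddeloc}.

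The conceptual simplification here is that in Theorem~\ref{thm:BKT} one had to preassume absolute summability of $\mu[\mathcal{U}'(\J_0)\mathcal{U}'(\J_i)]$ in order to pass from the Ces\`aro identity \eqref{eq:cesaro} to the pointwise identity \eqref{eq:exid}; reflection positivity delivers this for free via monotonicity of partial sums. The main potential obstacle is verifying that the reflection positivity argument genuinely applies to $\mu_{\mathbb{T}_N,\coclosed}$ with both reflections through edges and through vertices (so that $\J_0$ and $\J_i$ can be separated for arbitrary $i$), but this is standard on tori of sidelength $2^N$ and is precisely what Appendix~\ref{ap:RP} is set up to handle.
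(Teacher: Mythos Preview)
Your proposal is correct and follows essentially the same route as the paper: reflection positivity from the positive-definiteness of $-\mathcal U$ gives $\mu[\mathcal U'(\J_0)\mathcal U'(\J_i)]\geq 0$, which upgrades the Ces\`aro convergence in \eqref{eq:cesaro} to classical convergence so that \eqref{eq:exid} holds unconditionally, and then the identity from the proof of Theorem~\ref{thm:BKT} yields both implications. You have in fact spelled out the reverse direction more explicitly than the paper does, which simply says ``the same argument as in the proof of Theorem~\ref{thm:BKT} yields the following corollary.''
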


\begin{remark}
The identity from \eqref{eq:exid} can be rewritten in a more symmetric form as
\begin{align} \label{eq:finite}
\sum_{i\in \mathbb Z}  \mu [\mathcal U'(\J_0)\mathcal U'(\J_i)]=\mu[\mathcal U''(\J_0)],
\end{align}
where now the sum is over a bi-infinite path of edges. Curiously, this is an exact (but nonlocal) identity for correlation functions that is independent of the (hidden in the definition of $\mathcal U$) 
temperature parameter. In particular the series in \eqref{eq:finite} is always convergent, independently of the temperature. This is in contrast with the behaviour of the series in~\eqref{eq:infinite}
that does undergo a phase transition. This, together with the relation to the gradient of the heigh function~\eqref{eq:pontwiseD2}, is consistent with the conjecture that in delocalised 
regime the discrete GFF describes the large-scale fluctuations of the height function. Indeed, the two-point function of the gradient of the discrete GFF is known to decay like the inverse square of the distance (see e.g.~\cite{BauWeb}).
\end{remark}


\section{Monotonicity of variance of the height function in temperature} \label{sec:monot}
In this section we will show that the variance of the height function is monotone in a natural temperature parameter under some further assumptions on the potential.
To the best of our knowledge the result is new, even in the case of planar graphs.
Together with the dichotomy of Lammers \cite{Lammers22}, this directly implies that the height function of the XY model undergoes a sharp phase transition on the square lattice. 

\subsection{Setup} \label{sec:setup2} 
Let $G = (V, E)$ be a finite graph. 
To each edge $e \in E$, associate
\begin{itemize}
	\item a twice continuously differentiable spin potential $\c{U}_e: \mathbb{S} \to \RR$ such that $-\mathcal U_e$ is positive definite, 
	\item a non-negative real $\beta_e$ (thought of as the inverse temperature in the spin model), 
	\item the dual potential $\c{V}_{\beta_e} := \c{V}_{\beta_e, e}$ of $\beta_e \c{U}_e$ as in Definition \ref{def:potential}. 
\end{itemize}
We will consider the family of measures $\nu_{\beta, \exact}$ 
for height functions and their dual measures $\mu_{\beta, \coclosed}$, indexed by $\beta = (\beta_e)_{e \in E}$. 

We wish to point out that the above requirements on the spin potential $\c{U}$ can also be described purely in terms of the height function potential:
if $\c{V}$ satisfies the conditions of Definition \ref{def:potential} and $e^{-\c{V}}$ is infinitely divisible (in the sense that each division satisfies Definition \ref{def:potential}), then the corresponding spin potential satisfies the above conditions, see Appendix \ref{ap:div}. 
This equivalence is not important in the remainder of this section. 

\begin{remark}
		The setup here is less general than in the rest of this text, 
		as we need to make a further assumption on the potentials $\c{U}$ (or equivalently on the potentials $\c{V}$ as explained in Appendix \ref{ap:div}). 
\end{remark}

\subsection{Increasing variance}
The main result of this section is the following fact. 
\begin{theorem} \label{T: var_increasing_height}
	Consider the setup as in Section \ref{sec:setup2}. For each $x, y \in V$ and $e \in E$ the function
	\[
		\beta_e \mapsto \nu_{\beta, \exact}[(h_x - h_y)^2]
	\]
	is non-decreasing. 
\end{theorem}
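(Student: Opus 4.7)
My first instinct is to translate the statement to the dual spin side via covariance duality and then invoke a Ginibre-type correlation inequality, which is precisely what Appendix \ref{ap:Gin} is set up to provide.

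First I would fix an oriented path $p$ from $y$ to $x$ in $G$ and let $\epsilon$ be the associated $\pm 1$ path $1$-form, so that $(\n, \epsilon) = h_x - h_y$ and $\epsilon_e^2 = \mathds 1_{e \in p}$. Absorbing $\beta_e$ into the composite potential $\beta_e \mathcal U_e$ and applying Lemma \ref{L: covariance} with $\omega = \epsilon$ yields the key identity
\[
\nu_{\beta,\exact}[(h_x - h_y)^2] = \sum_{e \in p} \beta_e \,\mu_{\beta,\coclosed}[\mathcal U''_e(J_e)] - \mu_{\beta,\coclosed}\Bigl[\Bigl(\sum_{e \in p} \beta_e \epsilon_e \mathcal U'_e(J_e)\Bigr)^2\Bigr].
\]
Differentiating this in $\beta_f$ produces two kinds of contributions: direct-derivative terms from the explicit $\beta_e$ prefactors, and measure-derivative terms $\partial_{\beta_f} \mu_\beta[X] = -\mathrm{Cov}_{\mu_\beta}(X, \mathcal U_f(J_f))$. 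A second application of Lemma \ref{L: covariance} with $\omega = \mathds 1_f - \mathds 1_{-f}$ identifies the mixed term $\epsilon_f \mu_\beta[(\sum_e \beta_e \epsilon_e \mathcal U'_e)\mathcal U'_f]$ with $\epsilon_f^2 \mu_\beta[\mathcal U''_f] - \epsilon_f \nu_\beta[(h_x - h_y)\n_f]/\beta_f$, which I use to cancel the $\beta_f$-derivative contribution coming from the square and leaves only pure spin-model covariances of $\mathcal U, \mathcal U', \mathcal U''$.

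Now the hypothesis that $-\mathcal U_e$ is positive definite gives $-\mathcal U_e(\alpha) = \sum_{n \geq 1} c_n^e \cos(n\alpha)$ with $c_n^e \geq 0$, so that $\mu_{\beta,\coclosed}$, lifted via $J = \d\theta$ to spin angles $\theta$ on the dual vertices, is a multi-frequency ferromagnetic XY-type Gibbs measure whose Hamiltonian is a nonnegative linear combination of interactions $\cos(n(\theta_u - \theta_v))$. The generalised Ginibre inequality of Appendix \ref{ap:Gin} then supplies $\mathrm{Cov}_{\mu_\beta}(\cos\Phi_1, \cos\Phi_2) \geq 0$ for all integer linear combinations $\Phi_1, \Phi_2$ of the $\theta$. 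Expanding the collected derivative via the Fourier series and the product-to-sum identity $\sin A \sin B = \tfrac12 (\cos(A-B) - \cos(A+B))$, every surviving term becomes a nonnegative multiple of such a Ginibre covariance, hence is nonnegative.

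The main obstacle is the bookkeeping in the last step: verifying that, after all these expansions, every combinatorial coefficient in front of a Ginibre-type covariance is nonnegative. The key point is that the $\pm 1$ signs carried by $\epsilon$ along the path interact with the product-to-sum identities in such a way that the signed cross-terms precisely combine into manifestly nonnegative covariances of cosines of integer combinations of the spins. Once this accounting is carried out, path-independence (the left-hand side does not depend on the choice of $p$) provides a useful consistency check, and the monotonicity $\partial_{\beta_f} \nu_{\beta,\exact}[(h_x-h_y)^2] \geq 0$ follows.
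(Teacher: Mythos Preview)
Your approach has a real gap at precisely the step you flag as ``bookkeeping''. After expanding $S := \sum_{e\in p}\beta_e\epsilon_e\,\mathcal U'_e(J_e)$ via $\sin A\sin B = \tfrac12(\cos(A-B)-\cos(A+B))$, the measure-derivative contribution $-\mathrm{Cov}_\mu(S^2,\,-\mathcal U_f)$ contains, for every pair of edges $e,e'\in p$ and modes $n,m$, a term
\[
-\tfrac12\,\beta_e\beta_{e'}\,\epsilon_e\epsilon_{e'}\,c_n^e c_m^{e'}\,nm\;\mathrm{Cov}_\mu\bigl(\cos(nJ_e-mJ_{e'}),\,-\mathcal U_f\bigr),
\]
and Ginibre makes this covariance nonnegative. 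For a path with consistent orientation all $\epsilon_e=+1$, so these terms carry a genuine minus sign; there is nothing in the remaining terms that manifestly dominates them. Your claim that the $\pm 1$ path signs ``precisely combine'' everything into nonnegative Ginibre covariances is not verified and, on inspection of even a two-edge XY path, does not hold term by term. Separately, your ``second application of Lemma~\ref{L: covariance}'' is circular: it reintroduces the height correlation $\nu[(h_x-h_y)\n_f]$, and converting that back via the same lemma just returns the original expression. (A smaller issue: $\mu_{\beta,\coclosed}$ lives on $H_\Diamond(\mathbb S)$, which on a general graph is \emph{not} of the form $J=\d\theta$ for vertex spins; the relevant Ginibre inequality is the $H_\Diamond$ version of Appendix~\ref{ap:Gin}, not the classical one.)

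The paper sidesteps the whole sign problem by a trick you are missing: it adjoins an auxiliary edge $g=\{x,y\}$ with the pure XY potential $-\mathcal U_g=\cos$ at coupling $\lambda$, and applies Lemma~\ref{L: covariance} with the \emph{single-edge} $1$-form $\epsilon'=\mathds 1_g$. This gives
\[
\nu_{\beta,\lambda,\exact}[(h_x-h_y)^2]=\lambda\,\mu_{\beta,\lambda,\coclosed}[\cos J_g]+\lambda^2\,\mu_{\beta,\lambda,\coclosed}[\cos^2 J_g]-\lambda^2,
\]
and now both $\cos$ and $\cos^2=\tfrac12(1+\cos 2\cdot)$ are positive definite, so Lemma~\ref{L: Ginibre} yields monotonicity in every $\beta_e$ with $e\neq g$ directly, with no sign issues. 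Sending $\lambda\to\infty$ (which forces $J_g\to 0$ and returns the original $\mu_{\beta,\coclosed}$) finishes the proof. The point is that for a \emph{single XY edge} one has $(\mathcal U'_g)^2=\sin^2=1-\cos^2$, so the awkward square term becomes a constant minus a positive definite function; this algebraic miracle is exactly what fails for multi-edge paths and general potentials, and is why your direct expansion cannot be closed by Ginibre alone.
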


\begin{example} Let us first give some examples of potentials to which this theorem applies. 
	\begin{itemize}
		\item In case of the classical XY model, $-\c{U}(t) = \cos(2\pi t)$ which is positive definite. 
		\item A generalisation of this is any height function that is dual to a spin system with a positive definite potential.
		\item The Gaussian potential $k \mapsto \frac{1}{2\beta} k^2$ does itself not fall into this class. 
		However, it arises as a (rescaled) limit of the XY height function potentials so the conclusion of the above theorem still holds. 
		We will show the result on the height-function side, but it was already known on the dual spin side, see e.g. \cite{NewWu, AHPS}. 
		Let $G$ be a finite graph and let $e \in E$ be some fixed edge. 
		Replace $e$ by $N$ copies of $e$ and set on each copy of this edge the XY potential $\c{V}_{\beta N}(k) = -\log(I_{k}(\beta N))$. 
		Note that the effective height-function potential on the edge $e$ is equal to
		\[
			e^{-\c{V}^{\mathrm{eff}}_N(k)} = I_k(\beta N)^N. 
		\]
		On the other hand, the modified Bessel function $I_k(x)$ has the expansion as $N\to \infty$:
		\[
			I_k(\beta N) = \frac{e^{\beta N}}{ \sqrt{2\pi \beta N}} \left(1 - \frac{4k^2 - 1}{8\beta N} + O(1 / N^2)\right),
		\]
		which goes back to \cite{Kirch_bes}. In particular, as $N \to \infty$, 
		\[
			I_k(\beta N)^N \sim \left(\frac{e^{\beta N}}{ \sqrt{2\pi \beta N}}\right)^N e^{\frac{1}{8 \beta}} e^{- \frac{1}{2\beta}k^2} =: c_{\beta, N} e^{-\frac{1}{2\beta} k^2}, 
		\]
		where the constant $c_{\beta, N}$ does not depend on $k$. 
		Note that for Gibbs measures, the constant $c_{\beta, N}$ corresponds to a global shift of the potential $\c{V}^{\mathrm{eff}}_N$ which does not change the model. 
		Hence, since for each $N$ we can apply Theorem \ref{T: var_increasing_height}, the result holds in the limit as $N \to \infty$, where the potential on the edge $e$ equals $k \mapsto \frac{1}{2\beta}k^2$ as desired. 
	\end{itemize}
\end{example}

To prove the theorem, let us begin by slightly extending Ginibre's inequalities \cite{Gin} to spin models on $\Hstar(\mathbb{S})$ (the original inequality deals with $H_{\exact}(\mathbb{S})$).
\begin{lemma}[Ginibre] \label{L: Ginibre}
		Consider the setup as in Section \ref{sec:setup2}. 
		For all positive definite functions $F:\mathbb{S} \to \RR$ and all $e, f \in E$, we have
		\[
			\frac{\partial}{\partial \beta_e}\mu_{\beta, \coclosed}[F(\J_f)]\geq 0. 
		\]
\end{lemma}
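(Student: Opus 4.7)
The plan is to express the derivative as a covariance, and then invoke a Ginibre-type correlation inequality for the constrained spin measure $\mu_{\beta,\coclosed}$ (the proof of the latter being deferred to Appendix~\ref{ap:Gin}).

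First, I would differentiate under the integral sign in the definition of $\mu_{\beta,\coclosed}$ to obtain
\[
\frac{\partial}{\partial \beta_e} \mu_{\beta,\coclosed}[F(\J_f)] = \textnormal{Cov}_{\mu_{\beta,\coclosed}}\bigl(F(\J_f),\, -\c{U}_e(\J_e)\bigr).
\]
By hypothesis $F$ is positive definite, and by the standing assumption of Section~\ref{sec:setup2} so is $-\c{U}_e$, so the task reduces to nonnegativity of the covariance of two positive definite functions of single-edge variables under $\mu_{\beta,\coclosed}$. A useful preliminary observation is that each Boltzmann factor $e^{-\beta_e \c{U}_e(\J_e)}$ is itself positive definite for every $\beta_e \geq 0$: expanding the exponential as $\sum_{k\geq 0} \beta_e^k (-\c{U}_e)^k/k!$ and using that positive definiteness on $\mathbb{S}$ is closed under nonnegative linear combinations and under pointwise products (the Fourier coefficients of a product being convolutions of nonnegative sequences). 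Hence $\mu_{\beta,\coclosed}$ is of the form $d\mu \propto \prod_e w_e(\J_e)\,d\J$ on $H_\coclosed(\mathbb{S})$ with each $w_e$ positive definite.

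Next I would invoke the generalised Ginibre inequality of Appendix~\ref{ap:Gin}: for any probability measure $\mu$ on $H_\coclosed(\mathbb{S})$ of the above product form and any positive definite real-valued $F_1, F_2$ on $\mathbb{S}$ and edges $e_1, e_2 \in E$,
\[
\textnormal{Cov}_\mu\bigl(F_1(\J_{e_1}),\, F_2(\J_{e_2})\bigr) \geq 0.
\]
The strategy to prove this is Ginibre's duplication trick: on two independent copies $(\J, \J') \sim \mu \otimes \mu$, symmetrisation rewrites the covariance as $\tfrac{1}{2}(\mu \otimes \mu)\bigl[(F_1(\J_{e_1}) - F_1(\J'_{e_1}))(F_2(\J_{e_2}) - F_2(\J'_{e_2}))\bigr]$. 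Fourier-expanding all positive definite factors as characters with nonnegative coefficients and integrating against the Haar measure on $H_\coclosed(\mathbb{S})$ enforces the constraint that the total Fourier mode lies in the Pontryagin annihilator $H_\exact(\mathbb{Z}) \subset \mathbb{Z}^E$ (cf.\ Appendix~\ref{ap:duality}); the resulting double sum then splits into manifestly nonnegative pieces by a standard Ginibre-type pairing on the signs of the Fourier modes of $F_1$ and $F_2$.

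The main obstacle is that the classical Ginibre argument is naturally formulated on full product groups such as $\mathbb{S}^V$, whereas here the configurations are constrained to the subgroup $H_\coclosed(\mathbb{S}) \subset \mathbb{S}^E$. The change of variables $(\J, \J') \mapsto (\J+\J', \J-\J')$ is no longer a bijection on $H_\coclosed(\mathbb{S})^2$ --- it is $|H_\coclosed(\mathbb{S})[2]|$-to-one on the ambient torus --- so a direct substitution in physical variables requires care. I would sidestep this by working throughout on the Fourier/character side: the Haar integration over $H_\coclosed(\mathbb{S})$ simply projects onto characters whose dual labels lie in $H_\exact(\mathbb{Z})$, and once this reduction is made Ginibre's combinatorial sign-count applies verbatim on this discrete abelian group.
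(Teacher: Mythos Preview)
Your reduction to a covariance inequality is correct and is exactly how the paper proceeds: differentiating in $\beta_e$ gives $\textnormal{Cov}_{\mu_{\beta,\coclosed}}(F(\J_f),-\c{U}_e(\J_e))$, and the task becomes the Ginibre-type statement that any two positive definite single-edge observables are nonnegatively correlated under $\mu_{\beta,\coclosed}$ (this is Lemma~\ref{L: Ginibre 2} in Appendix~\ref{ap:Gin}). You also correctly isolate the only genuinely new difficulty, namely the constraint $\J\in H_\coclosed(\mathbb{S})$.

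Where your route diverges from the paper is in how that constraint is handled. You dismiss the physical change of variables as problematic on $H_\coclosed(\mathbb{S})$ and propose to work on the Fourier side instead. The paper does the opposite: it keeps the physical variables and removes the constraint by invoking the spanning-tree parametrisation of Lemma~\ref{Lapp: Hstar def}, which identifies the Haar measure on $H_\coclosed(\mathbb{S})$ with the product Lebesgue measure on $\mathbb{S}^{E\setminus T}$. On that genuine product torus the classical Ginibre trick goes through unchanged: the trigonometric identities turn $\prod_i(\cos(m_i\J_{e_i})\pm\cos(m_i\J'_{e_i}))$ into $F\bigl(\tfrac{\J+\J'}{2}\bigr)F\bigl(\tfrac{\J-\J'}{2}\bigr)$, and the change of variables $(\J,\J')\mapsto\bigl(\tfrac{\J+\J'}{2},\tfrac{\J-\J'}{2}\bigr)$ factors the integral as a square (Lemma~\ref{L: Ginibre core}). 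So the very step you flagged as requiring care is in fact the one the paper carries out, after choosing the right coordinates.

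Your Fourier alternative is where the gap lies. The phrase ``Ginibre's combinatorial sign-count applies verbatim'' is not substantiated, and in fact Ginibre's argument is not a Fourier-side combinatorial identity but precisely the change-of-variables factorisation you set aside. If you Fourier-expand and integrate, the projection onto the annihilator $H_\exact(\mathbb{Z})$ (which you identify correctly) still leaves a signed sum over subsets of factors assigned to the primed copy, with sign $\prod_{i\in S}s_i$ and two indicator constraints; its nonnegativity is not evident without something equivalent to the factorisation. Unless you supply that pairing explicitly, the spanning-tree argument is both simpler and complete.
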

\begin{proof}
	This is proved in Appendix \ref{ap:Gin}. 
\end{proof}

\begin{proof}[Proof of Theorem \ref{T: var_increasing_height}]
We first add an additional edge $g$ connecting $x$ and $y$ (even if there was already such an edge present).
On this edge, we put the potential $-\c{U}_{g}(t) = \cos(t)$ and parameter $\beta_{g} =\lambda \geq 0$. 
Thus, we remain in the setup of Section~\ref{sec:setup2}. We write $\mu_{\beta,\lambda,\coclosed}$ and $\nu_{\beta,\lambda,\exact}$ for the corresponding spin and height-function measure respectively, and note that $\mu_{\beta,\lambda,\coclosed}\to \mu_{\beta,\coclosed}$ as $\lambda\to \infty$. 

Let $\epsilon$ be any $1$-form vanishing on $g$ and such that $\d^*\epsilon=\delta_x-\delta_y$, and let $\epsilon'$ be the $1$-form vanishing outside of $g$ and such that $\d^*\epsilon'=\delta_x-\delta_y$.
By Lemma \ref{L: covariance} applied first to $\epsilon'$ and then to $\epsilon$, we have that 
\begin{align}\label{eq:incr}
	\mathbb \nu_{\beta,\lambda, \exact}[(h_x-h_{y})^2] &=  \beta_g\mu_{\beta,\lambda, \coclosed}[\cos(\J_g)] +\beta_g^2\mu_{\beta, \lambda,\coclosed}[\cos(\J_g)^2]-\beta_g^2 \nonumber \\
	&= \sum_{e \in E}\big( \mu_{\beta,\lambda, \coclosed}[\c{U}_e''(\J_e)] \epsilon^2_e -  \mu_{\beta,\lambda, \coclosed} \big[(\c{U}_e'(\J), \epsilon)^2\big]\big) .
\end{align}
Since $2\cos^2 t=1+\cos2t$ is positive definite we can apply Lemma \ref{L: Ginibre} to the first line above and conclude that \eqref{eq:incr} is nondecreasing in $\beta_e$ for any $e\neq g$.
By weak convergence, the same holds for 
 \[
\sum_{e \in E}\big( \mu_{\beta, \coclosed}[\c{U}_e''(\J_e)] \epsilon^2_e -  \mu_{\beta, \coclosed} \big[(\c{U}_e'(\J), \epsilon)^2\big]\big) = \mathbb \nu_{\beta, \exact}[(h_x-h_{y})^2],
 \]
where the last equality again follows from Lemma \ref{L: covariance}. This ends the proof. 
\end{proof}


\subsection{Delocalisation of roughly planar height function models.}
In this section, we will use Theorem \ref{T: var_increasing_height} to deduce that on many planar graphs, the height function delocalises. 
Consider here an infinite lattice $\Gamma = (\mathscr{V}, \mathscr{E})$ embedded in the plane, 
but not necessarily planar. We will assume throughout that $\Gamma$ (under this embedding) invariant under a bi-periodic lattice action, 
and that it has finite degrees. 
An example of such $\Gamma$ is $\ZZ^2$ where all vertices are connected if they are within distance $M < \infty$ from each other. 
Given $\Gamma$, recall that $(G_N)_{N \geq 1}$ is an exhaustion of $\Gamma$ if $G_N$ is a finite subgraph of $\Gamma$ for each $N$, $G_N \subset G_{N + 1}$ and $G_N \nearrow \Gamma$. 
We will also consider the \emph{wiring} of $G_N$ by identifying $G_N^c$ in $\Gamma$ to a single vertex $\partial$ and removing all the self-loops created in this process. 
The obtained graph will be denoted by $G_N^*$. 
On such graphs, we will take the measures $\nu_{N, \beta, \exact}$ as in Section \ref{sec:setup2}, and we identify the space of $1$-forms $\n$ in $\Hcycle(\mathbb{Z})$ with functions $h$ in $\Omega^0_0(\ZZ)$. 

\begin{theorem}[Delocalisation] \label{T: deloc}
	Let $\Gamma$ be as above and consider the setup as in Section~\ref{sec:setup2} where we assume that $\c{U}_e$ is the same for all edges. 
	There exists a $\beta_c < \infty$ such that for all $\beta \geq \beta_c$ and all wired exhaustions $G_N^* \nearrow \Gamma$, 
	\[
		\nu_{N, \beta, \exact}[h_o^2] \to \infty. 
	\]
\end{theorem}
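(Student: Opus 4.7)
The first step is to invoke Theorem~\ref{T: var_increasing_height}, which gives that $\beta \mapsto \nu_{N, \beta, \exact}[h_o^2]$ is non-decreasing for each fixed $N$. Hence it suffices to exhibit a single finite $\beta_c$ for which $\nu_{N, \beta_c, \exact}[h_o^2] \to \infty$ along the exhaustion; delocalisation at every larger $\beta$ then follows by monotonicity for each $N$ and then by taking $N \to \infty$.

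To produce such $\beta_c$, I would reduce to a bi-periodic \emph{planar} sub-lattice $\Gamma_0 \subset \Gamma$ with the same vertex set — such a $\Gamma_0$ exists because $\Gamma$ is bi-periodic and embedded in the plane (for the running example of $\Gamma = \ZZ^2$ with all edges of length at most $M$, one can take $\Gamma_0$ to be nearest-neighbour $\ZZ^2$). On the planar lattice $\Gamma_0$, large-$\beta$ delocalisation of the height function is already known (e.g.\ through Lammers~\cite{Lammers,Lam23}, or through the BKT side of the analogue of Corollary~\ref{C: deloc BKT} for general positive-definite $-\c{U}$). To transfer delocalisation from $\Gamma_0$ to $\Gamma$, I would pass to the dual spin model: by the natural analogue of Corollary~\ref{cor:equiv} on $\Gamma_0$ (which applies because $-\c{U}$ is positive definite and the analysis of Section~\ref{sec:bkt} is insensitive to the specific planar lattice), delocalisation on $\Gamma_0$ is equivalent to the divergence of $\sum_{k \geq 1} k\, \mu_{\Gamma_0, \beta, \coclosed}[\c{U}'(J_{e_0})\c{U}'(J_{e_k})]$ along a translation-invariant sequence of edges $(e_k)$, and the extended Ginibre inequality (Lemma~\ref{L: Ginibre}) supplies monotonicity of these two-point functions in each edge coupling, which is the right monotonicity to carry the divergence across to $\mu_{\Gamma, \beta, \coclosed}$.

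The main obstacle lies precisely in this transfer step: at zero coupling on the edges of $\Gamma \setminus \Gamma_0$, the marginal on the $\Gamma_0$-edges of the spin measure on $\Gamma$ (whose state space is $H_{\coclosed}(\Gamma, \mathbb{S})$) is \emph{not} the spin measure on $\Gamma_0$ (whose state space is the strictly smaller $H_{\coclosed}(\Gamma_0, \mathbb{S})$), because the co-closedness constraints differ. Reconciling this mismatch — either by decomposing $H_{\coclosed}(\Gamma, \mathbb{S})$ along the additional cycles introduced by the extras, or by performing the comparison on the height-function side through an auxiliary-edge interpolation in the spirit of the proof of Theorem~\ref{T: var_increasing_height} — is the principal technical hurdle. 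A cleaner alternative that bypasses the planar reduction entirely is to apply the Fr\"ohlich-Simon-Spencer infrared bound directly to the reflection-positive spin model on $\Gamma$ (using the reflection symmetries of the bi-periodic embedding together with positive-definiteness of $-\c{U}$), which yields a $1/k^2$-type lower bound on the relevant spin two-point functions; at large $\beta$ this forces divergence of $\sum_k k\, \mu_{\beta, \coclosed}[\c{U}'(J_{e_0})\c{U}'(J_{e_k})]$ and hence, via the covariance duality of Lemma~\ref{L: covariance}, of the height variance $\nu_{N, \beta, \exact}[h_o^2]$.
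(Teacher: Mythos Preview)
Your proposal has a genuine gap in the transfer from $\Gamma_0$ to $\Gamma$, and neither of the two routes you sketch closes it.

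The comparison points the wrong way. By Theorem~\ref{T: var_increasing_height} the variance is non-decreasing in each $\beta_e$, and (after any pure-cosine reduction) sending $\beta_e\to\infty$ on an edge makes its height weight $e^{-\mathcal V_{\beta_e}}$ constant, i.e.\ deletes that edge from the height-function model. Hence passing from $\Gamma$ to a planar sublattice $\Gamma_0\subset\Gamma$ can only \emph{increase} the height variance, so delocalisation on $\Gamma_0$ tells you nothing about $\Gamma$. Your spin-side route inherits the same structural problem on top of the state-space mismatch you already flagged: Lemma~\ref{L: Ginibre} concerns positive-definite observables of a \emph{single} edge variable, not the two-edge correlator $\mu_{\coclosed}[\mathcal U'(J_{e_0})\mathcal U'(J_{e_k})]$ (with $\mathcal U'$ odd), and there is no monotonicity of the latter under enlarging the edge set of the underlying graph. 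The infrared-bound alternative is also not available as stated: $\mu_{\coclosed}$ on a non-planar $\Gamma$ is a measure on divergence-free $1$-forms, not a vertex spin system on a torus, so the reflection-positivity framework of Appendix~\ref{ap:RP} and the Fr\"ohlich--Simon--Spencer machinery do not apply to it out of the box.

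The paper proceeds in the opposite direction. It first reduces to a single harmonic $-\mathcal U(J)=\cos(iJ)$ via Lemma~\ref{L: pure-pot reduction}; this step is essential, since convexity of the height potential --- required by Lammers' theorem --- is only known in that case (Lemma~\ref{L: XY convex deloc}). Then, rather than deleting edges, it \emph{splits} edges (Lemma~\ref{L: splitting edges}, which leaves the model unchanged) and \emph{glues} vertices (Lemma~\ref{L: Gluing vertices}, which is the $\lambda\to 0$ limit of an auxiliary XY edge and therefore, by Theorem~\ref{T: var_increasing_height}, only \emph{decreases} the variance). Iterating these via the star-tree transform converts $\Gamma$ into a bi-periodic planar graph $\Gamma'$ of degree at most three carrying convex potentials $D_e\mathcal V_{\beta/k_e}$, which satisfy Lammers' condition for all large $\beta$. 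Theorem~\ref{T: Lammers} then yields delocalisation on $\Gamma'$, and since every modification only lowered the variance, delocalisation on $\Gamma$ follows.
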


To prove this theorem, we rely on a beautiful result of Lammers \cite{Lammers}:
\begin{theorem}[Theorem 2.7 \cite{Lammers}] \label{T: Lammers}
	Let $\Gamma' = (V, E)$ be an infinite graph with degree at most three, that is invariant under some lattice action. 
	If $\c{V}$ is a convex potential for the height function with
	\[
		\c{V}(\pm 1) \leq \c{V}(0) + \log(2), 
	\]
	then the height function delocalises in the sense that there are no translation invariant Gibbs measures. 
\end{theorem}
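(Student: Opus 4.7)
The plan is to apply Lammers' Theorem~\ref{T: Lammers} to a bi-periodic max-degree-three subgraph of $\Gamma$ and then transport the resulting delocalisation to $\Gamma$ using the variance monotonicity of Theorem~\ref{T: var_increasing_height}. The subtlety is that in the height-function picture adding edges decreases variance, so delocalisation does not transfer directly from a subgraph to a supergraph; a careful application of monotonicity through edge contraction will be required.

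First I would construct a spanning bi-periodic subgraph $\Gamma_0\subset\Gamma$ of maximum degree three. Since $\Gamma$ is bi-periodic with finite degree, such $\Gamma_0$ exists: one selects at most three incident edges at each vertex of a fundamental domain and extends the choice periodically. For the motivating example $\Gamma=\ZZ^2$ with all edges of length at most $M$, one may take $\Gamma_0$ to be a brick-wall sub-lattice of the nearest-neighbour edges. Next I would verify Lammers' hypothesis for the potential $\c{V}_\beta$ on $\Gamma_0$ at large $\beta$: convexity holds for the height function potentials that arise here (for the XY model, $\c{V}_\beta(n)=-\log I_n(\beta)$ is convex by log-concavity of the modified Bessel functions), and the ratio bound $\c{V}_\beta(\pm 1)\leq \c{V}_\beta(0)+\log 2$ reduces through duality to a Fourier comparison on $e^{-\beta \c{U}}$ that is satisfied for $\beta$ large, since then $e^{-\beta \c{U}}$ concentrates near the minimum of $\c{U}$ and the Fourier coefficients for $n=0,\pm 1$ become comparable. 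Theorem~\ref{T: Lammers} then produces a finite threshold $\beta_0$ above which the height function on $\Gamma_0$ admits no translation-invariant Gibbs measure; by the amenability argument in the proof of Theorem~\ref{thm:extransinv}, this is equivalent to $\sup_N\nu^{\beta}_{\Gamma_0,N}[h_o^2]=\infty$.

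For the transfer, Theorem~\ref{T: var_increasing_height} gives that $\nu^{\tilde\beta}_\Gamma[h_o^2]$ is non-decreasing in each $\tilde\beta_e$. Setting $\tilde\beta_e=0$ for every $e\in\Gamma\setminus\Gamma_0$, so that $\c{V}_0$ forces $n_e=0$ and contracts the edge, yields the lower bound $\nu^{\beta}_\Gamma[h_o^2]\geq \nu^{\beta}_{\Gamma/(\Gamma\setminus\Gamma_0)}[h_o^2]$, and it therefore suffices to show that this contraction quotient supports a divergent variance.

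The main obstacle will be controlling the contraction quotient. For a generic choice of $\Gamma_0$, the complementary subgraph $\Gamma\setminus\Gamma_0$ is connected and percolates to the wired boundary, so the contraction collapses the origin onto $\partial$ and the lower bound becomes trivial. The remedy is to choose $\Gamma_0$ so that every connected component of $(\mathscr{V},\Gamma\setminus\Gamma_0)$ is finite, a planar-percolation type condition that can be arranged for bi-periodic finite-degree graphs embedded in the plane and is explicit in the motivating examples. Under such a choice the quotient is a bi-periodic multigraph whose vertices are bounded clusters, and the variance at the cluster containing $o$ can be bounded below by that on $\Gamma_0$ through a further comparison, possibly invoking Lammers once more on a refined quotient, thereby closing the argument. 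I anticipate that this combinatorial choice of $\Gamma_0$ and the attendant analysis of the contraction quotient will be the principal technical burden.
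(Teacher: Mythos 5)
The statement in question is Theorem~\ref{T: Lammers}, which this paper imports verbatim from Lammers \cite{Lammers} (Theorem 2.7 there); the paper supplies no proof of it and uses it as an external black box in the proof of Theorem~\ref{T: deloc}. Your proposal begins by \emph{applying} Theorem~\ref{T: Lammers} to a degree-three subgraph, which is circular as a proof of that very theorem. What you have actually written is a sketch of Theorem~\ref{T: deloc}, not of the cited Lammers result, so the proposal misidentifies its target.

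Even read as an attempt at Theorem~\ref{T: deloc}, it diverges from the paper and leaves a gap that you flag but do not close. The paper never looks for a max-degree-three \emph{subgraph} of $\Gamma$: it instead performs the ``star--tree transform'', splitting each edge into a chain via the convolution identity of Lemma~\ref{L: splitting edges} (which leaves the model unchanged) and gluing pairs of the new midpoints via Lemma~\ref{L: Gluing vertices} (which can only decrease variance), so that the resulting degree-three planar graph has variance dominated by the original, and no edges are ever deleted. In your version, setting $\tilde\beta_e=0$ on $\Gamma\setminus\Gamma_0$ contracts that subgraph, and unless its components are finite the origin collapses onto the wired boundary, making the lower bound vacuous; you leave the existence of a suitable $\Gamma_0$ and the ``further comparison, possibly invoking Lammers once more on a refined quotient'' unresolved, whereas the paper's splitting-and-gluing construction sidesteps the issue entirely. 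Hence the proposal is circular for the stated theorem, and incomplete for the theorem it actually addresses.
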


In general, the potentials $\c{V}$ as in the setup of Section \ref{sec:setup2} need \emph{not}  be convex. 
However, in some special cases they are, as we will show next.  
This will be crucial for what follows: in Section \ref{sec: red to convex} it will be shown that we can always reduce to this case. 

\begin{lemma} \label{L: XY convex deloc}
	If $-\c{U}(\J) = \cos(i\J)$ for some $i \in \NN$, then $\c{V}_{\beta}$ is convex over $i\ZZ$ for all $\beta$. 
	Moreover, translation invariant Gibbs measures exist if and only if $\nu_{N, \beta, \exact}[h_o^2]$ is bounded uniformly in $N$. 
\end{lemma}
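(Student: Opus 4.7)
The plan is to handle the two parts of the lemma separately, with most of the calculation going into identifying $\c{V}_\beta$ explicitly from the hypothesis $-\c{U}(\J)=\cos(i\J)$.

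For the convexity assertion, I would compute the Fourier coefficients of the spin weight $w_\beta(\theta) := \exp(-\beta \c{U}(\theta)) = \exp(\beta\cos(i\theta))$ using the generating-function identity $\exp(z\cos\phi) = \sum_{k\in \ZZ} I_k(z) \exp(ik\phi)$ for the modified Bessel functions, applied with $\phi = i\theta$. Matching with the expansion \eqref{eq:posdef} gives
\[
	\exp(-\c{V}_\beta(n)) = \begin{cases} I_{n/i}(\beta) & \text{if } i \mid n, \\ 0 & \text{otherwise,} \end{cases}
\]
so $\c{V}_\beta$ is supported on $i\ZZ$ with $\c{V}_\beta(ki) = -\log I_k(\beta)$. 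Convexity on $i\ZZ$ then reduces to the classical Tur\'an-type inequality $I_{k-1}(\beta) I_{k+1}(\beta) \leq I_k(\beta)^2$, which I would cite rather than reprove.

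For the equivalence, the fact that $\c{V}_\beta = +\infty$ off $i\ZZ$ means that every gradient $\nabla h$ takes values in $i\ZZ$ almost surely, both under $\nu_{N, \beta, \exact}$ and under any Gibbs measure; the change of variables $h \mapsto h/i$ therefore reduces the problem to a standard $\ZZ$-valued height function on $\Gamma$ with the convex (indeed supergaussian) potential $\widetilde{\c{V}}(k) = -\log I_k(\beta)$. The direction ``bounded variance $\Rightarrow$ translation invariant Gibbs measure exists'' then proceeds exactly as in the proof of Theorem~\ref{thm:extransinv}: a uniform bound on $\nu_{N, \beta, \exact}[h_o^2]$ yields tightness of the finite-volume marginals via Chebyshev, Prokhorov and a diagonal argument supply a subsequential Gibbs limit $\mu$, and F\o lner averaging over the $\ZZ^2$-action---valid since $\Gamma$ is bi-periodic of finite degree, hence amenable---upgrades $\mu$ to a translation invariant Gibbs measure.

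The opposite direction ``translation invariant Gibbs measure exists $\Rightarrow$ bounded variance'' is the main obstacle. I plan to combine two ingredients: the monotonicity of $\nu_{N, \beta, \exact}[h_o^2]$ in $N$ that holds for convex height-function potentials (Lammers--Ott~\cite{LamOtt}), and the dichotomy of Lammers~\cite{Lammers22} for supergaussian potentials, according to which divergence of the wired variance precludes every translation invariant Gibbs measure. Both references are phrased on $\ZZ^2$, whereas $\Gamma$ is only roughly planar; however, their proofs rely only on convexity/supergaussianity of the potential together with the bi-periodic symmetry and amenability of the underlying lattice, all of which $\Gamma$ satisfies, so I expect the adaptation to our setting to be essentially formal.
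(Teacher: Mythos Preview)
Your treatment of convexity is essentially the paper's: identify $\c{V}_\beta(ki)=-\log I_k(\beta)$ via the Bessel generating function and invoke the Tur\'an inequality, with the passage from $i=1$ to general $i$ being a change of variables.

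For the equivalence the paper takes a different and more robust route. It does not go through~\cite{Lammers22}; instead it records that the XY height function enjoys the \emph{absolute value FKG} property (this is \cite[Theorem~4]{vEnLis}) and then invokes the standard dichotomy of~\cite{LamOtt}: once $|h|$ is FKG, the wired measures are monotone in the domain in the sense of stochastic domination of $|h|$, so any translation invariant Gibbs measure~$\nu$ dominates every $\nu_{N,\beta,\exact}$ and hence $\nu_{N,\beta,\exact}[h_o^2]\le\nu[h_o^2]<\infty$. This argument uses no planarity at all and therefore applies directly to the roughly planar~$\Gamma$ of the ambient section.

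Your route has two soft spots. First, the monotonicity-in-$N$ you attribute to~\cite{LamOtt} for ``convex potentials'' is in fact obtained there \emph{from} absolute value FKG, so you are implicitly relying on the very ingredient the paper makes explicit while omitting it from your write-up. Second, and more seriously, the dichotomy of~\cite{Lammers22} genuinely uses planar topology (level-set percolation and crossing estimates); the assertion that its proof ``relies only on convexity/supergaussianity together with bi-periodic symmetry and amenability'' is not accurate, and the adaptation to a non-planar~$\Gamma$ is not ``essentially formal''. You also claim supergaussianity of $-\log I_k(\beta)$ without justification; this is believable but is a Bessel inequality strictly stronger than Tur\'an. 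None of this harms the eventual application in Theorem~\ref{T: deloc}, where the lemma is only invoked on the planar graph~$\Gamma'$, but as a proof of the lemma in the generality in which it is stated the non-planar case has a real gap; the paper's absolute value FKG argument closes it cleanly.
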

\begin{proof}
	In the case $-\c{U}(\J) = \cos(\J)$, convexity of $\c{V}_{\beta}$ over the integers is an easy consequence of the Tur\'an inequality, see e.g.~\cite{vEnLis}. 
	The extension to $-\c{U}(\J) = \cos(i\J)$ follows from a change of variables. 
	The second statement of the lemma was proved in the case of the XY model in \cite[Theorem 4]{vEnLis}. 
	It follows from a standard dichotomy (see e.g. \cite{LamOtt}) in the case where the height function satisfies the so called ``absolute value FKG'' property, 
	meaning that $|h|$ is FKG, see also \eqref{eq:vardeloc}. 
\end{proof}

\begin{remark}
	We wish to point out that the result of Lammers does \emph{not} depend on the potential $\c{V}$ being the same on each edge, just that it satisfies the condition of Theorem~\ref{T: Lammers} for all edges, and that the potentials are invariant under some lattice action. 
\end{remark}

We will first modify the potentials $-\c{U}$ so they will fit the framework of Theorem \ref{T: Lammers} and Lemma \ref{L: XY convex deloc}. 
Next, we modify the graph $\Gamma$ to obtain a graph $\Gamma'$ to which we can apply Theorem \ref{T: Lammers}
in such a way that $\Gamma'$ embeds into $\Gamma$ and the variance of the height function in $\Gamma'$ is smaller. 

\subsubsection{Reduction to convex potentials} \label{sec: red to convex}
We will apply here a simplification that allows us to only consider potentials of the form $-\c{U}(\J) = \alpha_i \cos(i\J)$. 
Since $-\c{U}$ is positive definite, it can be written as
\[
	-\c{U}(\J) = \alpha_0 + \sum_{i = 1}^\infty \alpha_i\cos(i \J),
\]
with $\alpha_i \geq 0$. Now let $i \geq 1$ be the first mode where $\alpha_i > 0$. Write $-\c{U}'= \alpha_i \cos(i\J)$ 
and $\nu'_{G, \beta, \exact}$ for the corresponding height function measure. 

\begin{lemma} \label{L: pure-pot reduction}
	For any finite graph $G = (V, E)$ with boundary $\partial$ and any $x \in V \setminus \{\partial\}$, we have
	\[	
		\nu'_{G, \beta, \exact}[h_x^2] \leq \nu_{G, \beta, \exact}[h_x^2].
	\]
\end{lemma}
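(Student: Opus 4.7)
The plan is to realise $\nu'_{G,\beta,\exact}$ and $\nu_{G,\beta,\exact}$ as the two endpoints of a one-parameter family of height function measures on a refined graph $\tilde G$, obtained by subdividing every edge of $G$, and then to apply Theorem~\ref{T: var_increasing_height} on $\tilde G$ to interpolate monotonically between them.

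To do this, I would decompose
\[
	-\c{U}(\J) = \alpha_0 + \alpha_i \cos(i\J) + R(\J), \qquad R(\J) := \sum_{j > i} \alpha_j \cos(j\J),
\]
so that both $-\c{U}' = \alpha_i\cos(i\cdot)$ and $R$ are positive definite. I construct $\tilde G$ from $G$ by replacing each edge $e = uv \in E$ with a length-two path through a fresh degree-two vertex $m_e$; call the two new edges $e_1 = um_e$ and $e_2 = m_ev$. In the setup of Section~\ref{sec:setup2} on $\tilde G$, I place the pure-mode spin potential $\c{U}'$ with inverse temperature $\beta_e$ on $e_1$ and the remainder potential $\tilde{\c{U}} := -R$ with inverse temperature $\lambda_e \in [0,\beta_e]$ on $e_2$; both $-\c{U}'$ and $-\tilde{\c{U}}$ are positive definite.

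Since $m_e$ has degree two in $\tilde G$, marginalising the height function on $\tilde G$ over $h_{m_e}$ convolves the two single-edge weights on $e_1$ and $e_2$ as functions on $\ZZ$, or equivalently (by Fourier duality) multiplies the two spin weights on the merged edge. Hence when $\lambda_e = \beta_e$ the effective spin weight on $e$ is
\[
	e^{-\beta_e\c{U}'(\J)} \cdot e^{\beta_e R(\J)} = e^{-\beta_e \alpha_0} \cdot e^{-\beta_e\c{U}(\J)},
\]
which agrees with the original full spin weight up to an irrelevant constant, so the marginal law of $h_x$ matches $\nu_{G,\beta,\exact}$ exactly. When $\lambda_e = 0$ the spin weight on $e_2$ is the constant $1$, whose Fourier dual is the delta mass $\delta_{n_{e_2},0}$; this forces $h_{m_e} = h_v$ and collapses the model to the height function on $G$ carrying only the pure-mode potential $\beta_e\c{U}'$ on each edge, i.e.\ to $\nu'_{G,\beta,\exact}$.

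Theorem~\ref{T: var_increasing_height}, applied on $\tilde G$ with $x$ and $\partial$ as the distinguished vertices, then yields that $\lambda_e \mapsto \nu^{\tilde G}_{\exact}[(h_x - h_\partial)^2]$ is non-decreasing in each $\lambda_e$; raising every $\lambda_e$ from $0$ to $\beta_e$, one edge at a time, gives the claimed inequality. The main subtle step I expect is the identification of the two endpoints: at $\lambda_e = \beta_e$ one has to carefully track the additive constant $\alpha_0$ produced by the decomposition of $\c{U}$, and at $\lambda_e = 0$ one has to verify that the degenerate weight $w \equiv 1$ on $e_2$ still fits into the framework of Section~\ref{sec:setup2} (as it does, since the corresponding height potential $\tilde{\c{V}}_0(n) = +\infty \cdot \mathbf{1}\{n \neq 0\}$ satisfies Definition~\ref{def:potential} trivially) and genuinely collapses the intermediate heights as required.
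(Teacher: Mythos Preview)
Your argument is correct. The endpoint identifications are right: marginalising a degree-two vertex convolves the height weights, which on the Fourier side multiplies the spin weights, so at $\lambda_e=\beta_e$ you recover (up to the harmless constant $e^{-\beta_e\alpha_0}$) the full model, and at $\lambda_e=0$ the weight $w\equiv 1$ on $e_2$ indeed has height potential $\delta_{n,0}$, collapsing $h_{m_e}=h_v$ and leaving the pure-mode model on~$G$. Since both $-\c{U}'=\alpha_i\cos(i\cdot)$ and $R=-\c{U}-\alpha_0+\c{U}'$ are positive definite and $C^2$, the setup of Section~\ref{sec:setup2} is satisfied on $\tilde G$ and Theorem~\ref{T: var_increasing_height} applies verbatim to each $\lambda_e$.

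This is, however, a genuinely different route from the paper's. The paper stays on the original graph $G$ and interpolates the spin potential itself via $\c{U}_\alpha=\c{U}'+\alpha\c{U}''$ with $\c{U}''=\c{U}-\c{U}'$, obtaining monotonicity in $\alpha$ directly from (the argument underlying) Theorem~\ref{T: var_increasing_height}; no auxiliary graph is needed. Your series-edge construction trades that shortcut for a literal application of Theorem~\ref{T: var_increasing_height}: because adding spin potentials corresponds to edges in series (not in parallel) for the height function, the subdivision lets you realise $\c{U}'+\alpha\c{U}''$ as an honest two-edge model where $\alpha$ is genuinely one of the $\beta_e$'s. The paper's argument is shorter; yours is more self-contained in that it does not ask the reader to revisit the proof of Theorem~\ref{T: var_increasing_height} to see why monotonicity in $\alpha$ (rather than in a single $\beta_e$) follows.
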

\begin{proof}
	Take $-\c{U}'' = -\c{U} + \c{U}'$ which is positive definite. 
	Write for any $\alpha \geq 0$ 
	\[
		\c{U}_{\alpha}(\J) = \c{U}'(\J) + \alpha\c{U''}(\J), 
	\]
	so that $\c{U}_1 = \c{U}$, $\c{U}_0$ = $\c{U}'$ and $-\c{U}_{\alpha}$ is positive definite for each $\alpha$. 
	 Let $\nu_{G, \beta, \alpha, \exact}$ be  the corresponding height function measure. 
	 Theorem \ref{T: var_increasing_height} implies that for any $x \in V$
	 \[
	 	\frac{\partial}{\partial \alpha} \nu_{G, \beta, \alpha, \exact}[h_x^2] \geq 0,
	 \]
	 so that the variance is minimized at $\alpha = 0$. This shows the result. 
\end{proof}

\subsubsection{Graph Modifications.} \label{sec: graph mod}
Fix $\Gamma$ an infinite graph and $G_N \nearrow \Gamma$ an exhaustion as above.   
We wish to perform two operations:
\begin{enumerate}[\hspace{1cm}(a)]
	\item splitting edges into multiple sub-edges and
	\item gluing vertices together, 
\end{enumerate}
in such a way that the variance of the height function does not increase. 

Operation (a) is the easiest: to add $k - 1$ ``evenly spaced'' vertices to an edge without changing the height function on the original graph, 
we wish to find a potential $\c{V}_{\beta}^{(k)}$ such that 
\[
	e^{-\c{V}_{\beta}} = (e^{-\c{V}_{\beta}^{(k)}})^{*k},
\]
where by $^{*k}$ we mean $k$-fold convolution. 

Using basic properties of the Fourier transform, we can take the potential $\c{V}_{\beta}^{(k)} = \c{V}_{\beta / k}$ which is dual to $-(\beta / k)\c{U}$.  
This offers the following lemma. 

\begin{lemma}[Splitting edges] \label{L: splitting edges}
	Suppose $\c{V}$ corresponds to a spin potential $\c{U}$, such that $-\c{U}$ is positive definite.  
	For each $k \in \NN$, we have
	\[
		e^{-\c{V}_{\beta}} = (e^{-\c{V}_{\beta / k} })^{*k}.
	\]
\end{lemma}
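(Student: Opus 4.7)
The plan is to prove the identity by passing to Fourier series, where convolution on $\mathbb{Z}$ corresponds to pointwise multiplication on $\mathbb{S}$. By Definition~\ref{def:potential}, the spin potential is defined so that
\[
e^{-\mathcal{U}(\alpha)} = w(\alpha) = \sum_{n \in \mathbb{Z}} e^{-\mathcal{V}(n)} e^{i n \alpha},
\]
and the positive definiteness assumption together with \eqref{eq:Vsum} guarantees that $w$ is a positive, continuous function on $\mathbb{S}$. Replacing $\mathcal{V}$ by the rescaled potential $\mathcal{V}_\beta$ (dual to $\beta \mathcal{U}$), the same identity gives
\[
e^{-\beta \mathcal{U}(\alpha)} = \sum_{n \in \mathbb{Z}} e^{-\mathcal{V}_\beta(n)} e^{i n \alpha},
\]
so that $(e^{-\mathcal{V}_\beta(n)})_{n \in \mathbb{Z}}$ is exactly the sequence of Fourier coefficients of the continuous positive function $e^{-\beta \mathcal{U}}$ on $\mathbb{S}$.

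Next, I would use the standard fact that for two absolutely summable sequences $a, b \in \ell^1(\mathbb{Z})$ with Fourier series $A, B$, the convolution $a * b$ has Fourier series $A \cdot B$. Applying this iteratively $k-1$ times to $a = e^{-\mathcal{V}_{\beta/k}}$ (which is absolutely summable by \eqref{eq:Vsum}), the sequence $(e^{-\mathcal{V}_{\beta/k}})^{*k}$ has Fourier series
\[
\bigl( e^{-(\beta/k)\mathcal{U}(\alpha)} \bigr)^k = e^{-\beta \mathcal{U}(\alpha)}.
\]
But this is precisely the Fourier series of $e^{-\mathcal{V}_\beta}$ from the previous paragraph. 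By uniqueness of Fourier coefficients for continuous functions (or equivalently by Fourier inversion), the two sequences must coincide, which yields
\[
e^{-\mathcal{V}_\beta(n)} = \bigl(e^{-\mathcal{V}_{\beta/k}}\bigr)^{*k}(n) \qquad \text{for every } n \in \mathbb{Z}.
\]

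The only thing that needs checking is that all the objects involved are well defined: positive definiteness of $-\mathcal{U}$ ensures that $\beta\mathcal{U}$ and $(\beta/k)\mathcal{U}$ both give rise to legitimate spin potentials with positive weight $w$, so that the dual potentials $\mathcal{V}_\beta$ and $\mathcal{V}_{\beta/k}$ are well-defined via Fourier inversion and \eqref{eq:Vsum} continues to hold for them. I do not expect a real obstacle here; the entire content of the lemma is the transcription of the elementary identity ``Fourier$(f^k) = $ Fourier$(f)^{*k}$'' into this setting, and the main (minor) care is just to verify that the summability/positivity conditions propagate through rescaling of $\beta$ so that Fourier inversion is applicable on both sides.
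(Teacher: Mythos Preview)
Your proof is correct and follows essentially the same approach as the paper, which simply remarks that the identity follows from ``basic properties of the Fourier transform'' (convolution on $\mathbb{Z}$ becoming multiplication on $\mathbb{S}$). Your write-up is in fact more detailed than the paper's, which does not spell out the summability and positivity checks you mention.
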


Operation (b) will make use of Theorem \ref{T: var_increasing_height}. 
Let $v_1, v_2$ be two vertices in the graph, 
with or without an edge between them and add to the graph the edge $g = \{ v_1, v_2\}$ with the XY potential $\c{V}_{\lambda}(k) = -\log(I_k(\lambda))$ 
with parameter $\lambda$. 
Write $\nu_{N, \beta, \lambda, \exact}$ for the corresponding height function measure on $G_N^*$. 
We will show now that gluing the vertices $v_1, v_2$ corresponds to sending $\lambda$ to $0$ in this setting. 
Indeed, as $\lambda \to 0$, we have
\[
	e^{-\c{V}_\lambda(k)} = I_k(\lambda) \to \begin{cases}
		1, & \text{if } k = 0\\
		0, &\text{else }
	\end{cases} 
\]
which means that the height function measure $\nu_{N, \beta, 0, \exact}$ is supported on height functions with $h_{v_1} = h_{v_2}$. 
Moreover, Theorem \ref{T: var_increasing_height} implies that for any vertex $x$ of $G_N^*$, 
\[
	\frac{\partial}{\partial \lambda} \nu_{N, \beta, \lambda, \exact}(h_x^2) \geq 0, 
\]
so that we find the following result. 

\begin{lemma}[Gluing vertices] \label{L: Gluing vertices}
	Let $x, v_1, v_2 \in V$ and $H_N^*$ be obtained from $G_N^*$ by gluing together $v_1$ and $v_2$. Then
	\(
		\nu_{H_N, \beta, \exact}[h_x^2] \leq \nu_{G_N, \beta, \exact}[h_x^2]. 
	\)
\end{lemma}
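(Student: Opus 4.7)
The strategy is to interpolate between the two graphs via a single real parameter and apply the monotonicity statement of Theorem~\ref{T: var_increasing_height}. Form the augmented graph $\tilde G_N := G_N^* \cup \{g\}$ by adjoining an extra edge $g = \{v_1, v_2\}$ (as a parallel edge if one is already present), and equip $g$ with the XY potential $\c{V}_\lambda(k) = -\log I_k(\lambda)$ at parameter $\lambda \geq 0$; leave every other edge with its original potential and parameter $\beta_e$. Write $\nu^\lambda$ for the resulting height function measure. Since $-\c{U}_g(\J) = \cos(\J)$ is positive definite, the family $(\nu^\lambda)_{\lambda \geq 0}$ fits exactly into the setup of Section~\ref{sec:setup2} with $\beta_g = \lambda$.

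The two endpoints have explicit probabilistic meaning. At $\lambda = 0$ we have $e^{-\c{V}_0(k)} = I_k(0) = \mathbf{1}\{k=0\}$, so $\nu^0$ is supported on configurations with $\n_g = 0$, i.e.\ $h_{v_1} = h_{v_2}$, and the contribution of $g$ to the density is the constant $1$. Identifying $v_1$ and $v_2$, the pushforward of $\nu^0$ onto heights on $H_N^*$ is exactly $\nu_{H_N, \beta, \exact}$. At the other end, I claim that $\nu^\lambda[h_x^2] \to \nu_{G_N, \beta, \exact}[h_x^2]$ as $\lambda \to \infty$. The key input is the Bessel inequality $0 \leq I_k(\lambda) \leq I_0(\lambda)$ for every integer $k$ and every $\lambda \geq 0$, which is immediate from the integral representation
\[
I_k(\lambda) = \frac{1}{\pi} \int_0^\pi e^{\lambda \cos\theta} \cos(k\theta)\, d\theta,
\]
together with the pointwise limit $I_k(\lambda)/I_0(\lambda) \to 1$ for each fixed $k$. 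Dividing numerator and denominator of $\nu^\lambda[h_x^2]$ by $I_0(\lambda)$ and invoking dominated convergence with summable envelope $\n \mapsto h_x^2 \prod_{e \in E(G_N^*)} e^{-\c{V}_{\beta_e}(\n_e)}$ (summability following from \eqref{eq:Vsum} and finiteness of the graph) yields the desired limit.

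Given the two endpoint identifications, the proof concludes by applying Theorem~\ref{T: var_increasing_height} on $\tilde G_N$ with $\beta_g = \lambda$: the function $\lambda \mapsto \nu^\lambda[h_x^2] = \nu^\lambda[(h_x - h_\partial)^2]$ is non-decreasing, hence
\[
\nu_{H_N, \beta, \exact}[h_x^2] = \nu^0[h_x^2] \leq \lim_{\lambda \to \infty} \nu^\lambda[h_x^2] = \nu_{G_N, \beta, \exact}[h_x^2].
\]
The main obstacle is the $\lambda \to \infty$ step: the potentials $\c{V}_\lambda$ themselves blow up and do not have a clean limit within the setup of Section~\ref{sec:setup2}, so one cannot simply pass to a weak limit of measures. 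The Bessel ratio bound $I_k \leq I_0$ is the natural device that circumvents this, providing an integrable envelope on the edge $g$ and turning the monotonicity statement into the desired comparison between two genuine height-function measures.
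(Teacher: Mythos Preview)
Your proof is correct and follows essentially the same route as the paper: add an auxiliary XY edge $g$ between $v_1$ and $v_2$ with parameter $\lambda$, identify $\lambda=0$ with the glued graph, identify $\lambda\to\infty$ with the original graph, and invoke the monotonicity of Theorem~\ref{T: var_increasing_height} in $\beta_g=\lambda$. The paper's argument is terser---it records the $\lambda=0$ endpoint explicitly and relies on the $\lambda\to\infty$ convergence already noted in the proof of Theorem~\ref{T: var_increasing_height} (there phrased on the spin side as $\mu_{\beta,\lambda,\coclosed}\to\mu_{\beta,\coclosed}$)---whereas you justify that limit directly on the height-function side via the Bessel bound $I_k\le I_0$ and dominated convergence, which is a clean and self-contained way to handle it.
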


To summarise, we have established that gluing two vertices together reduces the variance of the height function, and splitting edges as in Lemma \ref{L: splitting edges} does not change the model. 
These two facts together imply that we can modify $\Gamma$ to obtain a planar graph $\Gamma'$ of degree at most three as we will explain now. 
We first show how to go from any planar graph to a planar graph of degree at most three. 

\subsubsection*{Star-tree transform}
There are many ways to transform a planar graph into a planar graph with degree at most three.
We follow here the elegant idea presented in \cite{GurNach}, where it was (implicitly) stated for the Gaussian free field. 
 Suppose that $G = (V, E)$ is a planar graph with boundary $\partial \in V$ and take the setup of Section \ref{sec:setup2}. Fix a vertex $v_0 \in V$.
It will be slightly more convenient to make a distinction between the number of neighboring vertices of $v_0$ and its degree in multigraphs.
\\
\\
\noindent \textbf{Degree reduction algorithm at $v_0$.} 
\begin{enumerate}[\hspace{0.4cm}1.]
	\item If the number of neighbours of $v_0$ is strictly less than $4$, do nothing.
	\item Label all neighbors of $v_0$ by $v_1, \ldots, v_{2d}$ by starting somewhere and going clockwise around $v_0$, where we \emph{do not} include the last vertex if the number of neighbours is odd. 
	\item Add to each edge $v_0v_i$ an intermediate vertex $x_i$ (note that if there are multiple edges between $v_0$ and $v_i$, then we have created many new vertices). 
	\item put the potential $\c{V}_{\beta_{v_0v_i} / 2}$ on the edges $v_0x_i$ and $x_iv_i$, for each $i$. 
	\item Glue together each pair $x_{2i - 1}$ and $x_{2i}$ (this includes gluing together multiple vertices $x_i$ if they exist). 
\end{enumerate}

Note that this algorithm reduces the \emph{number of neighbors of} $v_0$ by a factor $2$ if this number is even. 
Also note that it creates a multigraph. 
From the splitting and gluing lemmas, we obtain the next result. 
\begin{lemma}\label{L: star-tree transform}
	Applying the degree reduction algorithm at $v_0$ does not increase the variance of $h_x$ for any $x \in V$. 
\end{lemma}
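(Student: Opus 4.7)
The plan is to decompose the degree reduction algorithm at $v_0$ into a sequence of moves whose individual effect on the variance of $h_x$ is already controlled by Lemmas \ref{L: splitting edges} and \ref{L: Gluing vertices}. The algorithm was designed exactly so that these two lemmas do all the work.

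First, I would observe that Steps 3 and 4 of the algorithm are precisely the edge-splitting operation of Lemma \ref{L: splitting edges} applied with $k = 2$ to each edge $v_0 v_i$. Indeed, the inserted intermediate vertex $x_i$ carries potentials $\c{V}_{\beta_{v_0 v_i}/2}$ on both halves, and Lemma \ref{L: splitting edges} says that
\[
e^{-\c{V}_{\beta_{v_0 v_i}}} = \bigl(e^{-\c{V}_{\beta_{v_0 v_i}/2}}\bigr)^{*2},
\]
so marginalising out $h_{x_i}$ recovers exactly the original edge potential. Consequently after Steps 3 and 4 the joint law of $(h_v)_{v \in V}$ is identical to the original one, and in particular $\nu_{\beta, \exact}[h_x^2]$ is unchanged for every $x \in V$.

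Next, Step 5 identifies the intermediate vertices pairwise. Each identification of a pair $x_{2i-1}, x_{2i}$ is a direct application of Lemma \ref{L: Gluing vertices}, which asserts that gluing two vertices does not increase the variance of $h_x$. Iterating once per pair and chaining the resulting inequalities shows that the variance in the graph produced by the algorithm is no larger than in the original graph.

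I do not anticipate any real obstacle beyond a small bookkeeping check that the successive graphs remain in the setup of Section \ref{sec:setup2}: the only potentials produced by the algorithm are rescalings of the original positive definite $-\c{U}$, so $-\c{U}_e$ stays positive definite on every edge, and the resulting (finite, possibly multi-) graph is admissible for Lemma \ref{L: Gluing vertices} since the splitting and gluing lemmas were both stated for arbitrary finite graphs.
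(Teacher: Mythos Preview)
Your proposal is correct and matches the paper's approach exactly: the paper simply states ``From the splitting and gluing lemmas, we obtain the next result'' before the lemma, and your argument spells out precisely that decomposition --- Steps~3--4 are the $k=2$ case of Lemma~\ref{L: splitting edges} (variance preserved), and Step~5 is iterated application of Lemma~\ref{L: Gluing vertices} (variance not increased).
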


Thus, to reduce the number of neighbors of $v_0$ to $3$ or less, we are left to apply the reduction algorithm inductively, 
and to get a graph of degree three we apply it to all vertices in $G$ other than the boundary vertex. 

To finalize the star-tree transform, we still need to transform the multi-graph into a simple graph. Of course, 
we need to do so without changing the height function model. 
If $e_1, e_2$ are two edges with the same end-points $x$ and $y$ then
\begin{align} \label{eq:adding potentials}
	\nu_{N, \beta, \exact}(h_x - h_y = k) \propto e^{-\c{V}_\beta(k)} e^{-\c{V}_\beta(k)} = e^{-(\c{V}_{\beta} + \c{V}_{\beta})(k)}. 
\end{align}
This observation implies that applying inductively the reduction algorithm and then applying the above observation does result in a graph where:
\begin{enumerate}[(i)]
	\item the number of neighbors of each vertex is less than or equal to $3$,
	\item the variance of the height function is not increased, 
	\item the potentials are of the form $D\c{V}_{\beta / k}$ for some $D$ and $k$ that can depend on the edges.  
\end{enumerate}

\begin{center}
	\begin{figure}
		\begin{subfigure}{.15\textwidth}
			\centering
			\includegraphics[scale =0.20]{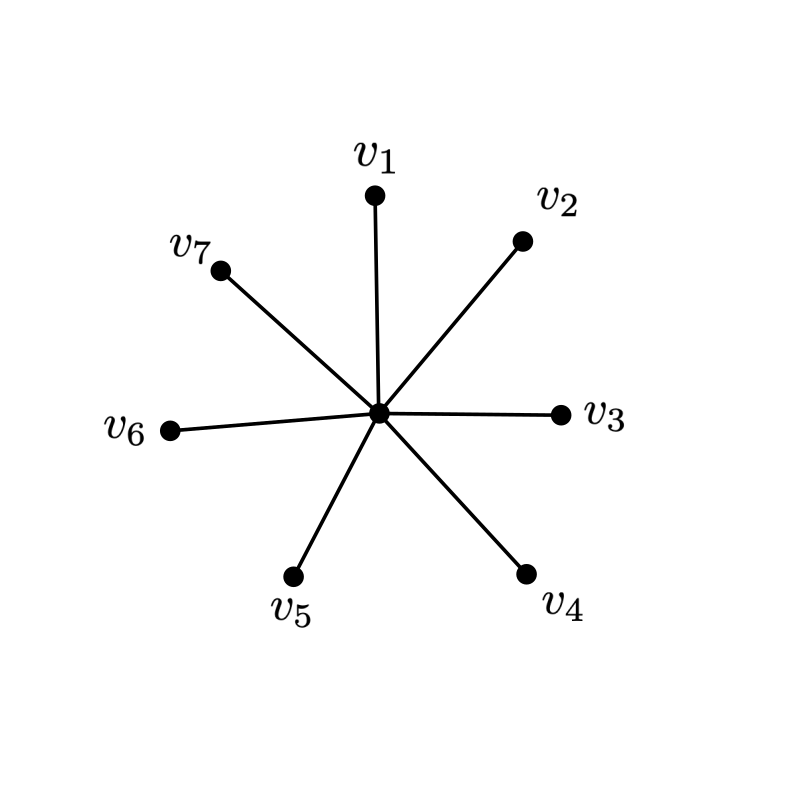}
		\end{subfigure} \hspace{1.0cm}
		\begin{subfigure}{.15\textwidth}
			\centering
			\includegraphics[scale =0.20]{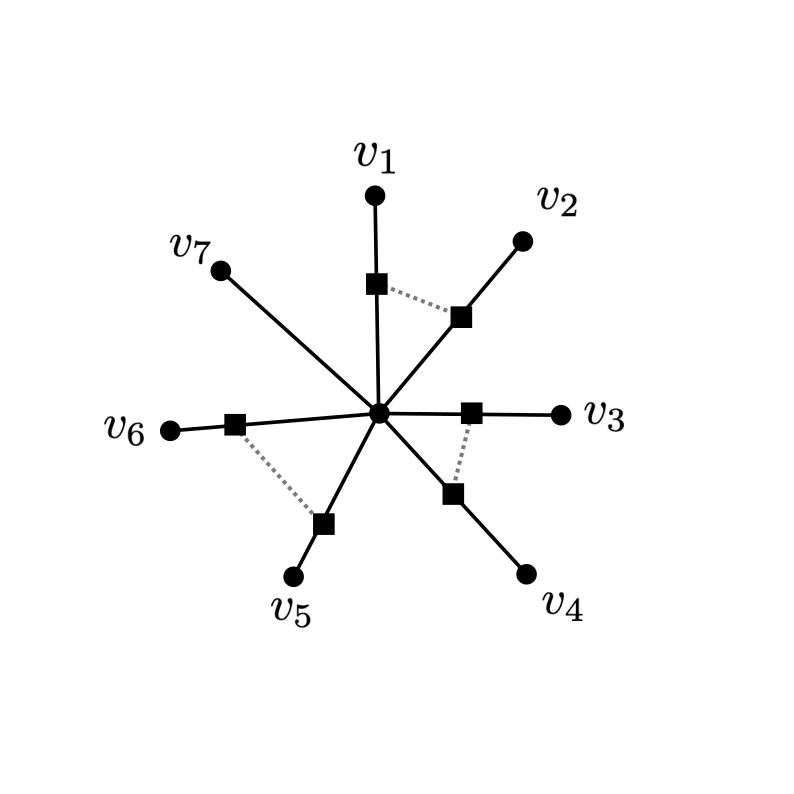} 
		\end{subfigure}  \hspace{1.0cm}
		\begin{subfigure}{.15\textwidth}
			\centering
			\includegraphics[scale =0.20]{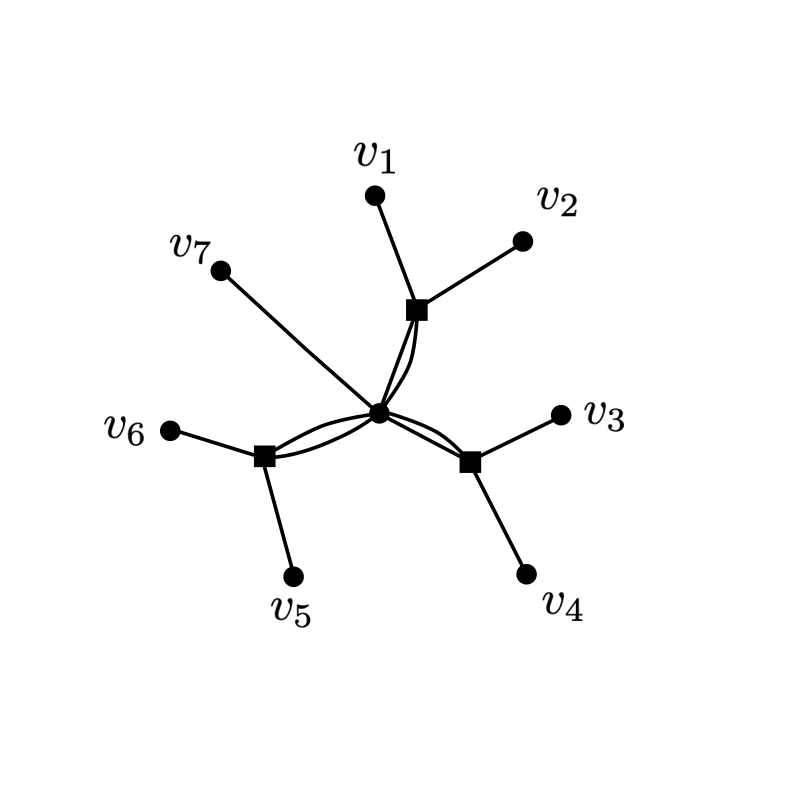} 
		\end{subfigure} \hspace{1.0cm}
		\begin{subfigure}{.15\textwidth}
			\centering
			\includegraphics[scale =0.20]{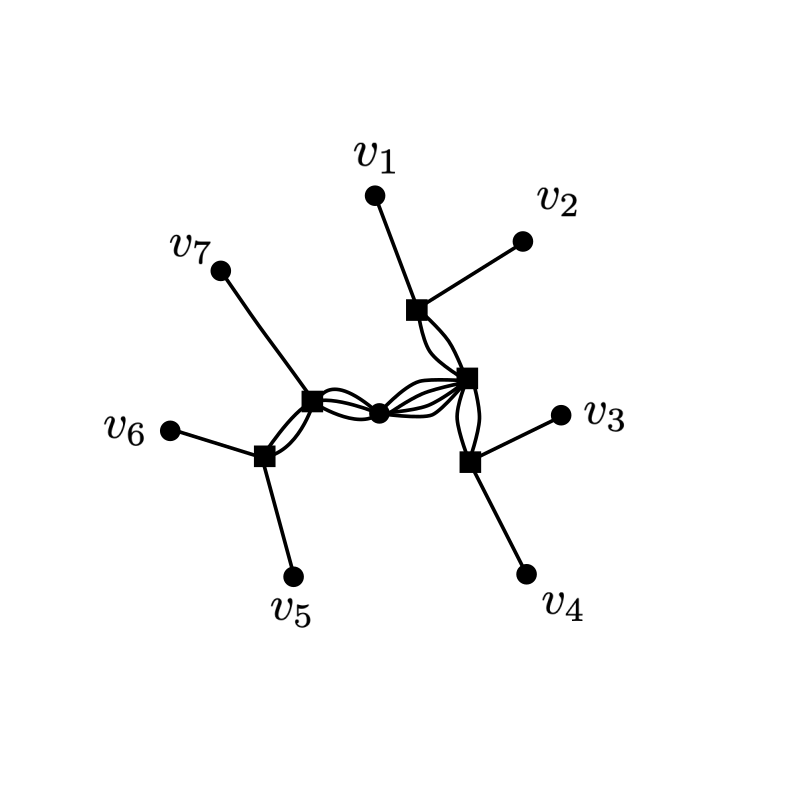} 
		\end{subfigure} \hspace{1.0cm}
		\caption{Left: original graph with $v_0$ in the center. 
			Middle two: first step of the degree-reduction algorithm, dotted lines correspond to vertices to be glued together. 
			Right: Final graph after ``star--tree'' transform, with vertices glued together and all vertices have three or less neighbors.}
		\label{f:star-tree}
	\end{figure}
\end{center} 

\subsubsection{Proof of Theorem \ref{T: deloc}}
Before we finish the proof of Theorem \ref{T: deloc}, let us briefly mention how to go from a ``roughly planar'' graph to a planar graph, see also Figure \ref{f:long_range}. 
We will do so for $\ZZ^2$ where $x \sim y$ if $|x - y|_2 \leq 2$. 
Add to an edge connecting two vertices $x$ and $y$ that are at distance $2$ from eachother a new vertex. 
Glue it to the unique vertex between $x$ and $y$ that is at distance $1$ of each. 
Apply this algorithm to all edges. The obtained graph is planar and the variance of the height function is not increased by Lemma \ref{L: Gluing vertices}. 

\begin{proof}[Proof of Theorem \ref{T: var_increasing_height}]
	Consider the setup of Section \ref{sec:setup2}. 
	By Lemma \ref{L: pure-pot reduction}, we can assume without loss of generality that $-\c{U}(\J) = \cos(i\J)$.  
	Write $\c{V}_{\beta}$ for the corresponding height function potential. 
	
	Let $\Gamma'$ be the planar graph obtained from $\Gamma$ as in Section \ref{sec: graph mod}, with the corresponding potentials $D_e\c{V}_{\beta / k_e}$, 
	$D_e \in (0, \infty)$ and $k_e \in \NN$. 
	Although $D_e, k_e$ may be different on distinct edges, they are uniformly bounded because $\Gamma$ (and hence $\Gamma'$) is invariant a bi-periodic lattice action. 
	
	Let $(G_N)_{N \geq 1}$ be any exhaustion of $\Gamma$ and let $(G_N')_{N \geq 1}$ be the induced exhaustion of $\Gamma'$, obtained from applying the degree reduction algorithm to all of $G_N$ (but not the boundary vertex). 
	Write $\nu'_{N, \beta, \exact}$ for the corresponding height function measure on $G_N'$. 
	It follows from Section \ref{sec: graph mod} that it suffices to prove that for all $\beta$ large enough, $\nu'_{N, \beta, \exact}[h_o^2] \to \infty$. 
	Indeed, in this case we also have $\nu_{N, \beta, \exact}[h_o^2] \to \infty$. 
	
	Note that since $\c{V}_{\beta / k}$ is convex, so is any multiple. 
	Moreover, for each $D \in (0, \infty)$ and $k \in \NN$ we have that for all $\beta$ large enough, 
	\[
		D\c{V}_{\beta / k}(0) \leq D\c{V}_{\beta / k}(1) + \log(2). 
	\]
	Indeed, this follows from the fact that the modified Bessel function satisfies $I_{m}(\beta) / I_{m'}(\beta) \to 1$ as $\beta \to \infty$ (see e.g.~\cite{vEnLis}). 
	Therefore, we can apply Theorem \ref{T: Lammers} and Lemma \ref{L: XY convex deloc} to deduce that for $\beta$ large enough, 
	\[
		\nu'_{N, \beta, \exact}[h_o^2] \to \infty
	\]
	as $N \to \infty$. 
	This proves the theorem. 
\end{proof}

\begin{center}
	\begin{figure}
		\begin{subfigure}{.20\textwidth}
			\centering
			\includegraphics[scale =0.20]{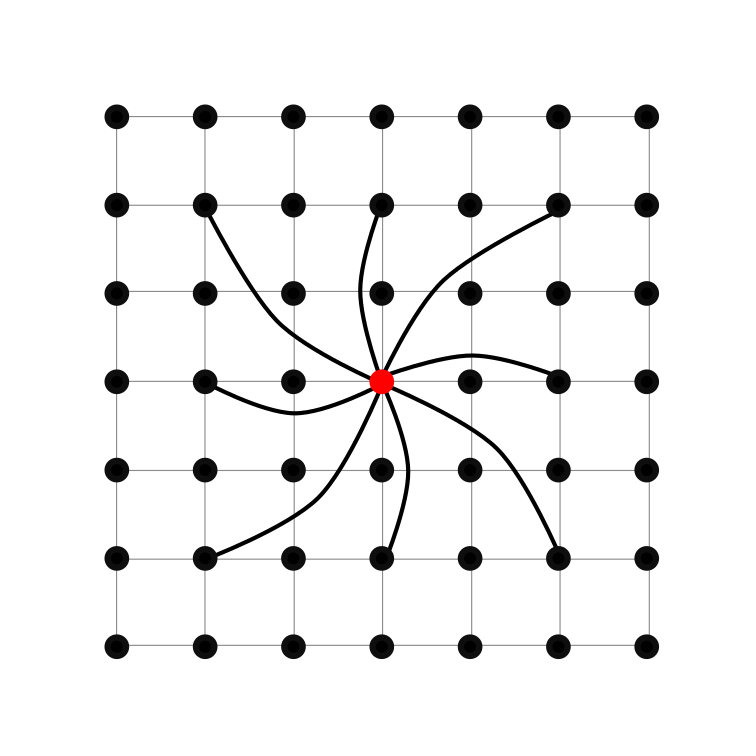}
		\end{subfigure} \hspace{1.5cm}
		\begin{subfigure}{.20\textwidth}
			\centering
			\includegraphics[scale =0.20]{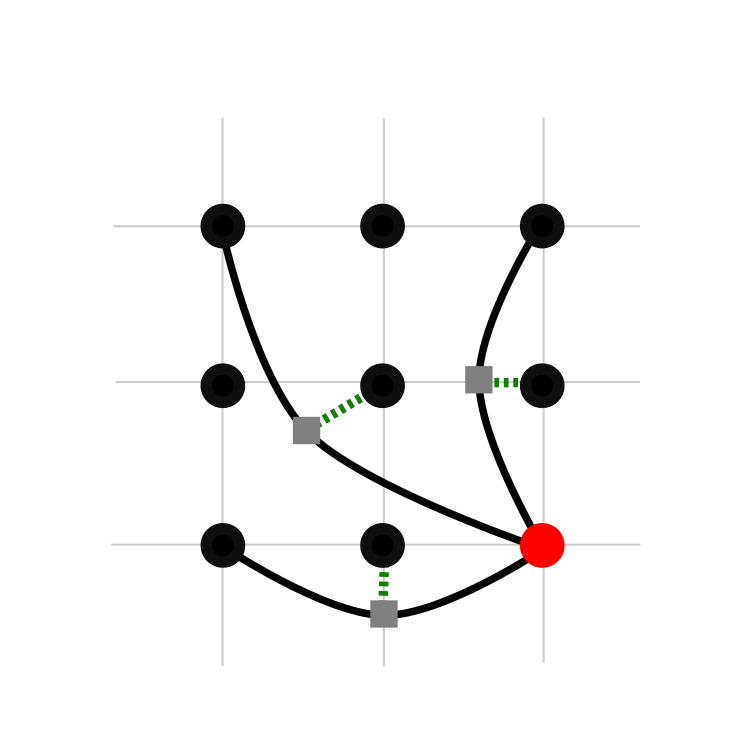} 
		\end{subfigure}  \hspace{1.5cm}
		\begin{subfigure}{.20\textwidth}
			\centering
			\includegraphics[scale =0.20]{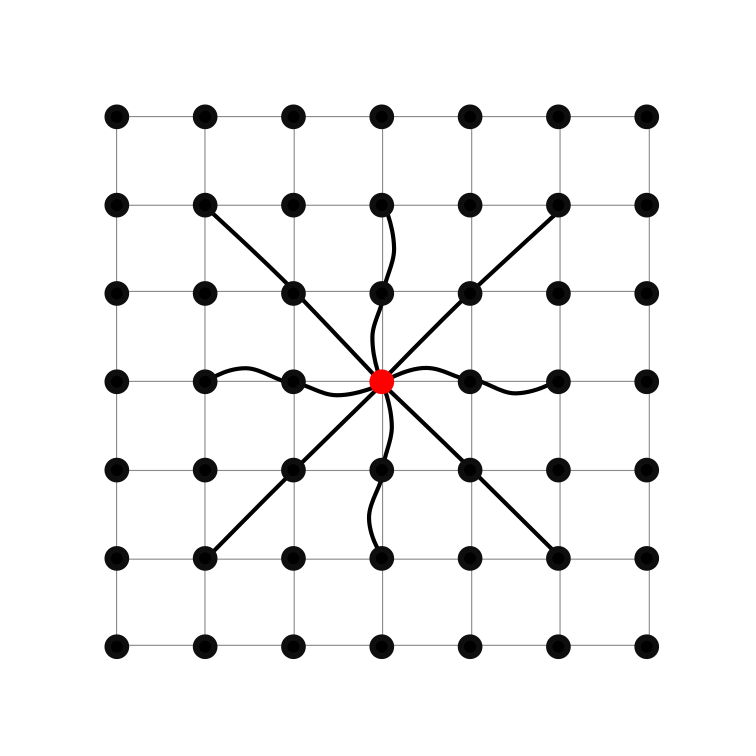} 
		\end{subfigure} \hspace{1.0cm}
		\caption{Left: an example of $\ZZ^2$ with long-range interactions; only the edges of the (red) origin are drawn. Middle: gluing. The square (gray) vertices are added, together with the green edges where the gluing will happen. Right: the final (planar) graph.}
		\label{f:long_range}
	\end{figure}
\end{center} 

\section{GFF covariance for a projection of the spin model.} \label{sec:projection}
Let $G = (V, E)$ be a finite graph. Recall that $\Omega^1(\RR)$ equipped with the $l^2$-inner product
\(
	(\epsilon, \omega)_{\Omega^1} 
\)
is a Hilbert space. In this setting, the linear operators $\d$ and $\d^*$ are mutually adjoint, and hence
the spaces $\Hcycle(\RR)$ and $\Hstar(\RR)$ are orthogonal in $\Omega^1(\RR)$ and span the whole space, i.e.,
\[
\Omega^1(\RR) = \Hcycle(\RR) \oplus \Hstar (\RR). 
\]
We denote by $P_{\coclosed}$ and $P_{\exact}$ the orthogonal projection onto $\Hstar(\RR)$ and $\Hcycle(\RR)$ respectively. 

We focus on finite graphs $G = (V, E)$ with boundary vertex $\partial \in V$ and 
take mutually dual potentials $\c{V}$ and $\c{U}$ as in Definition \ref{def:potential}. 

Since $\c{U}$ is symmetric around $0$ by assumption, the derivative $\c{U}'$ of $\c{U}$ is odd and hence $\c{U}'(\J)$ is a $1$-form in $\Omega^1(\RR)$. 
It thus makes sense to look at the orthogonal decomposition of $\c{U}'(\J)$ in the space $\Hcycle (\mathbb R) \oplus \Hstar (\mathbb R)$. 
Define $\tau$ to be the unique element of $\Omega^0_0(\mathbb R)$ such that 
\[
	\d \tau = P_{\exact}\c{U}'(\J).
\]
We will next obtain the -- in our eyes somewhat remarkable -- result that $\tau$ has the covariance of a Gaussian free field 
irrespective of $\c{U}$. 

\begin{proposition}[GFF covariance] \label{P: projected-Gaussian}
	Let $\tau$ be as above and $ f,g \in \Omega^0_0(\RR)$. Then
	\[
			\inf_{e \in E} \mu_{G, \exact}[\c{U}''(\J_e)] (f, \Green g) \leq \mu_{G, \exact}[(\tau, f)(\tau, g)] \leq \sup_{e \in E} \mu_{G, \exact}[\c{U}''(\J_e)] (f, \Green g). 
	\]
\end{proposition}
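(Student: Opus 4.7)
The strategy is to apply the covariance duality of Lemma~\ref{L: covariance} a single time, with test $1$-forms chosen inside $\Hcycle(\RR)$, and to exploit the orthogonal decomposition $\Omega^1(\RR)=\Hcycle(\RR)\oplus\Hstar(\RR)$ in order to isolate exactly the contribution of $\tau$ on the spin side and to kill the height-function term.

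First, I would set
\[
\epsilon := \d\Delta^{-1}f, \qquad \omega := \d\Delta^{-1}g,
\]
both elements of $\Hcycle(\RR)$, which by construction satisfy $\d^*\epsilon=f$ and $\d^*\omega=g$. Second, I would apply Lemma~\ref{L: covariance} with $\#=\coclosed$ to this pair. On the height-function side, $\n$ is supported in $\Hstar(\ZZ)\subset\Hstar(\RR)$, which is orthogonal to $\Hcycle(\RR)$, so $(\n,\epsilon)=(\n,\omega)=0$ and that term vanishes identically. On the spin side, using self-adjointness of $P_\exact$, the fact that $\epsilon\in\Hcycle$ forces $(P_\coclosed\c{U}'(\J),\epsilon)=0$, and adjointness of $\d$ and $\d^*$:
\[
(\c{U}'(\J),\epsilon)_{\Omega^1}=(P_\exact\c{U}'(\J),\epsilon)_{\Omega^1}=(\d\tau,\epsilon)_{\Omega^1}=(\tau,\d^*\epsilon)_{\Omega^0}=(\tau,f)_{\Omega^0},
\]
and analogously $(\c{U}'(\J),\omega)=(\tau,g)$. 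Substituting back into Lemma~\ref{L: covariance} gives the clean identity
\[
\mu_{G,\exact}\bigl[(\tau,f)(\tau,g)\bigr]=\sum_{e\in E}\mu_{G,\exact}[\c{U}''(\J_e)]\,\epsilon_e\omega_e.
\]

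Third, to match the statement of the proposition I would compute
\[
\sum_{e\in E}\epsilon_e\omega_e=(\d\Delta^{-1}f,\d\Delta^{-1}g)_{\Omega^1}=(\Delta^{-1}f,g)_{\Omega^0}=(f,\Green g),
\]
using $\Delta^{-1}=D^{-1}\Green$ for simple random walk killed at $\partial$ together with reversibility, exactly as in the derivation preceding Corollary~\ref{cor:upper}. Pulling the coefficients $\mu[\c{U}''(\J_e)]$ out of the sum between $\inf_e$ and $\sup_e$ then yields the two-sided bound.

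The only point requiring care—more a question of reading than an actual obstacle—is that the products $\epsilon_e\omega_e$ need not have a constant sign across edges, so the stated scalar inequality should be understood in the PSD sense for the covariance form of $\tau$. This is justified by first reading the bounds in the diagonal case $f=g$, where each summand $\mu[\c{U}''(\J_e)]\,\epsilon_e^2$ is manifestly controlled by $\sup_e\mu[\c{U}''(\J_e)]$ and $\inf_e\mu[\c{U}''(\J_e)]$ times $\epsilon_e^2$, and then extending bilinearly by polarisation.
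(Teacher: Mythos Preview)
Your approach is essentially identical to the paper's: it too applies Lemma~\ref{L: covariance} with exact test $1$-forms (the paper uses $\d f,\d g$ and substitutes $f\to\Delta^{-1}f$ afterwards, you substitute first and call them $\epsilon,\omega$), kills the height-function term via the orthogonality $\Hstar\perp\Hcycle$, and rewrites $(\c{U}'(\J),\epsilon)=(\tau,f)$ to arrive at the same exact identity $\mu[(\tau,f)(\tau,g)]=\sum_e\mu[\c{U}''(\J_e)]\epsilon_e\omega_e$. Your closing caveat is well taken---the stated two-sided bound only makes sense as an operator/PSD inequality, i.e.\ the diagonal case $f=g$---but note that polarisation recovers the bilinear \emph{form} from the quadratic form, not a scalar off-diagonal inequality from a diagonal one, so that last clause should be read as ``the PSD statement \emph{is} the diagonal statement'' rather than as an extension step.
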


We begin with an easy consequence of the duality lemma for covariance \ref{L: covariance}.

\begin{lemma}\label{L: spin Guassian dom}
	For any $f,g\in \Omega^0_0(\mathbb R)$, we have	
	\[
		\mu_{G, \exact}[(\c{U}'(\J), \d g)(\c{U}'(\J), \d f)] = \sum_{e \in E}\mu_{G, \exact}[\c{U}''(\J_e)]\d f_e \d g_e
	\]
\end{lemma}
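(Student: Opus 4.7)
The plan is to derive this as an essentially immediate consequence of the covariance duality identity (Lemma~\ref{L: covariance}) by choosing the test $1$-forms $\epsilon$ and $\omega$ to be exact, which kills the height-function side of the duality.

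More precisely, I would apply Lemma~\ref{L: covariance} with $\# = \coclosed$ (so that $-\# = \exact$ on the spin side, matching the statement), and with the choices $\epsilon = \d g$ and $\omega = \d f$. This yields
\[
\nu_{\coclosed}\big[(\n, \d g)(\n, \d f)\big] + \mu_{\exact}\big[(\c{U}'(\J), \d g)(\c{U}'(\J), \d f)\big] = \sum_{e \in E} \mu_{\exact}[\c{U}''(\J_e)] \, \d g_e \, \d f_e.
\]
The task then reduces to checking that the first term vanishes.

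This vanishing is where the choice of $\# = \coclosed$ is essential. Any $\n \in H_{\coclosed}(\mathbb{Z})$ satisfies $\d^*\n = 0$ by definition, and viewing $\n$ as a real-valued $1$-form (as is implicitly done in the inner product with $\d g \in \Omega^1(\RR)$), the adjointness relation \eqref{eq:adjoint} gives
\[
(\n, \d g)_{\Omega^1} = (\d^*\n, g)_{\Omega^0} = 0.
\]
Hence $(\n,\d g)(\n,\d f) = 0$ pointwise under $\nu_{\coclosed}$, and the identity in the statement follows immediately.

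There is really no obstacle: the only thing to verify carefully is that the discrete integration-by-parts formula applies when the $1$-form is integer-valued, which is automatic since $\mathbb Z \subset \mathbb R$ and both sides of \eqref{eq:adjoint} are $\mathbb R$-bilinear. The whole content of the lemma is that exact test $1$-forms see only the $\Hstar$-component of $\c{U}'(\J)$ on the spin side, while annihilating the $\Hcycle(\mathbb Z)$-valued height gradient on the dual side.
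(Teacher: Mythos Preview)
Your proof is correct and follows essentially the same approach as the paper: apply Lemma~\ref{L: covariance} with $\# = \coclosed$ and $\epsilon = \d g$, $\omega = \d f$, and observe that the $\nu_{\coclosed}$-term vanishes because $\n \in \Hstar(\ZZ)$ is orthogonal to the exact $1$-forms $\d f, \d g \in \Hcycle(\RR)$. The paper phrases the vanishing via the orthogonal decomposition $\Omega^1(\RR) = \Hcycle \oplus \Hstar$, while you unpack this via the adjointness relation $(\n, \d g) = (\d^*\n, g) = 0$, but these are the same observation.
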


\begin{proof}
	Let $f,g$ be as in the statement so that $\d f, \d g\in \Hcycle$. 
	Lemma \ref{L: covariance} implies
	\[
		\mu_{G, \exact}[(\c{U}'(\J), \d f)(\c{U}'(\J), \d g)] = \sum_{e \in E} \mu_{G, \exact}[\c{U}''(\J_e)]  \d f_e \d g_{e} - \nu_{G, \coclosed}[(\n, \d f)(\n, \d g)]
	\]
	Since $\Hcycle$ and $\Hstar$ are orthogonal, and the dual height $1$-form $\n$ takes value in $\Hstar$, the right-most term vanishes and the result follows. 
\end{proof}

\begin{proof}[Proof of Proposition \ref{P: projected-Gaussian}]
	For any $g \in \Omega^0(\mathbb R)$, we have 
	\[
		(\c{U}'(\J), \d g) = (P_{\exact}\c{U}'(\J), \d g) = (\tau, \Delta g)
	\] 
	where the first equality holds because $\d g \in \Hcycle$ and the second by definition of $\tau$, the duality of $\d$ and $\d^*$ and because $\Delta = \d^*\d$. 
	As before, $\Green$ denotes the inverse of $\Delta$ (so defined that functions take value $0$ at the boundary) and take now $g = \Green m$, $h = \Green f$. 
	It follows from this and the previous lemma that 
	\[
		\mu_{G, \exact}[(\tau, m)(\tau, f)] =  \sum_{e \in E}\mu_{G, \exact}[\c{U}''(\J_e)] \d f_e \d g_e, 
	\]
	implying the desired result. 
\end{proof}

\section{A central limit theorem} \label{sec:CLT}
Here we consider the same setup as in Section~\ref{sec:setup}, and we establish a central limit theorem for $\mathcal U'(J)$ summed over the path~$p_n$.
The main conclusion of this section is that even though the decay of the correlations of $\mathcal U'(J)$ changes if the height function delocalises, $\mathcal U'(J)$ always satisfies a central limit theorem as shown below. 

Let $(N_k)_{k\geq1}$ be a sequence along which $\mu_{\mathbb{T}_N, \coclosed}$ converges weakly to a measure $\mu=\mu_{\mathbb{T}_N}$. As usual, by duality, $\mu$ can be thought of as a Gibbs measure on $H_\exact(\mathbb Z^2, \mathbb S)$.
By Remark~\ref{rem:tightness}, for any fixed $n$, the difference $h_{v_0}-h_{v_n}$ converges weakly under $\nu_{\mathbb{T}_N, \exact}$, as $N\to \infty$ (as long as $v_0,v_n\in \mathbb{T}_N$). Moreover by Corollary~\ref{cor:upper},
\[
	\lim_{n\to \infty} \limsup_{k\to \infty} \nu_{\mathbb{T}_{N_k}, \exact} [(h_{v_0}-h_{v_n})^2]/{n} \leq \lim_{n\to \infty}c \frac{ \log n }{n}= 0
\] 
for some $c<\infty$, and hence by Lemma~\ref{lem:duality}, for all $t\in \mathbb R$,
\begin{align}\label{eq:phi1}
	1= \lim_{n\to \infty} \lim_{k\to \infty} \nu_{\mathbb{T}_{N_k}, \exact}\big [\exp \big(i\tfrac t {\sqrt n} ({h_{v_0}-h_{v_n}})\big)\big]= \lim_{n\to \infty}\lim_{k\to \infty}{Z_{\mathbb{T}_{N_k},\coclosed}\big (\tfrac t  {\sqrt n}p_n\big)}/{Z_{\mathbb{T}_{N_k},\coclosed}(0)},
\end{align}
where again we identify the path $p_n$ with the associated 1-form.
Using that 
\[
\mathcal U(J+\varepsilon)-\mathcal U( J)= \mathcal U'(J)\varepsilon +\tfrac 12 \mathcal U''(J)\varepsilon^2 +o(\varepsilon^2)
\]
we can write
\begin{align*}
{Z_{\mathbb{T}_{N_k},\coclosed}\big (\tfrac t  {\sqrt n}p_n\big)}/{Z_{\mathbb{T}_{N_k},\coclosed}(0)} 
&= \mu_{\mathbb{T}_{N_k}, \coclosed} \Big [\exp \Big(-t\frac{1}{\sqrt n}\sum_{i=1}^n  \mathcal U' (J_i)  - {t^2} \frac {1}{2n} \sum_{i=1}^n \mathcal U''(J_i)+o(t^2)\Big)\Big],
\end{align*}
where the error is uniform in $k$.
We first note that by weak convergence, as $k\to \infty$, the right-hand side approaches to the same expectation but with respect to $\mu$ (the infinite volume limit of $\mu_{\mathbb{T}_N, \exact}$).
Moreover, the error therm vanishes in the limit $n\to\infty$. Assuming that the spin measure $\mu$ is ergodic, we also have that
\[
\frac 1n \sum_{i=1}^n \mathcal U'' (J_i)\to \mu[\mathcal U'' (J_0)] \quad \mu\textnormal{-a.s. as } n\to \infty
\] 
by Birkhoff's pointwise ergodic theorem (since $J$ is invariant under the shift along the path, and $\mathcal U''$ is bounded). 
Note that in the case of the XY model, there is only one translation invariant Gibbs measure in two dimensions \cite{MMSP} which must therefore be ergodic. 
By the dominated convergence theorem and \eqref{eq:phi1}, we conclude the following central limit theorem.
\begin{theorem} If $\mu$ is ergodic, then for any $t\in \mathbb R$,
\begin{align*}
\lim_{n\to \infty} \mu \Big [\exp \Big(-\frac{t}{\sqrt n}\sum_{i=1}^n  \mathcal U' (J_i)\Big)\Big] = \exp\Big(\frac{t^2}2   \mu [\mathcal U'' (J_0)] \Big).
\end{align*}
In particular,
\[
\frac1{\sqrt n}\sum_{i=1}^n  \mathcal U' (J_i)\to \mathcal N(0, \mu [\mathcal U'' (J_0)])
\]
in distribution as $n\to \infty$.
\end{theorem}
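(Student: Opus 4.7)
The plan is to apply Lemma~\ref{lem:duality} with the $1$-form $\epsilon = (t/\sqrt n)\,p_n$ on the finite torus $\mathbb T_{N_k}$, pass to the weak limit $k\to\infty$ (valid by Remark~\ref{rem:tightness}), and then send $n\to\infty$. On the height function side the duality produces
\[
\nu_{\mathbb T_{N_k},\exact}\!\left[\exp\!\bigl(i(t/\sqrt n)(h_{v_n}-h_{v_0})\bigr)\right],
\]
which tends to $1$ in the iterated limit: the GFF bound of Corollary~\ref{cor:upper} on $\mathbb T_{N_k}$, together with the monotone convergence of the killed Green's function on $\mathbb T_{N_k}$ to that on $\mathbb Z^2$ killed at $\mathbf 0$, gives $\nu[(h_{v_n}-h_{v_0})^2]\leq C\log n$, so $(h_{v_n}-h_{v_0})/\sqrt n\to 0$ in $L^2$.

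On the spin side, I would Taylor expand $\mathcal U(J_i + t/\sqrt n) - \mathcal U(J_i)$ to second order. Since $\mathcal U$ is $C^2$ on the compact circle $\mathbb S$, its second derivative is uniformly continuous and the Peano remainder is $o(1/n)$ uniformly in $J_i$; summing over the $n$ edges in $p_n$ yields a deterministic error $o(1)$. Setting $S_n := \sum_{i=1}^n \mathcal U'(J_i)$ and $T_n := \sum_{i=1}^n \mathcal U''(J_i)$, the spin side of the duality becomes
\[
\mu_{\mathbb T_{N_k},\coclosed}\!\left[\exp\!\bigl(-(t/\sqrt n)\,S_n - (t^2/(2n))\,T_n + o(1)\bigr)\right];
\]
the limit $k\to\infty$ passes through because the integrand is a bounded continuous function of $J$ in the product topology, so the identity holds with $\mu$ on the right-hand side.

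Ergodicity of $\mu$ together with boundedness of $\mathcal U''$ gives, by Birkhoff's pointwise ergodic theorem, $T_n/n\to\mu[\mathcal U''(J_0)]$ $\mu$-a.s. Hence $f_n := \exp(-(t^2/(2n))\,T_n)$ is uniformly bounded and converges a.s.\ to the constant $c(t) := \exp(-(t^2/2)\,\mu[\mathcal U''(J_0)])$. Writing $g_n := \exp(-(t/\sqrt n)\,S_n)$, duality has given $\mu[g_n f_n]\to 1$. Running the same argument with $2t$ in place of $t$ shows that $\mu[g_n^2]$ stays uniformly bounded in $n$, whence Cauchy--Schwarz yields
\[
|\mu[g_n(f_n - c(t))]|\leq \mu[g_n^2]^{1/2}\,\mu[(f_n - c(t))^2]^{1/2}\to 0
\]
by dominated convergence. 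Therefore $c(t)\,\mu[g_n]\to 1$, i.e.\ $\mu[g_n]\to\exp((t^2/2)\,\mu[\mathcal U''(J_0)])$, which is the claimed MGF identity; convergence in distribution to $\mathcal N(0,\mu[\mathcal U''(J_0)])$ follows from the standard MGF criterion since the limit MGF is finite on all of $\mathbb R$.

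The main obstacle is the last factorisation: the MGF factor $g_n$ is \emph{not} uniformly bounded in $n$, so one cannot simply pull the a.s.\ limit of $f_n$ out of the expectation. The key point is that duality itself, applied at the doubled parameter $2t$, provides the uniform $L^2(\mu)$ control on $g_n$ that is needed to close the argument.
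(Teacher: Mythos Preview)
Your argument is correct and follows the same skeleton as the paper: apply duality with $\epsilon=(t/\sqrt n)p_n$, use the GFF bound (Corollary~\ref{cor:upper}) to show the height-function side tends to~$1$, Taylor expand on the spin side, and invoke Birkhoff for the $\mathcal U''$ sum. The one place where you go beyond the paper is the final separation of $g_n=\exp(-(t/\sqrt n)S_n)$ from $f_n=\exp(-(t^2/2n)T_n)$: the paper simply writes ``by the dominated convergence theorem and \eqref{eq:phi1}'', whereas you correctly note that $g_n$ is \emph{not} uniformly bounded and instead establish $\sup_n\mu[g_n^2]<\infty$ by running duality at the doubled parameter $2t$, then use Cauchy--Schwarz to conclude $\mu[g_n(f_n-c(t))]\to 0$. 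This $2t$/Cauchy--Schwarz step is a clean way to justify the limit interchange that the paper's one-line appeal to DCT leaves implicit; otherwise the two proofs are the same.
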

\newpage
\appendix

\section{Duality between height functions and spin models} \label{ap:duality}
Let $G = (V, E)$ be a finite graph and write $\vec{E}$ for its oriented edges. Here and in what follows, we will always fix an implicit embedding $E \hookrightarrow \vec{E}$, 
which fixes for each edge a prescribed orientation. The particular embedding chosen is of no importance. 

Recall that the two Hilbert spaces $\Omega^0(\RR), \Omega^1(\RR)$ are equipped with the natural inner products:
\[
(f, g)_{\Omega^0} := \sum_{x \in V} f_x g_x \quad \text{ and } \quad 
	(\epsilon, \omega)_{\Omega^1} := \frac{1}{2}\sum_{\vec{e} \in \vec{E}} \epsilon_{\vec{e}} \:\omega_{\vec{e}}
\]
respectively, and it is standard to see that $\d$ and $\d^*$ are adjoints: for all $f \in \Omega^0(\RR)$ and $\omega \in \Omega^1(\RR)$
\[
	(\d f, \omega)_{\Omega^1} = (f, \d^* \omega)_{\Omega^0}.
\]
We will use the embedding $\mathbb{S} \hookrightarrow \RR$ uniquely defined by identifying $e^{i \theta} \leftrightarrow \theta$ 
in such a way that $\theta \in (-\pi, \pi]$.

Recall the definition for any $1$-form $\epsilon \in \Omega^1(\RR)$ of the twisted partition function
\[
	Z_{\#}(\epsilon) = \int_{H_{\#}(G, \mathbb{S})} \prod_{e \in E} w(\J_e + \epsilon_e)  d\J.
\]
We wish to emphasize again that when $\# = \coclosed$, unlike in the case of planar graphs, this does not generally correspond to a spin model on vertices, 
but rather to a measure on $1$-forms taking value in the group $\mathbb{S}$ and satisfying $\d^* \J = 0$.

We will need an appropriate description of the Haar measure on $H_{\#}(G, \mathbb{S})$. 
Let us start with the case $\#  =  \exact$. 

\begin{lemma} \label{Lapp: Hcycle def}
	For any function $F: \Omega^1(G, \mathbb{S}) \to \RR$
	\[
		\int_{\Hcycle(G, \mathbb{S})} F(\J) d \J = \int_{\mathbb{S}^V} F(\d \theta) d \theta
	\]
	where $d\theta$ is the product uniform measure on $\mathbb{S}^V$. 
\end{lemma}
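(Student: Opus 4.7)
The statement is essentially a change-of-variables identity for Haar measure on a compact abelian group, so my plan is to identify $\Hcycle(G,\mathbb S)$ as a compact group and realize the right-hand side as the pushforward of the product Haar measure on $\mathbb S^V$ under the coboundary map.

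First I would set up the map $\Phi: \mathbb S^V \to \Omega^1(G,\mathbb S)$ given by $\theta\mapsto \d\theta$. This is a continuous group homomorphism (with multiplicative/additive conventions as already fixed in the main text), its image is exactly $\Hcycle(G,\mathbb S)$ by definition, and its kernel consists of the locally constant functions on the connected components of $G$. Since $\mathbb S^V$ is compact and $\Phi$ is continuous, $\Hcycle(G,\mathbb S)$ is a compact subgroup of $\mathbb S^E$, and hence carries a unique Haar probability measure $d\J$.

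Next, let $\mu$ denote the product uniform (Haar) probability measure $d\theta$ on $\mathbb S^V$, and let $\nu = \Phi_*\mu$ denote its pushforward to $\Hcycle(G,\mathbb S)$. Clearly $\nu$ is a Borel probability measure. To check translation invariance, take any $\omega_0 \in \Hcycle(G,\mathbb S)$ and pick $\theta_0\in \mathbb S^V$ with $\Phi(\theta_0)=\omega_0$ (which exists since $\Phi$ is surjective onto $\Hcycle$). For any Borel set $A\subset \Hcycle(G,\mathbb S)$, using $\Phi^{-1}(A+\omega_0)=\Phi^{-1}(A)+\theta_0$ together with translation invariance of $\mu$ on $\mathbb S^V$, I obtain $\nu(A+\omega_0)=\mu(\Phi^{-1}(A)+\theta_0)=\mu(\Phi^{-1}(A))=\nu(A)$. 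By uniqueness of Haar probability measure on the compact group $\Hcycle(G,\mathbb S)$, we conclude $\nu=d\J$.

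Finally, the standard change-of-variables formula for pushforward measures gives
\[
\int_{\Hcycle(G,\mathbb S)} F(\J)\,d\J \;=\; \int_{\mathbb S^V} F(\Phi(\theta))\,d\theta \;=\; \int_{\mathbb S^V} F(\d\theta)\,d\theta,
\]
which is the claim. The only subtle point is the appeal to uniqueness of the Haar probability measure, but since $\Hcycle(G,\mathbb S)$ is a closed (hence compact) subgroup of $\mathbb S^E$, this is entirely standard, so I do not expect a real obstacle. The analogous statement for $\#=\coclosed$, which is presumably treated immediately afterwards in the appendix, would follow by the dual description of $\Hstar$ as a kernel together with an analogous computation, or alternatively via planar/Pontryagin duality.
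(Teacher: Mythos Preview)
Your proof is correct and follows essentially the same approach as the paper: define the pushforward of the product Haar measure on $\mathbb S^V$ under $\d$, verify translation invariance on $\Hcycle(G,\mathbb S)$ using surjectivity of $\d$ and translation invariance of $d\theta$, and conclude by uniqueness of the Haar probability measure. Your version is slightly more explicit in justifying compactness of $\Hcycle(G,\mathbb S)$, but otherwise the arguments coincide.
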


\begin{proof}
	We first note that the measure $\nu$ on $\Hcycle(E, \mathbb{S})$ defined via
	\[
		\nu(A) := \int_{\mathbb{S}^V} \id_{A}(\d \theta) d \theta
	\]
	is a Radon probability measure. We are left to argue that it is invariant under the group action, since then it is the unique Haar measure. 
	To that end, let $\J' \in \Hcycle$ and recall that we use additive notation for abelian groups. Note that $\J' = \d \theta'$ for some $\theta' \in \mathbb{S}^V$, so that $\J'+\d \theta= \d\theta' +\d \theta = \d(\theta' +\theta)$. 
	In particular, 
	\[
		\nu( A-J') = \int_{\mathbb{S}^V} \id_{A-J'}(\d \theta) d \theta = \int_{\mathbb{S}^V} \id_{A}(\d (\theta'+ \theta)) d \theta = \nu(A),
	\]
	where the last equality follows as $d \theta$ is the product Lebesgue measure and hence invariant under rotations (translations) of each of the coordinates. 
\end{proof}

For the case $\# = \coclosed$, we will need a different argument. 
An element $\J \in H_{\coclosed}(\mathbb{S})$ satisfies $d^* \J \equiv 0$ by definition. 
Therefore, knowing the value of $\J$ at all edges containing a vertex $x$ but one, 
uniquely determines the value of $\J$ on the last edge. 
Let $T\subset E$ be a spanning tree of $G = (V, E)$ (the exact choice does not matter). Let $G_T=(V,E\setminus T)$.
If $\partial \in V$ is a chosen root, then $T$ can be oriented towards the root and as such, 
each vertex in $V \setminus \{\partial\}$ satisfies that there is exactly one edge in the oriented tree pointing out of $x$. 
Therefore, for each $\J\in \Omega^{1}(G_T, \mathbb{S})$, there is a unique way to extend $\J$ to ${E}$ in such a way that
$\J \in \Hstar(G, \mathbb{S})$, and we will write $\bar{\J}$ for this extension. 

\begin{lemma} \label{Lapp: Hstar def}
	For any function $F: \Omega^1(G, \mathbb{S}) \to \RR$, we have
	\[	
		\int_{\Hstar(\mathbb{S})} F(\J) d \J = \int_{\mathbb{S}^{E \setminus T}} F(\bar{\J}) d \J, 
	\]
	where the measure on the right-hand side is the product uniform measure on $\mathbb{S}^{E \setminus T}$. 
	In particular, the right-hand side does not depend on $T$. 
\end{lemma}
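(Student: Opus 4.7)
The plan is to exhibit a topological group isomorphism between $\mathbb{S}^{E\setminus T}$ and $\Hstar(G,\mathbb{S})$, namely the extension map $\overline{\cdot}$, and then invoke uniqueness of the normalised Haar measure on the compact abelian group $\Hstar(G,\mathbb{S})$. The right-hand side of the stated identity is, by construction, the pushforward of the product uniform measure on $\mathbb{S}^{E\setminus T}$ under $\overline{\cdot}$, so once we know this map is a measurable group isomorphism, the two measures must coincide (and the independence from $T$ follows as a free corollary).

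First I would check that $\overline{\cdot}$ is well-defined and bijective. Orient the spanning tree $T$ towards the root $\partial$. Every non-root vertex $x\in V\setminus\{\partial\}$ has a unique outgoing tree-edge $e(x)\in T$. Process vertices $x$ in reverse BFS order (leaves of $T$ first): given the values of $\J$ on $E\setminus T$ together with all tree-edges already determined at descendants of $x$, the coclosedness constraint $\d^*\J_x=0$ determines $\J_{e(x)}$ uniquely. After all non-root vertices are processed, I would verify that the constraint at $\partial$ is automatic: this follows from the identity
\[
\sum_{x\in V}\d^*\J_x = 0
\]
which holds for every $\J\in\Omega^1(G,\mathbb{S})$, since each directed edge contributes cancelling terms. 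This produces a unique extension $\bar\J\in\Hstar(G,\mathbb{S})$, and the restriction map $\Hstar(G,\mathbb{S})\to\mathbb{S}^{E\setminus T}$ is its two-sided inverse.

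Next I would observe that $\overline{\cdot}$ is a continuous group homomorphism. Continuity is immediate because each coordinate of $\bar\J$ is a finite sum (with signs) of the original coordinates, determined by the inductive procedure above. For the homomorphism property, if $\overline{\J^{(1)}}$ and $\overline{\J^{(2)}}$ are the extensions of $\J^{(1)},\J^{(2)}\in\mathbb{S}^{E\setminus T}$, then $\overline{\J^{(1)}}+\overline{\J^{(2)}}$ is coclosed (by linearity of $\d^*$) and restricts to $\J^{(1)}+\J^{(2)}$ on $E\setminus T$; by uniqueness of extension, this must equal $\overline{\J^{(1)}+\J^{(2)}}$.

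Finally, let $\nu$ denote the right-hand side measure, i.e.\ the pushforward of the product uniform measure on $\mathbb{S}^{E\setminus T}$ under $\overline{\cdot}$. It is a Radon probability measure on the compact group $\Hstar(G,\mathbb{S})$. Translation invariance is immediate: for $\J'\in\Hstar(G,\mathbb{S})$ and a Borel set $A$,
\[
\nu(A-\J') = \int_{\mathbb{S}^{E\setminus T}} \id_A\bigl(\overline{\J}+\J'\bigr)\,d\J = \int_{\mathbb{S}^{E\setminus T}} \id_A\bigl(\overline{\J+\J'\!\!\restriction_{E\setminus T}}\bigr)\,d\J = \nu(A),
\]
where in the last step we used translation invariance of the product uniform measure on $\mathbb{S}^{E\setminus T}$. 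By the uniqueness of the normalised Haar measure on a compact abelian group, $\nu$ must be the Haar probability measure on $\Hstar(G,\mathbb{S})$, proving the lemma. The only real subtlety is the inductive construction of $\bar\J$ and the verification that the constraint at $\partial$ is automatic; once that is in place, the rest is uniqueness of Haar measure applied to a concrete isomorphism.
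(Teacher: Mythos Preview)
Your proof is correct and follows essentially the same strategy as the paper: define the pushforward of the product uniform measure under the extension map $\overline{\cdot}$ and invoke uniqueness of the normalised Haar measure on $\Hstar(G,\mathbb{S})$. The paper's version is terser---it essentially takes the well-definedness and homomorphism property of $\overline{\cdot}$ for granted and only writes out the translation-invariance computation---whereas you spell out the inductive construction of $\bar\J$, the automatic constraint at $\partial$, and the group-homomorphism check explicitly.
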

	
\begin{proof}
	Define the measure $\nu$ on $\Hstar(G, \mathbb{S})=\ker(\d^*)$ through $\nu(A) := \int_{\mathbb{S}^{E \setminus T}} \id_{A}(\bar{\J}) d \J$.
	It is enough to show that $\nu$ is invariant under the group action. 
	Indeed, for any $\tau \in \mathbb{S}^{E \setminus T}$, we have 
	\[
	\nu( A-\bar\tau) = \int_{\mathbb{S}^{E \setminus T}} \id_{A}(\overline{\J+\tau}) d \J = \int_{\mathbb{S}^{E \setminus T}} \id_{A}(\bar{\J}) d \J=\nu(A),
	\]
	since the product uniform measure is invariant under the group action. This ends the proof. 
\end{proof}

We will also need the following classical results from Fourier series theory. For proofs, see e.g.~\cite{Bash} or \cite[Theorem IV.2.9]{Wer_FA} (in German).
\begin{lemma} \label{lem:fform}
	 Let $f: \mathbb S\to \mathbb R$ be continuously differentiable. 
	 Then $f(\theta)=\lim_{K\to \infty} f_K(\theta)$, with
	 \[
	 f_K(\theta)=a_0 + \sum_{k=1}^K (a_k e^{i k \theta} + a_{-k} e^{-i k \theta} ), \qquad \text{where} \qquad a_k=  \int_{\mathbb S} e^{- i k \theta}f(\theta){d} \theta.
	 \]
	 Moreover, the convergence is uniform on $\mathbb S$. Finally
	 \[
	  f_K(\theta) =  \int_{\mathbb S} \Big(\sum_{k=-K}^Ke^{ik (\theta'-\theta)}\Big) f(\theta' ){d}  \theta'.
	 \]
\end{lemma}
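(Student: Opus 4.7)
The plan is to prove the three parts in the order: (i) the Dirichlet kernel representation; (ii) uniform convergence of $f_K$ to some continuous limit $g$; (iii) identification $g=f$. Part (i) is immediate: substitute $a_k=\int_{\mathbb{S}} e^{-ik\theta'}f(\theta')\,\d\theta'$ into the finite sum defining $f_K$, interchange the (finite) sum with the integral, and re-index the $2K+1$ resulting terms as $\sum_{k=-K}^{K}$. This produces exactly the kernel $\sum_{k=-K}^K e^{ik(\theta'-\theta)}$ inside the integral.

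For (ii), I would exploit the $C^1$ hypothesis via integration by parts on $\mathbb{S}$ (legitimate because $\mathbb{S}$ has no boundary) to obtain the relation $a_k(f)=\tfrac{1}{ik}\,a_k(f')$ for every $k\neq 0$. Since $f'$ is continuous on the compact space $\mathbb{S}$, it lies in $L^2(\mathbb{S})$, and Bessel's inequality for the orthonormal family $\{e^{ik\theta}\}_{k\in\ZZ}$ yields $\sum_{k\in\ZZ}|a_k(f')|^2\leq \|f'\|_{L^2}^2<\infty$. Cauchy--Schwarz then gives
\[
\sum_{k\neq 0}|a_k(f)| \;=\; \sum_{k\neq 0}\frac{|a_k(f')|}{|k|} \;\leq\; \Big(\sum_{k\neq 0}k^{-2}\Big)^{1/2}\|f'\|_{L^2} \;<\;\infty,
\]
so the Weierstrass M-test yields uniform (indeed absolute) convergence of the partial sums $f_K$ on $\mathbb{S}$ to some continuous limit $g$.

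For (iii), I would invoke Fej\'er's theorem: for every continuous $f$ on $\mathbb{S}$ the Ces\`aro means $\sigma_N:=(N+1)^{-1}\sum_{K=0}^{N}f_K$ converge uniformly to $f$ (one writes $\sigma_N$ as convolution of $f$ with the nonnegative Fej\'er kernel, which is an approximate identity, and uses uniform continuity of $f$ on the compact set $\mathbb{S}$). Since $f_K\to g$ uniformly, the same holds for $\sigma_N$, and hence $g=f$ as required.

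The main obstacle is step (iii): absolute summability alone only yields uniform convergence to \emph{some} continuous function, and pinning this limit down as $f$ itself requires an approximate-identity argument such as Fej\'er's theorem. An alternative is a direct Dirichlet-integral computation combined with Riemann--Lebesgue, where the $C^1$ regularity is used to keep the ratio $(f(\theta')-f(\theta))/\sin((\theta'-\theta)/2)$ uniformly bounded in a neighborhood of $\theta'=\theta$; either route is standard but constitutes the only genuinely non-trivial input.
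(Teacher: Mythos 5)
The paper does not supply its own proof of this lemma: immediately before the statement it defers to the references \cite{Bash} and \cite[Theorem IV.2.9]{Wer_FA}, so there is no in-text argument to compare against. Your proof is correct. Part (i) follows by substituting $a_k$ and noting the symmetry $\sum_{k=-K}^K e^{ik(\theta-\theta')}=\sum_{k=-K}^K e^{ik(\theta'-\theta)}$; part (ii) uses integration by parts on the boundaryless circle to get $a_k(f)=a_k(f')/(ik)$, then Bessel plus Cauchy--Schwarz to conclude $\sum_k|a_k|<\infty$ and hence uniform convergence of $f_K$ to some continuous $g$; and part (iii) identifies $g=f$ via Fej\'er. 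This is one of the standard textbook routes. A minor alternative for (iii): uniform convergence implies $L^2$ convergence, and one checks directly that $g$ has the same Fourier coefficients as $f$, so completeness of $\{e^{ik\theta}\}$ in $L^2(\mathbb S)$ gives $g=f$ a.e.\ and then everywhere by continuity, avoiding Fej\'er entirely. Either way, you have correctly isolated the genuinely non-trivial step (identifying the limit) and handled it properly.
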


\begin{proof}[Proof of Lemma \ref{lem:duality}]
	\textbf{Case I: $\nu_{\coclosed}$.} 
		It follows from condition~\ref{eq:Vsum} and from the dominated convergence theorem that for any $\epsilon \in \Omega^1(\mathbb R)$, we have
	\begin{align*}
		Z_{\exact}(\epsilon) &= \int_{H_{\exact}(\mathbb{S})} \prod_{e \in E} w(\J_e + \epsilon_e) d \J \\
		&= \int_{H_{\exact}(\mathbb{S})} \sum_{\n: E \to \ZZ} \prod_{e \in E} e^{i \n_e(\J_e + \epsilon_e)} \exp(-\c{V}(\n_e)) d \J \\
		&= \sum_{\n: E \to \ZZ} e^{i (\n, \epsilon)}\prod_{e \in E}\exp(-\c{V}(\n_e)) \int_{H_{\exact}(\mathbb{S})} e^{ i (\n, \J)} d \J. 
	\end{align*}
	Moreover, by Lemma~\ref{Lapp: Hcycle def} we have
	\begin{align*}
		\int_{\Hcycle(\mathbb{S})} e^{i(\n, \J)} d \J = \int_{\mathbb{S}^V} e^{i(\n, \d \theta)} d \theta 
		= \int_{\mathbb{S}^V} e^{i(\d^*\n, \theta)} d \theta = \prod_{x \in V} \int_{\mathbb{S}} e^{i \d^*\n_x\theta_x} d \theta_x = \mathbf 1\{ \n \in H_{\coclosed} (\mathbb Z)\},
	\end{align*}
	which ends the proof of case I.
	
	\textbf{Case II: $\nu_{\exact}$.} We will show equality of partition functions with $\epsilon =0$, and the general case follows the same steps. By Lemma~\ref{lem:fform} we have
	\begin{align*}
		\prod_{e \in E} \exp(-\mathcal V(\d h_e)) &=  \prod_{e \in E} \int_{\mathbb S} e^{ i \d h_e \theta_e}w(\theta_e){d} \theta_e \\
		&= \int_{\mathbb S^E}  e^{ i (\d h, \theta)_{\Omega_1}} \prod_{e \in E}w(\theta_e)\prod_{e \in E}{d} \theta_e \\
		&= \int_{\mathbb S^E}  e^{ i (h, \d^*\theta)_{\Omega_0}} \prod_{e \in E}w(\theta_e)\prod_{e \in E} {d}\theta_e,	
	\end{align*}
	and hence
	\begin{align}
	&\mathop{\sum_{h: V\to \mathbb Z}}_{h_\partial =0} \prod_{e \in E} \exp(-\mathcal V(\d h_e))  = \lim_{K_{v_n}\to \infty} \cdots  \lim_{K_{v_1}\to \infty}\nonumber
	 \sum_{h_{v_n}\in I_{K_{v_n}}}\cdots \sum_{h_{v_1}\in I_{K_{v_1}}} \prod_{e \in E}\exp(-\mathcal V(\d h_e)) \nonumber \\
	 &=  \lim_{K_{v_n}\to \infty} \cdots  \lim_{K_{v_1}\to \infty} \sum_{h_{v_n}\in I_{K_{v_n}}}\cdots \sum_{h_{v_1}\in I_{K_{v_1}}} \int_{\mathbb S^E}  e^{i (h, \d^*\theta)_{\Omega_0}} \prod_{e \in E}w(\theta_e)\prod_{e \in E} {d}\theta_e, \label{eq:bigone}
	\end{align}
	where $I_K=\{-K,\ldots,K\}$, and $v_1, v_2, \ldots, v_n$ is any ordering of $V\setminus \{\partial\}$ that explores the tree $T$ from the leaves towards the root $\partial$.
	
	We will now evaluate the expression above with the use of Lemma~\ref{lem:fform} by iteratively (over $i$) exchanging the order of summation of $h_{v_i}$ with the integration over $\theta_{e_{v_i}}$, and then takin the $K_{v_i}\to \infty $ limit.
	To this end, we orient each edge in $T$ towards the root vertex $\partial$, and to each vertex $v\neq \partial$ we assign the unique outgoing edge $e_v$ from $v$.
	
	In the first step we choose the leaf vertex $v=v_1$, and write
	\[
		\d^* \theta_v = \sum_{w \sim v} \theta_{wv} = -\theta_{e_v} + \theta_{e_1} + \ldots + \theta_{e_{l}},
	\]
	where $l+1$ is the degree of $v$ in $G$, and $e_1, \ldots, e_l$ are the remaining edges in $E$ incident to $v$ and pointing at $v$. 
	Let $x$ be the other endpoint of the edge $e_{v}$, so that $e_{v} = (v, x)$. 
		Given $h_x \in I_{K_x}$ and $(\theta_e)_{e \in E \setminus \{e_v\}}$ apply Lemma~\ref{lem:fform} (separately to the imaginary and real part) with $f(\theta_{e_v}) := w(\theta_{e_v}) e^{i h_x d^* \theta_x}$ to get
		\[
			\int_{\mathbb S} \Big( \sum_{h_v \in I_{K_v}} e^{i h_v \d^*\theta_v}\Big) w(\theta_{e_v})e^{i h_x d^* \theta_x} {d} \theta_{e_v} = f_{K_v}( \theta_{e_1} + \ldots + \theta_{e_{l}}) \to f( \theta_{e_1} + \ldots + \theta_{e_{l}}), 
		\]
		as $K_v \to \infty$ uniformly in $\theta_{e_1} + \ldots + \theta_{e_l}$. This means that we can take $K_v \to \infty$ inside the integral over $\mathbb{S}^{E \setminus \{e_v\}}$. All in all this removes the variables $h_v$, $K_v$ from~\eqref{eq:bigone}, and $\theta_{e_v}$ is replaced it by $\theta_{e_1} + \ldots + \theta_{e_l}$. Define now $\theta^{(1)}_e = \theta_e$ for $e \in E \setminus \{e_v\}$ and $\theta^{(1)}_{e_v} = \theta_{e_1} + \ldots + \theta_{e_l}$. 
		In other words, after step one, \eqref{eq:bigone} becomes
		\[
			 \lim_{K_{v_n}\to \infty} \cdots  \lim_{K_{v_2}\to \infty} \sum_{h_{v_n}\in I_{K_{v_n}}}\cdots \sum_{h_{v_2}\in I_{K_{v_2}}} \int_{\mathbb S^{E \setminus \{e_{v_1}\}}}  \prod_{w \in V \setminus \{v_1\}} e^{i h_w (\d^*\theta^{1})_w} \prod_{e \in E}w(\theta^{1}_e)\prod_{e \in E \setminus \{e_{v_1}\}} {d}\theta_e. 
		\]
		
		We continue this procedure for edge $e_{v_2}$ where we take the corresponding $f$ ($x$ is replaced by the other endpoint of $e_{v_2}$). 
		In this step we remove the variables $h_{v_2}, K_{v_2}$ and $\theta^{(1)}_{e_{v_2}}$, and replace the latter by $\theta^{(1)}_{e_1} + \ldots + \theta^{(1)}_{e_l}$ (where $l$ depends on $v_2$ now). Define then $(\theta^2_e)_{e \in E}$ through $\theta^{(2)}_e = \theta^{(1)}_e$ on $e \in E \setminus \{e_{v_2}\}$ and 
		$\theta^{(2)}_{e_{v_2}} = \theta^{(1)}_{e_1} + \ldots + \theta^{(1)}_{e_l}$. 
		We iterate the procedure until we have done so for all vertices of $V \setminus \{\partial\}$ and arrive at $\theta^{(n)}$. 
		It is clear that $(\d^* \theta^{(n)})_x = 0$ for all $x \in V \setminus \{\partial \}$, and therefore
		\[
			(\d^* \theta^{(n)})_{\partial} = (\d^*\theta^{(n)}, 1)_{\Omega^0} = (\theta^{(n)}, \d 1)_{\Omega^1} = 0, 
		\]
		so that $\d^* \theta^{(n)}$ vanishes on all of $V$. 
		
		Now let $(J_e)_{e \in E \setminus T} = (\theta_e)_{e \in E \setminus T}$ and define $\bar{J}$ the unique extension to $\Hstar(G, \mathbb{S})$ as before. 
		It is easy to check that $\bar{J}$ equals $\theta^{(n)}$. 
		Therefore, at the end of the iterative procedure, we have that~\eqref{eq:bigone} becomes
		\[
			 \int_{\mathbb S^{E\setminus T}} \prod_{e \in E}w(\bar J_e)\prod_{e \in E \setminus T} {d}J_e=Z_{\coclosed}(0), 
		\]
		where the equality follows from Lemma~\ref{Lapp: Hstar def}. This ends the proof of case II.
\end{proof}

\section{Proof of the Ginibre inequality} \label{ap:Gin}
We focus here only on the case where $\J$ takes values in $\Hstar$, as the other case is just the classical Ginibre inequality \cite{Gin}. 
To be precise, we will prove the following fact, from which Lemma \ref{L: Ginibre} follows immediately. 

\begin{lemma}\label{L: Ginibre 2}
	Consider the setup as in Section \ref{sec:setup2}. Let $F, F': \mathbb{S} \to \RR$ be two positive definite functions. Then 
	\[
		\mu_{\beta, \coclosed}(FF') - \mu_{\beta, \coclosed}(F)\mu_{\beta, \coclosed}(F') \geq 0. 
	\]
\end{lemma}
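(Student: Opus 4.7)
The plan is to reduce to the classical Ginibre inequality for XY-type spin systems by parameterizing $H_\coclosed(\mathbb S)$ via a spanning tree. After expanding the positive-definite $F, F'$ as non-negative sums of cosines, it suffices to prove the inequality when $F(J_e) = \cos(k J_e)$ and $F'(J_{e'}) = \cos(\ell J_{e'})$ for integers $k, \ell$ and edges $e, e'$ (possibly equal).

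First I would fix a spanning tree $T$ of $G$ rooted at $\partial$. Every $J \in H_\coclosed(\mathbb S)$ is uniquely determined by its values $\theta_f := J_f$ for $f \in E \setminus T$, with tree values $J_e(\theta) = \sum_{f \notin T} \sigma_{e,f}\theta_f$ for integer coefficients $\sigma_{e, f} \in \{-1, 0, 1\}$ coming from the fundamental cycles associated with $T$. By Lemma \ref{Lapp: Hstar def}, under this parameterization $\mu_{\beta, \coclosed}$ has density $Z^{-1}\prod_e w(J_e(\theta))$ with respect to the product Haar measure on $\mathbb S^{E \setminus T}$. Since $-\mathcal U_e$ is positive definite, writing $-\mathcal U_e(\alpha) = \sum_m \alpha_m^{(e)} \cos(m\alpha)$ with $\alpha_m^{(e)} \geq 0$ exhibits the minus-Hamiltonian as a non-negative combination of cosines of integer-linear functionals of the free spins $(\theta_f)_{f \notin T}$; the observables $\cos(k J_e(\theta))$ and $\cos(\ell J_{e'}(\theta))$ are manifestly of the same form.

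At this point the claim reduces to the generalized Ginibre inequality for XY-type spin systems with iid uniform single-spin measure: for any Gibbs measure on $\mathbb S^N$ whose minus-Hamiltonian is a non-negative linear combination of $\cos(\langle k, \theta\rangle)$ with $k \in \mathbb Z^N$, the covariance of any two observables of the same form is non-negative. The \textbf{main obstacle} is proving this generalized inequality, which I would do via Ginibre's duplicate-variable trick: passing to iid copies $\theta, \theta'$, using $\cos A + \cos B = 2\cos\tfrac{A+B}{2}\cos\tfrac{A-B}{2}$ and $\cos A - \cos B = -2\sin\tfrac{A+B}{2}\sin\tfrac{A-B}{2}$ to rewrite both the duplicated Gibbs weight and the symmetrized observable $(F(\theta) - F(\theta'))(F'(\theta) - F'(\theta'))$ in terms of $\sigma := \tfrac{\theta + \theta'}{2}$ and $\delta := \tfrac{\theta - \theta'}{2}$, and expanding in power series so that each term factorizes as $\Phi(\sigma)\Phi(\delta)$ for a common function $\Phi$. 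The sign ambiguities arising from the half-integer frequencies in $\sigma, \delta$ cancel in each product $\Phi(\sigma)\Phi(\delta)$; alternatively, one can expand everything fully into Fourier modes on $\mathbb S^N$ and reduce all integrals to the manifestly non-negative count $\int_{\mathbb S^N} \prod_j \cos(\langle n_j, \theta\rangle)\, d\theta = 2^{-N}\,\#\{\epsilon \in \{\pm 1\}^N : \sum_j \epsilon_j n_j = 0\}$, from which the non-negativity of each expansion term, and hence of the covariance, follows.
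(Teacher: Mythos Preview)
Your proposal is correct and follows essentially the same route as the paper: both arguments parameterize $H_\coclosed(\mathbb S)$ by the free coordinates on $\mathbb S^{E\setminus T}$ via a spanning tree, reduce to cosine observables, and then apply Ginibre's duplicate-variable trick together with the identities $\cos A\pm\cos B=2\,\cdot\,$(product of trig at $\tfrac{A\pm B}{2}$) and the change of variables $(\theta,\theta')\mapsto(\tfrac{\theta+\theta'}{2},\tfrac{\theta-\theta'}{2})$ to exhibit a square. The paper packages the core step as the inequality $\int_{H_\coclosed(\mathbb S)^2}\prod_i(\cos(m_iJ_{e_i})\pm\cos(m_iJ'_{e_i}))\,dJ\,dJ'\ge 0$ and then invokes \cite[Proposition~3]{Gin}, whereas you phrase the same computation as ``the generalized Ginibre inequality for XY-type systems on $\mathbb S^{|E\setminus T|}$''; this is only a difference of presentation. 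One small slip: in your alternative counting formula the exponent and the index set for $\epsilon$ should refer to the number of cosine factors, not to $N=|E\setminus T|$.
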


As in the classical proof by Ginibre \cite{Gin}, we will rely on the following result. 

\begin{lemma} \label{L: Ginibre core}
	For any $n \in \NN$ and $(m_i)_{i = 1}^n \in \ZZ^n$, we have 
	\[
		\int_{\Hstar(\mathbb{S})^2} \prod_{i = 1}^n (\cos(m_i\J_{e_i}))  \pm \cos(m_i\J_{e_i}'))d\J d \J' \geq 0, 
	\]
	where the signs $\pm$ might be different for each $i$.
\end{lemma}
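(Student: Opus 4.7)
My plan is to reduce the integral to a perfect square via the change of variables $(\J,\J')\mapsto(\sigma,\tau):=(\J+\J',\J-\J')$, which is a surjective group endomorphism of the compact group $\Hstar(\mathbb{S})\times\Hstar(\mathbb{S})$ and therefore pushes product Haar measure forward to product Haar measure. Naively applying the product-to-sum identities $\cos A\pm\cos B = \pm 2 f(\tfrac{A+B}{2})f(\tfrac{A-B}{2})$ (with $f=\cos$ for the plus sign and $f=\sin$ for the minus sign) produces factors of the form $f(m_i\sigma_{e_i}/2)$ that are not $2\pi$-periodic in $\sigma_{e_i}$ and are therefore not well-defined functions on $\Hstar(\mathbb{S})$. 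Circumventing this ill-definedness is the main obstacle.

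The doubling trick I would use is the identity $\int_{\Hstar(\mathbb{S})} h(\J)\,\mathrm{d}\J = \int_{\Hstar(\mathbb{S})} h(2\J)\,\mathrm{d}\J$. This holds because $\J\mapsto 2\J$ is a continuous surjective endomorphism of $\Hstar(\mathbb{S})$ and consequently preserves Haar measure; surjectivity follows from the fact that $\Hstar(\mathbb{S})$ is a torus, which in turn follows because the Smith normal form of the incidence-type matrix $\d^{\ast}:\ZZ^E\to\ZZ^V$ has all its nonzero elementary divisors equal to $1$. Applying this identity to both $\J$- and $\J'$-integrations replaces each $m_i$ in the integrand by $2m_i$ without changing the value of the integral, so from now on every multiplier may be assumed even.

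With even multipliers the product-to-sum identities take the form
\[
\cos(2m_i\J_{e_i}) + \varepsilon_i\cos(2m_i\J'_{e_i}) = 2\varepsilon_i\, g_i(m_i\sigma_{e_i})\,g_i(m_i\tau_{e_i}),
\]
where $\varepsilon_i\in\{\pm 1\}$ is the $i$-th sign and $g_i=\cos$ if $\varepsilon_i=+1$, $g_i=\sin$ if $\varepsilon_i=-1$; crucially, both sides are now genuine functions on $\Hstar(\mathbb{S})^2$. Writing $k:=\#\{i:\varepsilon_i=-1\}$ and $G(\xi):=\prod_i g_i(m_i\xi_{e_i})$ and taking the product over $i$, the integrand becomes $2^n(-1)^k G(\sigma)G(\tau)$. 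Since the map $(\J,\J')\mapsto(\sigma,\tau)$ is a surjective homomorphism of the compact group $\Hstar(\mathbb{S})^2$ it pushes product Haar forward to product Haar, and so
\[
\int_{\Hstar(\mathbb{S})^2} G(\sigma)G(\tau)\,\mathrm{d}\J\,\mathrm{d}\J' = \int_{\Hstar(\mathbb{S})^2} G(\sigma)G(\tau)\,\mathrm{d}\sigma\,\mathrm{d}\tau = \Big(\int_{\Hstar(\mathbb{S})} G\Big)^{2}.
\]
The integral in the lemma therefore equals $2^n(-1)^k\bigl(\int_{\Hstar(\mathbb{S})} G\bigr)^2$. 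If $k$ is even this is manifestly $\geq 0$. If $k$ is odd, $G$ contains an odd number of sine factors and is hence antisymmetric under $\xi\mapsto -\xi$ (which preserves Haar on $\Hstar(\mathbb{S})$), so $\int G=0$ and the expression vanishes. Either way, the claimed nonnegativity holds.
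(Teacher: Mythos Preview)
Your argument is correct and shares the paper's core strategy: use the product-to-sum identities to factor each term as $g_i(\cdot)\,g_i(\cdot)$, then change variables so that the double integral splits as a square. The difference lies only in how the half-angle obstruction is handled. The paper invokes the spanning-tree parametrisation of Lemma~\ref{Lapp: Hstar def} to identify $\Hstar(\mathbb{S})$ with the concrete torus $\mathbb{S}^{E\setminus T}$, extends $J$ linearly over $\RR$ to all of $E$, and then performs the change $(\tau,\tau')=\bigl(\tfrac{\J-\J'}{2},\tfrac{\J+\J'}{2}\bigr)$ there; you instead first double the frequencies via the surjective endomorphism $\J\mapsto 2\J$, so that the change $(\sigma,\tau)=(\J+\J',\J-\J')$ suffices and no halving is needed. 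Your route has the pleasant feature of working intrinsically on the group without choosing a spanning tree, and your justification that $\Hstar(\mathbb{S})$ is a torus via total unimodularity of $\d^*$ is valid (and equivalent to what Lemma~\ref{Lapp: Hstar def} provides). You are also explicit about the sign $(-1)^k$ and dispose of the odd-$k$ case by parity of $G$; this point is glossed over in the paper's write-up, where the stated identity for $\cos(M\J)-\cos(M\J')$ is in fact off by a sign, so your oddness observation is exactly what is needed to close that case.
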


\begin{proof}
	We begin by noticing that for any linear $M: \RR^{E} \to \RR$,
	\begin{align*}
		&\cos(M \J) + \cos(M \J') = 2\cos\left(M \frac{\J - \J'}{2}\right) \cos\left(M\frac{\J + \J'}{2}\right), \quad \text{and}\\
		&\cos( M \J) - \cos(M \J') = 2\sin\left( M \frac{\J - \J'}{2}\right) \sin\left(M \frac{\J + \J'}{2}\right). 
	\end{align*}
	Let $T\subset E$ be a spanning tree of $G = (V, E)$ and recall for $J \in \Omega^1(G_T, \mathbb{S})$ the definition of $\bar{J}$ as in Lemma~\ref{Lapp: Hstar def}. By Lemma~\ref{Lapp: Hstar def} we have
	\[
		\int_{\Hstar(\mathbb{S})^2} \prod_{i = 1}^n (\cos(m_i\J_{e_i}))  \pm \cos(m_i\J_{e_i}'))d\J d \J' = \int_{(\mathbb{S}^{E \setminus T})^2} \prod_{i = 1}^n (\cos(m_i\bar{\J}_{e_i}))  \pm \cos(m_i\bar{\J}_{e_i}'))d\J d \J'. 
	\]
	Now consider $J$ in $\Omega^1(G_T, \RR)$ (via the usual identification of $\mathbb{S}$ with $(-\pi, \pi]$) and define by $A_T J$ the unique extension of $J$ to $\Omega^1(G, \RR)$ so that $J \in \Hstar(G, \RR)$, i.e. so that  $\d^* (A_T J) = 0$ in $\RR$. Notice that $A_T J$ and $\bar{J}$ (seen in $\RR$) are equal on all edges in $E \setminus T$, while on an edge $e_i \in T$, the difference is of the form $2\pi k_{i}$ for some integer $k_i$.
	Since the cosine is $2\pi$-periodic, we notice that each factor where the edge $e_i$ is in $T$ is of the form $\cos(m_i(A_T \J)_{e_i}) \pm \cos(m_i(A_T \J')_{e_i})$. 
	All together, we can thus write
	\[
		\int_{\Hstar(\mathbb{S})^2} \prod_{i = 1}^n (\cos(m_i\J_{e_i}))  \pm \cos(m_i\J_{e_i}'))d\J d \J' = \int_{(\mathbb{S}^{E \setminus T})^2}F\left(\frac{\J + \J'}{2}\right)F\left(\frac{\J - \J'}{2}\right)d \J d \J'
	\]
	for some function $F: \mathbb{S}^{E \setminus T} \to \RR$. 	
	Next, make the change of variables via $\tau_e := \frac{\J_e - \J_e'}{2}$ and $\tau'_{e} = \frac{\J_e + \J'_e}{2}$, so that
	\begin{align*}
		\int_{(\mathbb{S}^{E \setminus T})^2}F\left(\frac{\J + \J'}{2}\right)F\left(\frac{\J - \J'}{2}\right)d \J d \J' &= \int_{ (\mathbb{S}^{E \setminus T})^2} F(\tau)F(\tau') d\tau d\tau' \\
		&= \Big(\int_{ (\mathbb{S}^{E \setminus T})^2} F(\tau)d\tau\Big)^2\geq 0.
	\end{align*}
	This ends the proof.
\end{proof}

Note that the collection of functions $t \mapsto \cos(mt)$, $m \in \ZZ$,
generates the positive cone of (real) positive definite functions. 
Since the integral appearing in Lemma \ref{L: Ginibre core} 
is stable under taking convex combinations, this implies that
for any collection $\{ F_i\}$ of positive definite functions $\mathbb{S} \to \RR$, we have
\[
	\int_{\Hstar(\mathbb{S})^2} \prod_{i=1}^n (F_i(\J_{e_i}) \pm F_i(\J_{e_i}')) d \J d \J' \geq 0.
\]
This last remark is also the content of Propositions 1 and Example 4 of \cite{Gin}. 
From this, Lemma \ref{L: Ginibre 2} can be proved in exactly the same way as in \cite[Proposition 3]{Gin}.

\section{Reflection positivity.} \label{ap:RP}
We recall briefly a condition for potentials to be reflection positive. 
For further reference, see e.g. \cite{Biskup} and \cite{FriVel}. 
Fix the torus $\mathbb{T}_n = (\mathbb{Z} / n\ZZ)^d$ and let $\Theta$ be any reflection
(either through edges or through vertices). 
This naturally splits the torus into two parts $\mathbb{T}_n^+$ and $\mathbb{T}_n^-$. 
Let $\mathscr{U}^{\pm}$ be the set of real-valued functions on $\mathbb{T}_n$ depending only on $\mathbb{T}_n^{\pm}$. 
Then $\Theta$ induces a map $\Theta: \mathscr{U}^{\pm} \to \mathscr{U}^{\mp}$.
We will say that a probability measure $\mu$ on $\mathbb{S}^{\mathbb{T}_n}$ is reflection positive with respect to $\Theta$ if
\vspace{0.2cm}
\begin{enumerate}[\hspace{0.5cm}(a)]
	\item $\mu(g \Theta f) = \mu(f \Theta g)$ for all $f, g \in \mathscr{U}^+$, 
	\item $\mu(g \Theta g) \geq 0$. 
\end{enumerate}
\vspace{0.2cm}
\noindent Although the property (a) is not important for us, it is also the easier part and it holds precisely for all measures that are invariant under the reflection $\Theta$. 
It is not hard to see that \emph{all} measures we consider in this text thus satisfy (a). 
We recall the following lemma. 

\begin{lemma} \label{L: RP condition}
	Let $\c{H}_n:\mathbb{S}^{\mathbb{T}_n} \to \RR$ be the Hamiltonian of a spin-system on the torus satisfying
	\[
		-\c{H}_n = A + \Theta A + \sum_{i} C_i \Theta C_i
	\]
	for some functions $A, C_{i} \in \mathscr{U}^+$. Then $\mu_n \propto e^{-\c{H}_n}$ is reflection positive w.r.t. $\Theta$. 
\end{lemma}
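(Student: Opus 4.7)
The plan is to verify the two conditions (a) and (b) defining reflection positivity directly from the prescribed decomposition of the Hamiltonian.

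Condition (a) is quick: since $\c{H}_n$ is manifestly $\Theta$-invariant (because $\Theta^2 = \mathrm{id}$, so $\Theta A + A$ and $\sum_i C_i \Theta C_i$ are symmetric under $\Theta$), the measure $\mu_n$ is invariant under $\Theta$, and then $\mu_n(g \Theta f) = \mu_n(\Theta(g \Theta f)) = \mu_n(\Theta g \cdot f) = \mu_n(f \Theta g)$ using that $\Theta$ is a ring homomorphism on functions (it acts by precomposition) and that $\Theta^2 = \mathrm{id}$.

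For condition (b), the key idea is to expand the Gibbs weight into a manifestly reflection-positive form. Write
\[
e^{-\c{H}_n} = e^{A}\, \Theta(e^{A}) \cdot \exp\Big(\sum_i C_i \Theta C_i\Big),
\]
using that $\Theta$ commutes with pointwise products and with the exponential. Next, expand the last exponential as a Taylor series and use multinomial expansion:
\[
\exp\Big(\sum_i C_i \Theta C_i\Big) = \sum_{k \geq 0} \frac{1}{k!} \sum_{i_1,\dots,i_k} \big(C_{i_1}\cdots C_{i_k}\big)\,\Theta\big(C_{i_1}\cdots C_{i_k}\big).
\]
Setting $D_{\alpha} := e^{A} \cdot C_{i_1}\cdots C_{i_k} \in \mathscr{U}^+$ and bundling the multinomial coefficients into nonnegative weights $\lambda_\alpha \geq 0$, this gives
\[
e^{-\c{H}_n} = \sum_\alpha \lambda_\alpha\, D_\alpha \cdot \Theta D_\alpha,
\]
as an (absolutely convergent) sum of ``squares'' with respect to $\Theta$. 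All the $C_i$ and $A$ are continuous on a compact space, so convergence is uniform and termwise integration against $g\cdot\Theta g$ is justified.

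To conclude, for any $g \in \mathscr{U}^+$, set $f_\alpha := g D_\alpha \in \mathscr{U}^+$. Then
\[
\mu_n(g\Theta g) = \frac{1}{Z}\sum_\alpha \lambda_\alpha \int_{\mathbb{S}^{\mathbb{T}_n}} f_\alpha(\sigma) \,\Theta f_\alpha(\sigma)\, d\sigma.
\]
Since the reference measure is the product Haar measure on $\mathbb{S}^{\mathbb{T}_n^+} \times \mathbb{S}^{\mathbb{T}_n^-}$, $f_\alpha$ depends only on $\sigma|_{\mathbb{T}_n^+}$, and $\Theta f_\alpha$ depends only on $\sigma|_{\mathbb{T}_n^-}$, Fubini gives
\[
\int f_\alpha \,\Theta f_\alpha\, d\sigma = \Big(\int_{\mathbb{S}^{\mathbb{T}_n^+}} f_\alpha\, d\sigma^+\Big)\Big(\int_{\mathbb{S}^{\mathbb{T}_n^-}} \Theta f_\alpha\, d\sigma^-\Big) = \Big(\int_{\mathbb{S}^{\mathbb{T}_n^+}} f_\alpha\, d\sigma^+\Big)^2 \geq 0,
\]
where in the last step we changed variables via $\Theta$, which preserves the Haar measure. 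Hence each summand is nonnegative, and $\mu_n(g\Theta g) \geq 0$, establishing reflection positivity. The only genuine bookkeeping point to be careful about is the exchange of the infinite sum with the integral, but this is immediate from uniform convergence on the compact space $\mathbb{S}^{\mathbb{T}_n}$; there is no real analytic obstacle.
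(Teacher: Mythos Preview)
The paper does not supply its own proof of this lemma; it simply cites Biskup's lecture notes and Friedli--Velenik, Lemma~10.8. Your argument is precisely the standard one found in those references: expand $\exp\bigl(\sum_i C_i\,\Theta C_i\bigr)$ as a convergent nonnegative combination of terms of the form $D_\alpha\,\Theta D_\alpha$, absorb $e^{A}\,\Theta(e^{A})$ and $g\,\Theta g$ into these factors, and then use the product structure of the Haar reference measure together with the change of variables $\sigma\mapsto\Theta\sigma$ to recognise each summand as a square.

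One small point worth tightening: you assert that the reference measure is the product measure on $\mathbb{S}^{\mathbb{T}_n^+}\times\mathbb{S}^{\mathbb{T}_n^-}$, but for reflections \emph{through vertices} the two halves share the hyperplane of fixed sites, so $\mathbb{T}_n^+\cap\mathbb{T}_n^-\neq\emptyset$ and the measure does not literally factorise over that product. The usual remedy is to first condition on the spins on the fixed hyperplane; given these, the remaining coordinates in $\mathbb{T}_n^+\setminus\mathbb{T}_n^-$ and $\mathbb{T}_n^-\setminus\mathbb{T}_n^+$ are independent, your Fubini and change-of-variables step produces a square (a nonnegative function of the conditioned spins), and one then integrates over the hyperplane. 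For reflections through edges the two halves are genuinely disjoint and your argument works verbatim. This is a cosmetic adjustment rather than a real gap.
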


For a proof we refer to e.g.~\cite{Biskup} or \cite[Lemma 10.8]{FriVel}. 
We point out already that for reflections going through vertices, 
the decomposition of Lemma \ref{L: RP condition} is trivial as we consider only nearest-neighbor interactions. 

For reflection through edges, we need that we can decompose
\[
	-\c{U}(t_x - t_y) = \sum_{i} F_{i}(t_x)F_{i}(t_y), 
\]
for some collection of functions $\{F_{i}\}$. 
By classical trigonometric identities, this can be easily deduced whenever $-\c{U}$ is positive definite and regular enough so that  
\[
	-\c{U}(t_x - t_y) = \sum_{i = 0}^\infty \alpha_i \cos(i(t_x - t_y)) = \sum_{i = 0}^\infty \alpha_i (\cos(it_x)\cos(it_y) + \sin(it_x)\sin(it_y)), 
\]
with $\alpha_i\geq0$.

\section{Positive definite functions} \label{ap:div}
We will call an even function $F:\mathbb{S} \to \RR$ \textit{conditionally positive definite} if for any vector $\xi = (\xi_1, \ldots, \xi_n) \in \RR^n$ with mean $0$ and all $t_1, \ldots, t_n \in \mathbb{S}$ it holds that
\[
	\sum_{i, j} \xi_i\xi_j F(t_i - t_j) \geq 0
\]
\begin{lemma}
	A function $F$ is conditionally positive definite if and only if $e^{cF}$ is positive definite for each $c > 0$. 
\end{lemma}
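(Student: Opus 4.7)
The plan is to follow the classical Schoenberg argument, splitting into the two implications.

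For the forward implication, assume $e^{cF}$ is positive definite for every $c > 0$ and let $\xi_1, \dots, \xi_n \in \RR$ be a mean-zero vector with $t_1, \dots, t_n \in \mathbb{S}$. Since $\sum_{i,j} \xi_i \xi_j = (\sum_i \xi_i)^2 = 0$, subtracting the constant $1$ from $e^{cF(t_i - t_j)}$ does not affect the sign, and dividing the resulting inequality by $c$ gives
\[
\sum_{i,j} \xi_i \xi_j \frac{e^{cF(t_i - t_j)} - 1}{c} \geq 0.
\]
Letting $c \downarrow 0$ and exchanging the limit with the finite sum, the left-hand side tends to $\sum_{i,j} \xi_i \xi_j F(t_i - t_j)$, giving conditional positive definiteness of $F$.

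For the harder direction, the key idea is to absorb the mean-zero constraint by passing to a modified kernel. Fix $c > 0$, fix $t_0 \in \mathbb{S}$ (say $t_0 = 0$), and define
\[
\tilde F(s, t) := F(s - t) - F(s - t_0) - F(t_0 - t) + F(0).
\]
I first claim $\tilde F$ is a positive definite kernel in the unrestricted sense: given any real vector $\xi_1, \dots, \xi_n$, extend it to $(\xi_0, \xi_1, \dots, \xi_n)$ by setting $\xi_0 := -\sum_{i=1}^n \xi_i$, so that the extended vector is mean zero. Expanding $\sum_{i,j=0}^n \xi_i \xi_j F(t_i - t_j)$ using evenness of $F$ and the definition of $\xi_0$ collapses the sum to exactly $\sum_{i,j=1}^n \xi_i \xi_j \tilde F(t_i, t_j)$. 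Conditional positive definiteness of $F$ then yields positivity of this quadratic form.

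Next I apply the Schur product theorem to conclude that $e^{c \tilde F(\cdot, \cdot)}$ is again a positive definite kernel for every $c > 0$ (positive definiteness is preserved by pointwise products and hence, via the Taylor series of the exponential together with the fact that the constant $1$ is positive definite, by exponentiation). Finally, unwinding the definition of $\tilde F$ gives
\[
e^{c F(t_i - t_j)} = e^{-c F(0)}\, e^{c F(t_i - t_0)}\, e^{c F(t_0 - t_j)}\, e^{c \tilde F(t_i, t_j)}.
\]
Writing $\eta_i := \xi_i e^{c F(t_i - t_0)}$, for any real $\xi$,
\[
\sum_{i,j} \xi_i \xi_j e^{c F(t_i - t_j)} = e^{-c F(0)} \sum_{i,j} \eta_i \eta_j e^{c \tilde F(t_i, t_j)} \geq 0,
\]
which shows $e^{cF}$ is positive definite. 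The main technical point is the padding argument producing $\tilde F$; once that is in place the Schur product step is routine. One minor bookkeeping issue is that the paper uses the Bochner-equivalent Fourier-theoretic definition of positive definiteness, but since $F$ is real and even the quadratic-form formulation used here is equivalent.
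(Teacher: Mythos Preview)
Your proof is correct. The forward implication (positive definiteness of $e^{cF}$ for all $c>0$ implies conditional positive definiteness of $F$) is identical to the paper's: subtract the constant $1$ using the mean-zero condition, divide by $c$, and let $c\to 0$.

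For the reverse implication your route differs from the paper's. The paper simply asserts that the result ``follows from expanding the exponential and using that the space of conditional positive definite functions is closed under addition, multiplication by nonnegative reals and multiplication.'' You instead run the classical Schoenberg argument: pass to the kernel $\tilde F(s,t) = F(s-t) - F(s-t_0) - F(t_0-t) + F(0)$, use the padding $\xi_0 = -\sum_{i\geq 1}\xi_i$ to show that $\tilde F$ is an \emph{unconditionally} positive definite kernel, exponentiate via the Schur product theorem, and undo the modification with the substitution $\eta_i = \xi_i e^{cF(t_i - t_0)}$. This is fully self-contained. By contrast, the paper's one-line argument is problematic as written: conditionally positive definite functions are \emph{not} closed under pointwise multiplication (for instance $F(t)=\cos t - 2$ is conditionally positive definite on $\mathbb S$, but $F^2(t)=\tfrac92 - 4\cos t + \tfrac12\cos 2t$ has a negative Fourier coefficient at the first mode), so expanding the exponential does not directly yield the conclusion without first upgrading $F$ to a genuinely positive definite function plus a constant, as in the subsequent lemma of the appendix. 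Your approach sidesteps this issue entirely.
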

\begin{proof}
	Assume that $e^{cF}$ is p.d. for each $c > 0$. Then
	\[
		\frac{1}{c} \sum_{i, j} \xi_i\xi_j (e^{cF(t_i - t_j)} - 1) = \frac{1}{c} \sum_{i, j} \xi_i\xi_j e^{cF(t_i - t_j)}  \geq 0, 
	\]
	and taking $c \to 0$ shows one implication, since the derivative at zero of $e^{cF}$ is $F$. 
	The other implication follows from expanding the exponential and using that the space of conditional positive definite functions is closed under addition, 
	multiplication by nonnegative reals and multiplication. 
\end{proof}

Without proof, we will also state the following result. 

\begin{lemma} \label{L: cond PD -> PD}
	If $F:\mathbb{S} \to \RR$ is conditionally positive definite, then there exists a positive definite function $\varphi:\mathbb{S} \to \RR$ and a constant $c$ such that $F = \varphi - c$. 
\end{lemma}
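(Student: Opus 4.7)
The plan is to extract the decomposition from the preceding lemma by reading off the Fourier coefficients of $F$. Writing formally $F(t) = \hat F(0) + \sum_{n \neq 0} \hat F(n) e^{int}$ (with $\hat F(-n) = \hat F(n) \in \RR$ by symmetry and reality), the natural candidate is $c := -\hat F(0)$ and $\varphi := F + c$, so the task reduces to showing that $\hat F(n) \geq 0$ for every $n \neq 0$; once this holds, $\varphi$ has non-negative Fourier coefficients and is positive definite in the paper's Bochner sense, and $F = \varphi - c$ tautologically.

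To establish this sign on the non-zero frequencies, I would invoke the previous lemma: $e^{sF}$ is positive definite for every $s > 0$, i.e.\ $\widehat{e^{sF}}(n) \geq 0$ for all $n \in \ZZ$ and all $s > 0$. Assuming $F$ continuous (which, since $\mathbb S$ is compact, also gives boundedness), the family $s \mapsto e^{sF}$ is locally uniformly bounded together with its $s$-derivative $F e^{sF}$, so one can differentiate under the integral defining the Fourier coefficient:
\[
\left.\frac{d}{ds} \widehat{e^{sF}}(n)\right|_{s = 0} = \hat F(n).
\]
At $s = 0$ we have $e^{0 \cdot F} \equiv 1$, whose Fourier coefficient is $1$ at $n = 0$ and $0$ otherwise. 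Therefore, for every $n \neq 0$, the function $s \mapsto \widehat{e^{sF}}(n)$ starts at $0$ and is non-negative on $[0, \infty)$, so its right derivative at zero satisfies $\hat F(n) \geq 0$.

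Combining the two paragraphs: set $c = -\hat F(0)$ and $\varphi = F + c$; the Fourier coefficients of $\varphi$ are $\hat F(n) \geq 0$ for $n \neq 0$ and $\hat F(0) + c = 0$ for $n = 0$, hence $\varphi$ is positive definite and $F = \varphi - c$. The only mildly delicate step is the interchange of derivative and Fourier integral at $s = 0$, which is the main obstacle only in the sense that it requires a minimal regularity assumption on $F$ (continuity is more than enough); this is essentially the same regularity used implicitly in the statement and proof of the preceding lemma, so the argument is self-contained within the setting of the appendix.
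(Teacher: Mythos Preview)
Your argument is correct. The paper does not actually prove this lemma; it simply cites an external reference (Corollary~2.10.3 in \cite{BdlHV}). Your route is genuinely different and more self-contained: you feed the forward direction of the preceding lemma (conditionally positive definite $\Rightarrow$ $e^{sF}$ positive definite for all $s>0$) back into the Fourier-coefficient characterisation, and extract $\hat F(n)\geq 0$ for $n\neq 0$ from the one-sided derivative of $s\mapsto \widehat{e^{sF}}(n)$ at $s=0$. This is an elegant use of the equivalence and keeps the appendix internally closed, whereas the paper's citation imports a general result about conditionally negative-definite kernels on groups. The only hypothesis you add is continuity of $F$, which is harmless in context since the potentials $\mathcal U$ to which the lemma is applied are $C^2$ by Definition~\ref{def:potential}.
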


\begin{proof}
	See e.g. Corollary 2.10.3 in \cite{BdlHV}. 
\end{proof}

Let us apply this to the potentials $\c{V}$ with dual potential $\c{U}$. Suppose that $\c{V}$ is divisible, i.e. that there exists a potential $\c{V}^{1}:\ZZ \to \RR$ such that
	\[
		e^{-\c{V}^1} * e^{-\c{V}^1} = e^{-\c{V}}. 
	\]
	This means that the Fourier transform $g$ of $e^{-\c{V}^1}$ satisfies $g^2 = e^{-\mathcal U}$. Assuming that $g$ is nonnegative, this implies in particular that the dual potential $\c{U}^{1}$ corresponding to $\c{V}^{1}$ equals $-\c{U}/2$. Moreover, $g = e^{-\c{U}/2}$ is positive definite (because its Fourier transform is non-negative). 
	
	Under the assumption that $e^{-\c{V}}$ is infinitely divisible, it is true that $\c{V}^1$ is also infinitely divisible. 
	Therefore, the $g$ is nowhere zero, which implies that its sign is fixed. However, $g(0) = 1$ so that $g > 0$ everywhere. 
	This implies in particular that $g = e^{-c \c{U}}$ must be positive definite for each $c > 0$ and hence we deduce that $-\c{U}$ is conditionally positive definite. 
	Lemma \ref{L: cond PD -> PD} then implies that we can take $-\c{U}$ to be positive definite since adding constants does not change the Gibbs measure.

\bibliography{PosDef}

\end{document}